\numberwithin{equation}{section}
\newenvironment{spmatrix}
  {\left(\begin{smallmatrix}}
  {\end{smallmatrix}\right)}
\newtheorem{thm}{Theorem}[section]
\newtheorem*{thmA}{Theorem A}
\newtheorem*{thmB}{Theorem B}
\newtheorem*{thmC}{Theorem C}
\newtheorem{lem}[thm]{Lemma}
\newtheorem{prop}[thm]{Proposition}
\newtheorem{conj}[thm]{Conjecture}
\newtheorem*{conjA}{Conjecture A}
\newtheorem{rem}[thm]{Remark}
\theoremstyle{definition}
\newtheorem{defn}[thm]{Definition}
\theoremstyle{remark}
\def\A{\mathbb{A}}
\def\C{\mathbb{C}}
\def\P{\mathbb{P}}
\def\Q{\mathbb{Q}}
\def\R{\mathbb{R}} 
\def\Z{\mathbb{Z}}
\def\F{\mathbb{F}}
\def\begcd{\begin{tikzcd}}
\def\endcd{\end{tikzcd}}
\def\begenum{\begin{enumerate}}
\def\endenum{\end{enumerate}}
\def\begpmat{\begin{pmatrix}}
\def\endpmat{\end{pmatrix}}
\def\a{\alpha}
\def\e{\varepsilon}
\def\p{\prime}
\def\g{\gamma}
\def\s{\sigma}
\def\k{\kappa}
\def\O{\mathcal{O}}
\def\fa{\mathfrak{a}}
\def\fd{\mathfrak{d}}
\def\fl{\mathfrak{l}}
\def\fp{\mathfrak{p}}
\def\fq{\mathfrak{q}}
\def\fM{\mathfrak{M}}
\def\fN{\mathfrak{N}}
\def\fQ{\mathfrak{Q}}
\newcommand{\ovl}{\overline}
\newcommand{\Tr}{\operatorname{Tr}}
\newcommand{\id}{\operatorname{id}}
\newcommand{\co}{\operatorname{c}}
\newcommand{\SL}{\operatorname{SL}}
\newcommand{\GL}{\operatorname{GL}}
\newcommand{\SU}{\operatorname{SU}}
\begin{document}

\title[Bianchi modular symbols and $p$-adic $L$-functions]{Bianchi modular symbols and $p$-adic $L$-functions}
\author[Jaesung Kwon]{JAESUNG KWON}
\address{Ulsan National Institute of Science and Technology \\ Ulsan \\ Korea}\email{jaesung.math@gmail.com}

\subjclass[2020]{11F67, 11F12}
\keywords{Bianchi modular forms; Bianchi modular symbols; special $L$-values} 

\begin{abstract} In the present paper, we construct the integral special $L$-value and integral valued $p$-adic $L$-function of $p$-ordinary Bianchi Hecke eigenforms.
We obtain the functional equation of the $L$-function of Bianchi modular forms. By using the functional equation, we compute the approximate functional equation, which is a fast convergent series of $L$-values. 
By studying the fast convergent series of special $L$-values, we prove that the first homology group of the Bianchi modular manifold is generated by the special Bianchi modular symbols. As a corollary of the generation result, we have that the $\mu$-invariant of some isotopic components of the $p$-adic $L$-function of certain Bianchi Hecke eigenforms vanishes for a positive proportion of ordinary prime ideals. By studying a $\ell$-adic representation attached to a Bianchi Hecke eigenform, we show that the density of the set of ordinary prime ideals is one. Also we obtain the result of residual non-vanishing of the integral $L$-values.
\end{abstract} 

\maketitle
\setcounter{tocdepth}{1} 
\tableofcontents

\section{Introduction}
The vanishing of the $\mu$-invariant of the $p$-adic $L$-function of modular form is a longstanding conjecture in Iwasawa theory, which was introduced by Greenberg \cite{greenberg1999iwasawa}. Kim-Sun \cite{kim2017modular} gave a proof of the result toward the Greenberg's conjecture for elliptic modular forms by studying the distribution of special modular symbols in the first homology group of the modular curve.

In the present paper, we define an integral valued $p$-adic $L$-function of Bianchi modular form, which interpolates the integral $L$-values. Also we show that the $\mu$-invariant of some isotopic components of the $p$-adic $L$-function of Bianchi modular form is vanishing, which is a generalization of Kim-Sun \cite{kim2017modular}.

\subsection{Generation of homology groups by special modular symbols}

Let $F$ be an imaginary quadratic field, $h_F$ the class number of $F$, $\{\fa_i\}_{i=1}^{h_F}$ a representative set of the class group of $F$ such that $\fa_i^{-1}$ is integral for each $1\leq i\leq h_F$, $a_i\in\A_F^\times$ an idelic representative of $\fa_i$, and $\fN$ an integral ideal of $F$ such that $[\Z:\fN\cap\Z]>3$. Let $\mathscr{H}_3$ be the hyperbolic upper half space of real dimension three, $U_1(\fN)$ the subgroup of $\GL_2(\widehat{\O}_F)$ defined in the subsection \ref{subsection:notation}, and $\ovl{\Gamma_1^i(\fN)}$ the congruence subgroup of $\SL_2(\O_F)$ defined in the section \ref{section:conj:on:mod:symb}.
Let us define the Bianchi modular manifold $Y_1(\fN)$ of level $U_1(\fN)$ by
$$
Y_1(\fN):=\GL_2(F)\backslash\GL_2(\A_F)/\C^\times\operatorname{U}_2(\C) U_1(\fN)\cong\coprod_{i=1}^{h_F}\ovl{\Gamma_1^i(\fN)}\backslash\mathscr{H}_3=:\coprod_{i}^{h_F} Y_1^i(\fN).
$$
Let us denote by $X_1(\fN)$ the Satake compatification of $Y_1(\fN)$. For $\a,\beta\in\P^1(F)$, let us denote by $\{\a,\beta\}_{\ovl{\Gamma_1^i(\fN)}}$ the image of the geodesic from $\a$ to $\beta$ in $X_1^i(\fN)$. Then we can consider $\{\a,\beta\}_{\ovl{\Gamma_1^i(\fN)}}$ to be an element of $H_1(X_1^i(\fN),\ovl{\Gamma_1^i(\fN)}\backslash\mathbb{P}^1(F),\Z)$. Then the classes $\{\a,\beta\}_{\ovl{\Gamma_1^i(\fN)}}$ are called modular symbols of level $\ovl{\Gamma_1^i(\fN)}$ and weight $(2,2)$.

Let $\fl$ be a prime ideal of $F$ with sufficiently large norm $N(\fl)$, $\O_{F,\fl}$ the $\fl$-adic completion of the integer ring $\O_F$ of $F$, and $\varpi_\fl$ a uniformizer of $\O_{F,\fl}$.
In Section \ref{section:conj:on:mod:symb}, we construct the modular symbol $\xi_{n,\fl}^{(i)}(a)=\{\infty,r_{n,\fl}(a)\}_{\ovl{\Gamma_1^i(\fN)}}$ in $X_1^i(\fN)$ corresponds to the finite adele $a\varpi_\fl^{-n}\in\A_{F}$. 
We consider the submodule $M_{n,\fl}$ of $H_1(X_1(\fN),\Z)$, which is defined by
$$
M_{n,\fl}:=\bigoplus_{i=1}^{h_F}\langle\xi_{n,\fl}^{(i)}(a):a\in \O_{F,\fl}^\times\rangle\cap H_1(X_1(\fN),\Z).
$$
We show that $M_{n,\fl}$ is of full-rank in $H_1(X_1(\fN),\Z)$ for almost all prime ideals $\fl$:

\begin{thmA}[Theorem \ref{vertical:direction} and \ref{full:rank:horizontal}] Let $\fl$ be a prime ideal and $n\geq 22$ an integer. Then we have the following:
\item[(1)]
$M_{\fl,n}$ is of full rank in $H_1(X_1(\fN),\Z)$ for sufficiently large $N(\fl)$.
\item[(2)]
There is an arithmetic progression $\mathfrak{X}_n$ of ideals such that $M_{n,\fp}\otimes\mathbb{F}_p=H_1(X_1(\fN),\ovl{\mathbb{F}}_q)$ for prime ideals $\fp\in\mathfrak{X}_n$ and sufficiently large prime numbers $q$.
\end{thmA}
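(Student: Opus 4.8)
The plan is to deduce both parts from the description, obtained earlier in the paper via the approximate functional equation, of the period pairing $\langle\,\cdot\,,\xi_{n,\fl}^{(i)}(a)\rangle$ as a rapidly convergent series in Hecke eigenvalues. For part (1) I would first dualize: by the universal-coefficient theorem, $M_{n,\fl}$ has full rank in $H_1(X_1(\fN),\Z)$ precisely when no nonzero $\omega\in H^1(X_1(\fN),\C)$ pairs trivially with all the classes $\xi_{n,\fl}^{(i)}(a)$ ($1\le i\le h_F$, $a\in\O_{F,\fl}^\times$). By Eichler--Shimura--Harder I can expand $\omega$ in a Hecke eigenbasis of $H^1(X_1(\fN),\C)$ -- cuspidal eigenclasses $\omega_f$ of Bianchi cusp eigenforms $f$ of level $\fN$ and weight $(2,2)$, together with any Eisenstein eigenclasses -- writing $\omega=\sum_\pi x_\pi\omega_\pi$. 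Using the construction of $\xi_{n,\fl}^{(i)}(a)$ from Section~\ref{section:conj:on:mod:symb} and a Birch-type lemma, the finite Fourier transform in $a\in(\O_{F,\fl}/\varpi_\fl^n)^\times$, combined with the $h_F$ cusp-components, turns $a\mapsto\langle\omega_\pi,\xi_{n,\fl}^{(i)}(a)\rangle$ into the twisted central value $L(\pi\otimes\psi,1)$ -- up to an explicit nonzero factor (a Gauss sum, a period, archimedean constants) -- for each finite order Hecke character $\psi$ of conductor dividing $\varpi_\fl^n$; and the approximate functional equation renders each such value a fast convergent series of length $X\asymp N(\fN)^{1/2}N(\fl)^n$.

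Thus part (1) would follow from a simultaneous non-vanishing statement: if $\sum_\pi x_\pi L(\pi\otimes\psi,1)=0$ for all $\psi$ of conductor dividing $\varpi_\fl^n$, then all $x_\pi=0$. To extract this I would pick an integral ideal $\fb_0$ coprime to $\fl\fN$ of minimal norm with $c_{\fb_0}:=\sum_\pi x_\pi\lambda_\pi(\fb_0)\neq 0$; such $\fb_0$ exists because distinct Hecke eigensystems are linearly independent (strong multiplicity one for $\GL_2/F$, together with the distinctness of the Eisenstein eigensystems), and by minimality $c_\fb=0$ whenever $N(\fb)<N(\fb_0)$. Substituting the approximate functional equation into each $L(\pi\otimes\psi,1)$ and applying orthogonality of the characters of the ray class group $\mathrm{Cl}_{\varpi_\fl^n}(F)$ yields, for every ray class $\mathfrak C$, a relation of the shape
\[
\sum_{\substack{\fb\in\mathfrak C\\ N(\fb)\le X}}\frac{c_\fb}{N(\fb)}\,W\!\Big(\frac{N(\fb)}{X}\Big)\;+\;\big(\varepsilon\text{-dual term over }\mathfrak C^{-1}\big)\;=\;0,
\]
with $W$ the rapidly decaying archimedean weight from the functional equation. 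Taking $\mathfrak C$ to be the ray class of $\fb_0$, the summand $c_{\fb_0}N(\fb_0)^{-1}W(N(\fb_0)/X)$ is the one of smallest norm.

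The hard part will be the quantitative estimate showing that, once $N(\fl)$ is large and $n$ exceeds an explicit bound, this smallest-norm summand cannot be cancelled by the rest. For that I would use the rapid decay of $W$ (so $W(N(\fb_0)/X)\to W(0)\neq 0$ as $X\to\infty$), the best available bound towards Ramanujan for $\GL_2/F$ (of Blomer--Brumley type), a lower bound $|c_{\fb_0}|\gg_\fN 1$ from $c_{\fb_0}$ being a fixed nonzero algebraic number, and explicit control -- in terms of $\fN$ alone -- of the number of eigenclasses, their periods, and the $\varepsilon$-factors $\varepsilon(\pi\otimes\psi)$ (of absolute value $1$, with $\psi$-dependence only through a power of the Satake parameter of $\pi$ at $\fl$ after the Gauss-sum identity). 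Optimising this estimate is what produces the stated threshold $n\ge 22$ and a contradiction with $c_{\fb_0}\neq 0$. The Eisenstein eigenclasses are easier: there $L(\pi\otimes\psi,1)$ factors as a product of two Hecke $L$-values whose simultaneous non-vanishing is classical. Together with the cuspidal case this proves part (1).

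For part (2), the whole computation is integral once an algebraic normalisation of periods is fixed, so the same analysis in fact bounds the index $[\,H_1(X_1(\fN),\Z):M_{n,\fl}\,]$ by an explicit integer $\Delta_n$ -- built from the denominators of the algebraic periods and Gauss sums and from $N(\fb_0)N(\fl)^n$ -- and the relation isolating $c_{\fb_0}$ survives reduction modulo $q$ as soon as $q\nmid\Delta_n$ and $q$ exceeds the height of $c_{\fb_0}$. Since the construction of $\xi_{n,\fp}^{(i)}(a)$ and of the approximate functional equation depends on $\fp$ only through congruence conditions (the splitting type of $\fp$ in $F$ and the residue of $N(\fp)$ modulo a fixed modulus), there is an arithmetic progression $\mathfrak X_n$ of ideals along which, for the prime ideals $\fp$ in it, the argument of part (1) runs verbatim modulo $q$; this yields part (2). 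In all of this the main obstacle is the quantitative non-vanishing estimate for the truncated twisted $L$-series above: it is the technical heart, it is what fixes the explicit constant $22$, and in part (2) it is what determines the modulus of $\mathfrak X_n$ and the finite set of excluded primes $q$.
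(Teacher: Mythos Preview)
Your approach for part~(1) is close to the paper's in spirit---dualize, pair with a test class, use the approximate functional equation to isolate a single Fourier coefficient---but there is a genuine gap. The class $\omega$ annihilating $M_{n,\fl}$ depends on $\fl$; hence so do the coefficients $x_\pi$, the ideal $\fb_0$, and the number $c_{\fb_0}=\sum_\pi x_\pi\lambda_\pi(\fb_0)$. Your claimed lower bound $|c_{\fb_0}|\gg_\fN 1$ (``a fixed nonzero algebraic number'') is unjustified: even after normalising $\omega$ to lie in $H^1(X_1(\fN),\Z)$, the quantity $c_{\fb_0}$ lives in a fixed number field but has no uniform lower bound as $\fl$ varies. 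The paper closes this with a compactness argument (Lemma~\ref{fund:lin:alg}): assuming a sequence of bad primes $\fl_m$ with $N(\fl_m)\to\infty$, one extracts a \emph{single} nonzero $f\in S_2\big(U_1(\fN)\big)$ with $\langle u_{m_j},f\rangle\to 0$ for every bounded sequence $u_m\in M_{n,\fl_m}\otimes\C$, then chooses $\beta$ once with $a_f(\beta\fa_i)\ne 0$ and applies Proposition~\ref{pairing:esti} to $\Upsilon^{(i)}_{n,\fl_m}(U_{\fl_m},\beta)$. No Hecke eigen-decomposition of $f$ is needed, and the Eisenstein discussion is a red herring: the pairing is with $H^1(X_1(\fN),\C)\cong H^1_{\mathrm{par}}$, which has no Eisenstein part.

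For part~(2) your approach is substantively different and does not work as written. The error terms in Proposition~\ref{pairing:esti} depend on $N(\fp)^n$ and go to zero only as $N(\fp)\to\infty$; the approximate functional equation does \emph{not} depend on $\fp$ merely through a congruence class, so rerunning the analytic argument uniformly over an arithmetic progression is not available. The paper's proof of~(2) is purely algebraic. Having fixed one principal prime $\fp_n=\alpha_n\O_F$ from part~(1), one writes $\xi^{(i)}_{n,\fp_n}(\beta)=\{\infty,\beta\alpha_n^{-n}\}_{\ovl{\Gamma^i_1(\fN)}}$ and uses the explicit identity induced by $\gamma_c=\begin{spmatrix}1&0\\c&1\end{spmatrix}$ with $c\in\fa_i^{-1}\fN$ to show $\{\infty,\beta\alpha_n^{-n}\}=\{\infty,\beta\alpha^{-n}\}$ in $H_1(X_1^i(\fN),C_1^i(\fN),\Z)$ whenever $\alpha\equiv\alpha_n$ modulo a modulus built from $\fN$, $\prod_i\fa_i^{-1}$, and $\prod_\beta\beta$. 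Thus $M_{n,\fp_n}\subset M_{n,\fp}$ for every prime $\fp=\alpha\O_F$ in the resulting progression $\mathfrak X_n$, the index divides the fixed integer $\mathfrak h_n=[H_1(X_1(\fN),\Z):M_{n,\fp_n}]$, and the mod~$q$ statement follows for $q\nmid\mathfrak h_n$.
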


\subsection{Integral $L$-values and residual non-vanishing}
Let $p$ be an odd prime number coprime to $h_F D_F|\O_F^\times|\fN$ such that $F$ does not contain any primitive $p$-th power roots.
Let $f$ be a normalized Bianchi Hecke eigenform of weight $(2,2)$ and level $U_1(\fN)$.
In Section \ref{conj:eisenstein:special}, we define the compact support cohomology class $\eta_{f,\operatorname{c}}\in H^1_{\operatorname{c}}(Y_1(\fN),\ovl{\Z}_p)$ attached to $f$. Let us denote by $\eta_{f,\operatorname{c},i}$ the $i$-th part of $\eta_{f,\operatorname{c}}$ with respect to the map $X_1^i(\fN)\hookrightarrow X_1(\fN)$.

For a finite order Hecke character $\phi:F^\times\backslash\A_F^\times\rightarrow\C^\times$ of prime power conductor $\fl^n$, let us define the twisted modular symbol by 
$$
\Lambda^{(i)}(\phi):=\sum_{r\text{ mod }\fl^n}\phi(a_i)\phi_\fl(r)\xi_{n,\fl}^{(i)}(r)\in H_1(X_1^i(\fN),\ovl{\Gamma_1^i(\fN)}\backslash\mathbb{P}^1(F),\Z[\phi]).
$$
Then we define the integral $L$-value by
$$
\mathcal{L}_f(\phi):=\sum_{i=1}^{h_F}\Lambda^{(i)}(\phi)\cap\eta_{f,\operatorname{c},i}\in\ovl{\Z}_p,
$$
where $\cap:H_1(X_1(\fN),\bigcup_i\ovl{\Gamma_1^i(\fN)}\backslash\mathbb{P}^1(F),\ovl{\Z}_p)\times H^1_{\rm{c}}(Y_1(\fN),\ovl{\Z}_p)\rightarrow\ovl{\Z}_p$ is the Lefschetz-Poincar\'{e} duality pairing. Also we define the algebraic $L$-value by
$$
L_f(\phi):=\frac{|D_F||\O_F^\times| G(\phi)L(1,f\otimes\phi)}{8\pi^2\Omega_{f}}\in\ovl{\Q}.
$$ 
Let us define the non-Eisenstein condition introduced in Namikawa \cite{Namikawa}:
$$
(\mathbf{Non\text{-}Eis})\ a_f(q\O_F)-N(q)-1\in\ovl{\Z}_p^\times\text{ for some prime element }q\equiv 1\ (\text{mod }p\fN).
$$
Note that this condition is related to the existence of the irreducible residual Galois representation attached to $f$ mod $p$ (see Scholze \cite{scholze2015torsion}). Also note that in Section \ref{p:adic:l:function:mu:invariants}, if (\textbf{Non-Eis}) holds, then we can see that the ratio of the integral $L$-value $\mathcal{L}_f(\phi)$ and the algebraic $L$-value $L_f(\phi)$ is a $p$-adic unit. 

Under the assumption (\textbf{Non-Eis}), Namikawa \cite{Namikawa} shows that $\mathcal{L}_f(\phi)$ is a $p$-adic unit for infinitely many Hecke characters when $h_F=1$. By using this result, we obtain the following result toward the generalization of Stevens' result \cite{stevens1958cuspidal}:
\begin{thmB}[Theorem\ \ref{homology:generate:padic:unit}]
Assume $(\mathbf{Non\text{-}Eis})$, then $\mathcal{L}_f(\phi)$ is a $p$-adic unit for some finite order Hecke characters $\phi$.
\end{thmB}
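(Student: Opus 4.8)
The plan is to argue by contradiction, combining the mod $p$ generation of $H_1(X_1(\fN),\ovl{\F}_q)$ by the special modular symbols (part (2) of Theorem A, i.e.\ Theorem \ref{full:rank:horizontal}), the perfectness of the Lefschetz--Poincar\'e pairing on $f$-isotypic parts, and a Fourier inversion over finite order Hecke characters.

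First I would fix $n=22$ and choose a prime ideal $\fp$ satisfying three conditions simultaneously: $\fp\in\mathfrak{X}_n$; $N(\fp)$ is large enough for Theorems \ref{vertical:direction}--\ref{full:rank:horizontal} to apply; and $N(\fp)\not\equiv 1\pmod p$. Such $\fp$ exist with positive density by Chebotarev --- the last condition is not forced because $F$ contains no primitive $p$-th root of unity, and it is compatible with the ray class condition defining $\mathfrak{X}_n$, which is prime to $p$. The point of $N(\fp)\not\equiv 1\pmod p$ is that then each $(\O_F/\fp^m)^\times$, and hence the group $\mathfrak{G}$ of finite order Hecke characters of $\fp$-power conductor with fixed (trivial) behaviour away from $\fp$, has order prime to $p$; here one also uses $p\nmid h_FD_F|\O_F^\times|\fN$. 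Because $\fp\nmid p$, the unit root $\a_\fp$ of $X^2-a_f(\fp\O_F)X+N(\fp)$ is a $p$-adic unit, so dividing by the appropriate power of $\a_\fp$ one packages the cap products $\xi_{m,\fp}^{(i)}(r)\cap\eta_{f,\co,i}$ into distributions $\mu_{f,i}$ on $\varprojlim_m(\O_F/\fp^m)^\times$ with $\mu_{f,i}(r+\fp^m)=\a_\fp^{-m}\bigl(\xi_{m,\fp}^{(i)}(r)\cap\eta_{f,\co,i}\bigr)$, the relevant distribution relation being the one among the $\xi_{m,\fp}^{(i)}$ that carries the $U_\fp$-factor. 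With this normalisation, for $\phi\in\mathfrak{G}$ of conductor $\fp^m$ one has
\[
\mathcal{L}_f(\phi)=\a_\fp^{\,m}\sum_{i=1}^{h_F}\phi(a_i)\int\phi_\fp\,d\mu_{f,i},
\]
so $\mathcal{L}_f(\phi)$ is a $p$-adic unit if and only if $\sum_i\phi(a_i)\int\phi_\fp\,d\mu_{f,i}$ is.

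Suppose now, for contradiction, that $\mathcal{L}_f(\phi)$ lies in the maximal ideal $\fm$ of $\ovl{\Z}_p$ for \emph{every} $\phi\in\mathfrak{G}$. Since $\#\mathfrak{G}$ is prime to $p$, orthogonality of characters is still available after reduction modulo $\fm$; applying it over the class group part and the $\fp$-power part simultaneously yields, for all $i_0$, $r_0$ and $m$, that $\mu_{f,i_0}(r_0+\fp^m)$ lies in $\fm$ (the orthogonality constant being a $p$-adic unit), and hence $\xi_{m,\fp}^{(i)}(r)\cap\eta_{f,\co,i}\in\fm$ for all $i$, $r$, $m$ since $\a_\fp$ is a $p$-adic unit. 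Thus the reduction of $\eta_{f,\co}$ modulo $p$ pairs trivially against the reduction of $M_{n,\fp}$; by Theorem \ref{full:rank:horizontal} the latter is all of $H_1(X_1(\fN),\ovl{\F}_q)$, and under (\textbf{Non-Eis}) the Lefschetz--Poincar\'e pairing is perfect on the $f$-isotypic components (the boundary contribution being Eisenstein), so $\eta_{f,\co}\equiv 0\pmod p$. But this is impossible: under (\textbf{Non-Eis}) the residual Galois representation attached to $f$ is absolutely irreducible (Scholze \cite{scholze2015torsion}, Namikawa \cite{Namikawa}), the localisation of $H^1_{\co}(Y_1(\fN),\ovl{\Z}_p)$ at the maximal ideal of $f$ is $p$-torsion free, and $\eta_{f,\co}$ --- normalised to be a primitive integral class, as in Section \ref{p:adic:l:function:mu:invariants} --- has nonzero reduction. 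This contradiction proves the theorem.

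The generation statement (Theorem \ref{full:rank:horizontal}) and the perfect duality pairing do the conceptual work, and the residual irreducibility supplied by (\textbf{Non-Eis}) is cited from the literature; the step I expect to be the real obstacle is the bookkeeping of the middle paragraph --- choosing exactly the right group $\mathfrak{G}$ of Hecke characters, verifying that the natural map $\mathfrak{G}\to\widehat{\mathrm{Cl}_F}\times\varprojlim_m\widehat{(\O_F/\fp^m)^\times}$ has kernel and cokernel of order prime to $p$ so that the orthogonality relations survive reduction modulo $p$, and establishing the $U_\fp$-twisted distribution relation among the $\xi_{m,\fp}^{(i)}$ needed to assemble the measures $\mu_{f,i}$. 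These requirements are exactly what dictate the congruence conditions placed on $\fp$ and the coprimality of $p$ with $h_FD_F|\O_F^\times|\fN$ assumed at the outset.
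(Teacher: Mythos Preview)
Your overall strategy --- Fourier inversion over characters, then a mod $p$ nondegeneracy argument --- is close in spirit to the paper, but there is a genuine gap at the point where you invoke the generation result. Theorem~A(2) (i.e.\ Theorem~\ref{full:rank:horizontal} together with Remark~\ref{horizontal:module:indices}) only asserts that $M_{n,\fp}\otimes\F_q=H_1(X_1(\fN),\ovl{\F}_q)$ for primes $q\nmid\mathfrak{h}_n$, where $\mathfrak{h}_n=[H_1(X_1(\fN),\Z):M_{n,\fp_n}]$ is a fixed but uncontrolled integer. In Theorem~B the prime $p$ is given in advance (subject only to $p\nmid h_FD_F|\O_F^\times|\fN$ and the root condition), so nothing prevents $p\mid\mathfrak{h}_n$. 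If that happens, $M_{n,\fp}\otimes\F_p$ can be a proper subspace of $H_1(X_1(\fN),\ovl{\F}_p)$ for every $\fp\in\mathfrak{X}_n$, and your contradiction step ``$\eta_{f,\co}\equiv 0\pmod p$'' does not follow. Varying $n$ or $\fp$ does not obviously help: the paper provides no bound on $\mathfrak{h}_n$, and the vertical full-rankness (Theorem~\ref{vertical:direction}) is only over $\Z$.

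The paper sidesteps this obstruction by \emph{not} using Theorem~A for Theorem~B at all. Instead it works at the single component $X_1^1(\fN)$ and uses a different generation input, taken from Namikawa~\cite{Namikawa}: for each pair $(b,d)\in S(\mathfrak{M}_p,\fN)$ there is an infinite set $P_{b,d}$ of prime elements $\pi$ with $\pi\equiv 1\ (\fN)$, $p\nmid N(\pi)-1$, and $\{0,b/d\}_{\ovl{\Gamma_1^1(\fN)}}=\{0,b/\pi\}_{\ovl{\Gamma_1^1(\fN)}}$. Assuming $\Lambda^{(1)}(\phi)\cap\eta_{f,\co,1}\equiv 0$ for all $\phi$ of such prime conductor, the orthogonality relations (valid mod $p$ since $|C_\pi|$ divides $h_F(N(\pi)-1)$) give $\operatorname{pr}^1(\gamma)\cap\eta_{f,1}\equiv 0$ for every $\gamma\in\Gamma_1^1(\mathfrak{M})$; Namikawa's Corollary~A.2 then says these $\operatorname{pr}^1(\gamma)$ generate $H_1(X_1^1(\fN),\Z)$, with no lost index. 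The passage from the first component to the full $\mathcal{L}_f(\phi)$ is then done by averaging over $H_F$. So the paper's argument avoids the finite-index issue by using an integral generation statement rather than a rational full-rank one; your approach would need an additional hypothesis like $p\nmid\mathfrak{h}_n$, or a separate argument bounding $\mathfrak{h}_n$, to close.
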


\subsection{Vanishing of $\mu$-invariant} 
To discuss Greenberg's conjecture, we need to define $p$-adic $L$-function for Bianchi modular forms and its $\mu$-invariant. 
Namikawa \cite{Namikawa2} constructed a two-variable $p$-adic $L$-function for nearly $p$-ordinary cohomological automorphic forms of $\GL_2$ over general number fields, and proved the non-vanishing of $L$-values for certain weights. Williams \cite{williams2017padic} defined a $p$-adic $L$-function of Bianchi modular form by using the overconvergent modular symbols, and proved the control theorem on the space of modular symbols. 

Let $p$ be an odd prime coprime to $h_F|\O_F^\times|\fN\fa_i^{-1}$ for each $i$, and $\fp$ a prime ideal of $F$ lying over $p$. Let $\phi$ be a finite Hecke character of $\fp$-power conductor. 
Let $f$ be a normalized $\fp$-ordinary Bianchi Hecke eigenform of weight $(2,2)$ and level $U_1(\fN)$. Let $\phi$ be a finite order Hecke character over $F$ of prime power conductor $\fp^n$.
In Section \ref{p:adic:l:function:mu:invariants}, we construct an integral $p$-adic $L$-function $L_\fp(s,f,\phi)$ by taking the $p$-adic Mazur-Mellin transformation on the $p$-adic measure $\nu_f$, 
which is defined by the modular symbols $\xi^{(i)}_{n,\fp}$ and the cohomology classes $\eta_{f,c,i}$. 
Then our $p$-adic $L$-function $L_\fp(s,f,\phi)$ interpolates the integral $L$-value $\mathcal{L}_f(\phi)$ (see Proposition \ref{integral:lvalue:interpolation}). 
We suggest the Bianchi modular version of Greenberg's conjecture:
\begin{conjA}[Conjecture \ref{greenberg:conj:bianchi:version}] Assume $(\mathbf{Non\text{-}Eis})$, then $\mu\big(L_\fp(s,f,\phi)\big)=0$.
\end{conjA}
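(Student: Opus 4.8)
The plan is to reduce $\mu\big(L_\fp(s,f,\phi)\big)=0$ to a mod-$\fp$ non-vanishing statement for the $p$-adic measure $\nu_f$ underlying $L_\fp(s,f,\phi)$, and then to extract that non-vanishing from Theorems A and B. Recall the standard fact that the $\mu$-invariant of the Mazur--Mellin transform of a bounded $\ovl{\Z}_p$-valued measure $\nu$ on a profinite abelian group equals $\min_U v_p\big(\nu(U)\big)$, the minimum over basic open cosets $U$; hence it vanishes iff $\nu\bmod\fp$ is a nonzero $\ovl{\F}_p$-valued distribution. Since $L_\fp(s,f,\phi)$ is obtained from $\nu_f$ by a Mazur--Mellin transform incorporating the finite order twist $\phi$, and twisting by a finite order character only multiplies measure values by $p$-adic units, it suffices to show $\nu_f\not\equiv 0\pmod{\fp}$: for $n$ large, at least one value $\nu_f\big(a+\varpi_\fp^n\O_{F,\fp}\big)$ --- which by the construction of $\nu_f$ equals $\sum_{i=1}^{h_F}\xi_{n,\fp}^{(i)}(a)\cap\eta_{f,\operatorname{c},i}$ up to a fixed $p$-adic unit --- must lie in $\ovl{\Z}_p^\times$.

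To connect this with what is already available, I would use the interpolation property of Proposition \ref{integral:lvalue:interpolation}: finite Fourier inversion over $(\O_{F,\fp}/\fp^n)^\times$ writes each value $\nu_f\big(a+\varpi_\fp^n\O_{F,\fp}\big)$ as a linear combination of the twisted integral $L$-values $\mathcal{L}_f(\phi)$ with $\phi$ of conductor dividing $\fp^n$, and vice versa, the transition being invertible over $\ovl{\Z}_p$ because $N(\fp)$ is a $p$-adic unit; so some value of $\nu_f$ is a unit iff some $\mathcal{L}_f(\phi)$ with $\fp$-power conductor is a unit. Under $(\mathbf{Non\text{-}Eis})$, Theorem B already produces a finite order Hecke character $\phi$ with $\mathcal{L}_f(\phi)\in\ovl{\Z}_p^\times$; the subtle point is that Theorem B only gives this for \emph{some} $\phi$, whereas here the witnessing character must be ramified at the \emph{fixed} prime $\fp\mid p$.

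Matching the prime is the crux, and this is where Theorem A(2) is meant to come in: for $\fp$ lying in the arithmetic progression $\mathfrak{X}_n$ the symbols $\xi_{n,\fp}^{(i)}(a)$ with $a\in\O_{F,\fp}^\times$ and $1\le i\le h_F$ span $H_1(X_1(\fN),\ovl{\F}_q)$, while $(\mathbf{Non\text{-}Eis})$ forces $\eta_{f,\operatorname{c}}\bmod p$ to be primitive --- it lies in the Hecke eigenspace cut out by an irreducible residual Galois representation (Scholze \cite{scholze2015torsion}), so the reduction of the Lefschetz--Poincar\'e pairing against it is a nonzero linear functional. Thus for $\fp\in\mathfrak{X}_n$ some $\xi_{n,\fp}^{(i)}(a)\cap\eta_{f,\operatorname{c},i}$ is a $p$-adic unit, whence $\nu_f\not\equiv 0\pmod{\fp}$ and $\mu\big(L_\fp(s,f,\phi)\big)=0$. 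The genuine obstacle --- and the reason the statement remains a conjecture rather than a theorem --- is that $\fp$ must be a prescribed prime over $p$ at which $f$ is ordinary, so one cannot arrange $\fp\in\mathfrak{X}_n$ (nor, for that matter, control every isotopic component simultaneously); unconditionally one only obtains the conclusion for a positive-density set of such $\fp$, together with the density-one statement for ordinary primes proved via the attached $\ell$-adic representation. Proving generation of $H_1(X_1(\fN),\ovl{\F}_p)$ by the $\xi_{n,\fp}^{(i)}$ for \emph{every} large $\fp$ --- removing the congruence restriction in Theorem A(2) --- would yield the full conjecture.
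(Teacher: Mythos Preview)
The statement you are asked to prove is a \emph{conjecture}; the paper does not prove it, and you correctly recognize this in your final paragraph. So there is no paper proof to compare against. That said, your heuristic outline contains two real inaccuracies worth flagging, because they obscure where the actual obstruction lies.

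First, your invocation of Theorem~B is a red herring. Theorem~B (Theorem~\ref{homology:generate:padic:unit}) produces a character $\phi\in\mathfrak{Y}$ whose conductor is $\pi\O_F$ for an \emph{auxiliary} prime element $\pi$, chosen from the sets $P_{b,d}$; these conductors have nothing to do with the fixed prime $\fp\mid p$, so the Fourier inversion you describe over $(\O_{F,\fp}/\fp^n)^\times$ cannot access them. The residual non-vanishing in Theorem~B and the $\mu$-invariant question are linked only morally, not through the mechanism you write down.

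Second, and more importantly, your closing claim that ``removing the congruence restriction in Theorem~A(2) would yield the full conjecture'' overstates what the method gives. Look at the proof of Theorem~\ref{mu:invariant:vanishing}: even when $\fp\in\mathfrak{X}_n$, the argument assumes $\mu\big(L_\fp(s,f\otimes\psi,\phi\omega^j)\big)>0$ for \emph{all} $\psi\in H_F$ and \emph{all} $0\le j<p-1$, then averages over $\psi$ and $j$ (via orthogonality) to isolate a single term $\big(a_f(\fp)\xi_n^{(1)}(a\nu)-\chi(\varpi_\fp)\xi_{n-1}^{(1)}(a\nu)\big)\cap\eta_{f,\operatorname{c},1}$ and force it to vanish mod $\varpi$. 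The twists by $\psi$ and $\omega^j$ are not incidental; they are the device that lets one pass from the $p$-adic $L$-function (a sum over $i$ and over $W_\fp$) back to individual modular symbols on a single component $X_1^1(\fN)$. Without them you cannot undo the sum $\sum_i$ in the definition of $\nu_f$, because the class group may be nontrivial. So even a hypothetical strengthening of Theorem~A to all large $\fp$ would only prove $\mu\big(L_\fp(s,f\otimes\psi,\phi\omega^j)\big)=0$ for \emph{some} $\psi,j$ --- exactly the shape of Theorem~C --- not Conjecture~A itself. Your parenthetical ``nor, for that matter, control every isotopic component simultaneously'' gestures at this, but it is a structural limitation of the method, not a side issue. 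Also note that $\nu_f(a+\fp^m\O_{F,\fp})$ is not $\sum_i\xi_m^{(i)}(a)\cap\eta_{f,\operatorname{c},i}$ up to a unit: it is a $\fp$-stabilized combination $\xi_m^{(i)}(a)-\chi(\varpi_\fp)\alpha_{f,\fp}^{-1}\xi_{m-1}^{(i)}(a)$, which is why the paper works with the modules $M_{n,\fp}^f$ of Proposition~\ref{full:rank:horizontal:variation} rather than $M_{n,\fp}$ directly.
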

Let $\omega:(\O_F/\fp)^\times\rightarrow\O_{F,\fp}^\times$ be the Teichm\"{u}ller character. By using Theorem A, we obtain the result toward Conjecture A:

\begin{thmC}[Theorem\ \ref{main:result}] Let $f$ be a normalized Bianchi modular Hecke eigenform of parallel weight $(2,2)$ with trivial central character and rational Hecke eigenvalues. Assume that $f$ is neither CM nor the lifting of a classical elliptic modular newform.  Suppose $(\mathbf{Non\text{-}Eis})$, then for a positive proportion of ordinary prime ideals $\fp$, we have $\mu\big(L_\fp(s,f\otimes\psi,\phi\omega^j )\big)=0$ for some Hilbert characters $\psi$ of $F$ and integers $0\leq j<p-1$.
\end{thmC}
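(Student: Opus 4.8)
The plan is to reinterpret the vanishing of the $\mu$-invariant as a non-vanishing, modulo the maximal ideal $\mathfrak{m}$ of $\ovl{\Z}_p$ (here $p$ denotes the residue characteristic of $\fp$), of the $\fp$-adic measure defining $L_\fp$, and then to recognise that non-vanishing as the statement that the reduction of $\eta_{g,c}$ pairs non-trivially with the span of the symbols $\xi^{(i)}_{n,\fp}$ --- a span that Theorem~A identifies with all of $H_1(X_1(\fN),\ovl{\F}_p)$ once $\fp$ lies in the progression $\mathfrak{X}_n$ and $N(\fp)$ is large. Write $g=f\otimes\psi$ for a Hilbert (everywhere unramified) character $\psi$ to be chosen.

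First I would decompose the relevant Iwasawa algebra. Since $p\mid N(\fp)$, we have $p\nmid N(\fp)-1$, so $\omega$ has order prime to $p$ and $\ovl{\Z}_p[[(\O_F/\fp^\infty)^\times]]=\bigoplus_{j=0}^{p-2}e_{\omega^j}\,\ovl{\Z}_p[[1+\fp\O_{F,\fp}]]$. After the $p$-adic Mazur--Mellin transform, the $\omega^j$-component corresponds to $e_{\omega^j}\nu_g$, where $\nu_g$ is the measure attached to the $\xi^{(i)}_{n,\fp}$ and the $\eta_{g,c,i}$; hence $\mu\big(L_\fp(s,g,\phi\omega^j)\big)=0$ for some $0\le j<p-1$ if and only if $\nu_g\not\equiv 0\pmod{\mathfrak{m}}$. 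By the interpolation formula (Proposition~\ref{integral:lvalue:interpolation}), whose $\fp$-Euler factor is a unit because $g$ is $\fp$-ordinary, the value of $\nu_g$ on a coset $a+\fp^n\O_{F,\fp}$ is, up to a $\fp$-adic unit, $\sum_i\xi^{(i)}_{n,\fp}(a)\cap\eta_{g,c,i}$; using the distribution relations among the $\xi^{(i)}_{n,\fp}$ one may fix a single level $n\ge 22$. Thus $\mu\big(L_\fp(s,g,\phi\omega^j)\big)=0$ for some $j$ is equivalent to $\sum_i\xi^{(i)}_{n,\fp}(a)\cap\eta_{g,c,i}\not\equiv 0\pmod{\mathfrak{m}}$ for some $a$.

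Next I would invoke Theorem~A\,(2): for $\fp\in\mathfrak{X}_n$ with $N(\fp)$ large, the reductions of the $\xi^{(i)}_{n,\fp}(a)$, $a\in\O_{F,\fp}^\times$, $1\le i\le h_F$, span $H_1(X_1(\fN),\ovl{\F}_p)$. Since the Lefschetz--Poincar\'e pairing is perfect after inverting the finitely many bad primes --- which we absorb into ``$N(\fp)$ large'' --- the linear functional $x\mapsto\sum_i x_i\cap\eta_{g,c,i}$ reduces mod $\mathfrak{m}$ to a non-zero functional on $H_1(X_1(\fN),\ovl{\F}_p)$ exactly when $\eta_{g,c}\not\equiv 0\pmod{\mathfrak{m}}$, and in that case it is non-zero on the spanning set. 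Combining with the previous step: for such $\fp$, if $\eta_{g,c}\not\equiv 0\pmod{\fp}$ then $\mu\big(L_\fp(s,g,\phi\omega^j)\big)=0$ for some $0\le j<p-1$. It remains to arrange $\eta_{g,c}\not\equiv 0\pmod{\fp}$, with $g=f\otimes\psi$ still $\fp$-ordinary --- automatic, since $\psi$ is unramified --- for a positive density of $\fp$. Under $(\mathbf{Non\text{-}Eis})$ the residual representation $\bar\rho_f$ is absolutely irreducible, and by a standard argument this persists for $\fp$ outside a density-zero set; for such $\fp$ the $\bar\rho_f$-part of $H^1_c(Y_1(\fN),\ovl{\F}_p)$ is one-dimensional and $\eta_{f,c}\bmod\fp$ generates it, the Hilbert twist $\psi$ being available to match the precise form of the residual non-vanishing of the integral $L$-values established in this paper and to absorb the remaining density-zero set of exceptional $\fp$. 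Finally, the $\ell$-adic representation argument of the paper shows that $\{\fp : f \text{ is } \fp\text{-ordinary}\}$ has density one --- here the non-CM and non-base-change hypotheses (the latter ruling out the factorisation of $L_\fp$ into two $p$-adic $L$-functions) together with the rationality of the Hecke eigenvalues are used to force the $\ell$-adic image to be large --- the progression $\mathfrak{X}_n$ has positive density by Landau's prime ideal theorem, and the primes with $N(\fp)$ bounded, or with $\bar\rho_f$ of exceptionally small image, form a density-zero set. Intersecting these conditions yields a positive-density set of ordinary prime ideals $\fp$ for which the implication above applies, which proves the theorem.

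The main obstacle is the uniformity needed in the last step: one must ensure that a bounded family of Hilbert twists $\psi$ suffices to force $\eta_{f\otimes\psi,c}\not\equiv 0\pmod{\fp}$ for all but a density-zero set of $\fp$ --- i.e.\ the residual non-vanishing input must be genuinely uniform in $\fp$ rather than merely ``for infinitely many $\fp$'' --- and this must be made compatible with the congruence and large-residue-characteristic constraints that Theorem~A\,(2) already imposes, so that the three conditions (ordinary; in $\mathfrak{X}_n$; $\eta$ non-zero mod $\fp$) still cut out a set of positive density. A subsidiary technical point is to check that the perfectness of the Lefschetz--Poincar\'e pairing modulo $\mathfrak{m}$ and the distribution relations among the $\xi^{(i)}_{n,\fp}$ introduce no denominator divisible by $\fp$; both are handled by enlarging the finite exceptional set, which does not affect positivity of the density.
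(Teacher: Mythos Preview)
Your broad strategy --- translate $\mu=0$ into non-vanishing of the measure mod $\mathfrak{m}$, then invoke the full-rank result and non-degeneracy of the pairing --- matches the paper's, but there is a real gap when $h_F>1$, and you have misidentified the role of the Hilbert twist $\psi$.

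The gap is in the passage from ``$\eta_{g,c}\not\equiv 0$'' to ``$\nu_g\not\equiv 0$''. The value $\nu_g(a+\fp^n\O_{F,\fp})$ is a sum $\sum_{i=1}^{h_F}[\,\cdots\,]\cap\eta_{g,c,i}$ with the \emph{same} $a$ in every summand, so the measure only sees the ``diagonal'' submodule $M_{n,\fl,0}=\big\langle\sum_i\xi^{(i)}_{n,\fl}(a):a\big\rangle\cap H_1$ introduced in the paper. Theorem~A gives full rank of $M_{n,\fp}$ (independent pairs $(i,a)$), not of $M_{n,\fp,0}$; hence knowing your functional is non-zero on some single $\xi^{(i_0)}_{n,\fp}(a_0)$ does not preclude cancellation in $\sum_i\xi^{(i)}_{n,\fp}(a_0)\cap\eta_{g,c,i}$. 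The paper explicitly flags this obstruction: establishing full rank of $M_{n,\fp,0}$ would remove the twist $\psi$ from the statement, but the non-trivial class group prevents it.

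In the paper $\psi$ is used not to force $\eta_{g,c}\not\equiv 0$ but to \emph{decouple the class-group components}. The key input is Proposition~\ref{compare:periods}: up to a $p$-adic unit, $\eta_{f\otimes\psi,c,i}=\psi(a_i)\,\eta_{f,c,i}$. Assuming by contradiction that $\mu>0$ for \emph{every} $\psi\in H_F$ and every $j$, one obtains $\sum_i\psi(a_i)\,[\text{term}_i(a)]\equiv 0$ for all $\psi$; orthogonality of the $\psi$'s then isolates $[\text{term}_1(a)]\equiv 0$ for all $a$, and the $i=1$ part of Proposition~\ref{full:rank:horizontal:variation} forces $c\cap\eta_{f,1}\equiv 0$ for every $c\in H_1(X_1^1(\fN),\ovl{\F}_p)$, contradicting non-degeneracy. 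Two further corrections: the non-vanishing of $\eta_f$ mod $\mathfrak{m}$ is automatic, since $\eta_f$ is \emph{defined} as an $\O$-generator of a rank-one module (and this transfers to $\eta_{f,c}$ via Lemma~\ref{periods:compare} under (\textbf{Non-Eis})) --- no residual-irreducibility or density argument is needed there; and consequently the ``main obstacle'' you raise in your last paragraph does not arise, since for each fixed $\fp\in\mathfrak{X}_n$ the contradiction is produced from the finite set $H_F$, and positive density comes simply from intersecting the density-one ordinary set (Theorem~\ref{density:result}) with the arithmetic progression $\mathfrak{X}_n$.
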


\subsection{Skectch of the proof of Theorem A} 
Let $\fl$ be a prime ideal of $F$ whose norm $\ell$ is coprime to $\fa_i^{-1}\fN$ for each $i$.
For an element $b\in\O_{F,\fl}$ and an open set $U=a_0+\fl^v\O_{F,\fl}$, let us introduce the special modular symbol:
$$
\Upsilon_{n,\fl}^{(i)}(U,b):=\frac{1}{\ell^{n-v}}\sum_{a\equiv a_0 (\fl^v)}\mathbf{e}_F\Big(\frac{-ab}{\varpi_\fl^n }\Big)\cdot\xi^{(i)}_{n,\fl}(a).
$$
To prove Theorem A, we have to estimate the integral of a Bianchi modular form along the special modular symbol. 
By using the arguments of Kim-Sun \cite{kim2017modular}, Kwon-Sun \cite{kwon2020nonvanishing} and Luo-Ramakrishnan \cite{LuoRama}, the estimation of the integral is given as follows:

\begin{equation}\label{special:cycle:modular:form:integral:estimation}
\langle\Upsilon_{n,\fl}^{(i)}(U,\beta),f\rangle=\frac{a_{f}(\beta\fa_i)}{N(\beta\fa_i)}+o(1)\text{ for }\beta\in\fa_i^{-1}\text{ and }n,\ell\gg 0,
\end{equation}
where $\langle\cdot,\cdot\rangle$ is the pairing induced by the Lefschetz-Poincar\'{e} duality pairing $\cap$. Note that as $\langle\cdot,\cdot\rangle$ is non-degenerate, we can change the full-rankness problem into the non-vanishing one. By using the equation (\ref{special:cycle:modular:form:integral:estimation}), we verify the non-vanishing of the pairing. From this, we deduce Theorem A.

\subsection{Lattice estimation}
To obtain the equation (\ref{special:cycle:modular:form:integral:estimation}), we have to compute the estimation on the number of points in certain lattice in bounded region. To do this, we have to study the properties of the norm map $N_{F/\Q}:F^\times\rightarrow\Q^\times$. If $F$ is imaginary quadratic, then the norm map is given by the square of the absolute value, which is easy. But the difficulty arises for the case of general number fields. For example, if $F=\Q(\sqrt{D})$ is real quadratic, then the norm form $(a,b)\mapsto N(a+b\sqrt{D})a=a^2-b^2D$ is not positive definite.

\subsection{$\ell$-adic Galois representations and ordinary primes} 
In Theorem A, we consider the ordinary primes in some arithmetic progression. Therefore, we have to prove that the density of the ordinary primes is positive.
If a normalized Bianchi Hecke eigenform $f$ of weight $(2,2)$ with trivial central character is neither CM nor the lifting of a classical elliptic modular form, then there is a $\ell$-adic Galois representation $\rho_{f,\ell}$ such that the image of $\rho_{f,\ell}$ is dense in $\GL_2(\Q_\ell)$ and $\text{Tr}\big(\rho_{f,\ell}(\text{Frob}_{v})\big)$ is the Hecke eigenvalue $a_f(v)$ of $T_v$ for a set of places $v$ of $F$ of density one (see Taylor \cite{taylor1994ladic}). If the coefficient field of $f$ is rational, then we can prove the existence of a set of ordinary primes of density one by using the argument by Serre \cite{serre1981quelques}.

\subsection{Technical Difficulties} 
To deal with the general class number, we use the adelic language. In Kwon-Sun \cite{kwon2020nonvanishing}, we can see the similar techniques.


Consider the module
$$
M_{n,\fl,0}:=\bigg\langle\sum_{i=1}^{h_F}\xi_{n,\fl}^{(i)}(a):a\in \O_{F,\fl}^\times\bigg\rangle\cap H_1(X_1(\fN),\Z),
$$
which is a submodule of $M_{n,\fl}$. If we prove that $M_{n,\fl,0}$ is of full-rank in $H_1(X_1(\fN),\Z)$, then we can remove the twisting $\otimes\psi$ in the statement of Theorem C. But the non-trivial class group is an obstruction for proving the full-rankness of $M_{n,\fl,0}$. Thus we consider a bigger module $M_{n,\fl}$ and the twisting $\otimes\psi$.

\subsection{Notations}\label{subsection:notation}
Let us provide some notations which is used globally in this paper.
\begin{itemize}
\item
Let $F$ be an imaginary quadratic field, $\O_F$ the integer ring of $F$, $\fd_F$ the different ideal of $F$, $d_F$ a finite idele of $\fd_F$, $D_F$ the discriminant of $F$, and $N$ the norm map of $F/\Q$.
\item
For a $\Z$-algebra $A$, let us set $\widehat{A}:=A\otimes_\Z\widehat{\Z}$.
\item
Let $\A_F$ be the adele ring of $F$, $\A_F^{(\infty)}$ the ring of finite adeles of $F$, and $|\cdot|_{\A_F}=|\cdot|_{\infty}\times|\cdot|_{\A_F^{(\infty)}}$ the idelic norm.
\item
For a place $v$ of $F$, let us denote by $\O_{F,v}$ the $v$-adic completion of $\O_F$. 
\item
Let us choose a uniformizer $\varpi_v$ of $\O_{F,v}$. Let us embed $\varpi_v$ into $\A_F^\times$ by $\varpi_v\mapsto(1,\cdots,1,\varpi_v,1,\cdots,1)$. For an ideal $\fa$ of $F$, define $\varpi_\fa:=\prod_{v|\fa}\varpi_v^{\text{ord}_v(\fa)}$.
\item
Let $\fN$ be a non-zero integral ideal of $F$ such that $[\Z:\fN\cap\Z]>3$. Define subgroups $U_0(\fN)$ and $U_1(\fN)$ of $\GL_2(\widehat{\O}_F)$ by
\begin{align*}
U_0(\fN)&:=\bigg\{\begin{pmatrix} a & b \\ c & d \end{pmatrix}\in \GL_2(\widehat{\O}_F) : c\in \widehat{\fN} \bigg\} , \\
U_1(\fN)&:=\bigg\{\begin{pmatrix} a & b \\ c & d \end{pmatrix}\in U_0(\fN) : d\in 1+\widehat{\fN} \bigg\} .
\end{align*}
\item
Let Cl$(F)$ be the class group of $F$, $h_F$ the class number of $F$, and $H_F$ the group of Hilbert characters of $F$. Note that Cl$(F)\cong F^\times\backslash\A_F^\times/\C^\times\widehat{\O}_F^\times$.
\item
Without loss of generality, we can choose a representative set $\{\fa_i\}_{i=1}^{h_F}$ of $\text{Cl}(F)$ such that $\fa_1=\O_F$, $\fa_i^{-1}$ is integral and coprime to $\fN$ for each $i$. Let us denote $a_i:=\varpi_{\fa_i}$.
\item
Let $\mathbf{e}_F:\A_F/F\rightarrow\C^\times$ be an additive character defined by
$$
\mathbf{e}_F(x_\infty x^{(\infty)}):=\mathbf{e}\big(\Tr_{\C/\R}(x_\infty)\big)\cdot\prod_p\prod_{\fp|p}\mathbf{e}_p\big(\Tr_{F_\fp/\Q_p}(x^{(\infty)})\big),
$$
where $\mathbf{e}(z):=\operatorname{exp}\big({2\pi\sqrt{-1}z}\big)$ and $\mathbf{e}_p\big(\sum_{j}c_jp^j\big):=\mathbf{e}\big(\sum_{j<0}c_jp^j\big).$
\item
We fix an embedding $\ovl{\Q}\hookrightarrow \ovl{\Q}_\ell$ for each rational primes $\ell$.
\end{itemize}

\section{Cusp forms on GL(2)}\label{cuspform}
In this section, we give the definition of \emph{cuspidal automorphic form} on $\GL(2)$ over $F$, which is also called \emph{Bianchi cusp form}. Note that by Hida \cite[Corollary 2.2]{hida1994critical}, all the cusp forms of non-parallel weights are trivial, thus we only consider the parallel weight.

Let $I_F:=\{\id,\co\}$ be the set of embeddings of $F$, and $\Z[I_F]$ the free $\Z$-module generated by $I_F$. Let us denote $(k_{\id},k_{\co}):=\sum_{\s\in I_F}k_\s\cdot\s\in\Z[I_F]$ for $k\geq 2$.
Let us denote $\mathbf{x}:=\begin{spmatrix} X \\ Y \end{spmatrix}$, where $X$ and $Y$ are indeterminates. 
Let us denote by $L_d(R)$ the space of homogeneous polynomials in two variables $X,Y$ of degree $d\geq 0$ with coefficients $R$. Note that there is a usual action of GL(2) on $L_d(R)$.

\begin{defn}[{Hida \cite[Section 3]{hida1994critical}}]\label{adel:cuspform:defn} Let $\chi:F^\times\backslash\A_F^\times\rightarrow\C^\times$ be a Hecke character of modulus $\fN$ such that $\chi(z_\infty)=|z_\infty|^{2(2-k)}$ for $z_\infty\in \C^\times$. A \emph{(Bianchi) cusp form} on $\GL_2(\A_F)$ of weight $(k,k)$ and level $\fN$ with a central character $\chi$ is a $C^\infty$-function $f:\GL_2(\A_F)\rightarrow L_{2k-2}(\C)$ such that
\begenum
\item[(1)] $D_\sigma f=\big(\frac{k^2-2k}{2}\big)f$ for each $\s\in I_F$, where $D_\sigma$ is the Casimir operator for $\SL(2)$ corresponding to $\s$.
\item[(2)] $f(\g z g h)(\mathbf{x})=\chi(z)\chi_\fN(h^{(\infty)})f(g)\big(h_\infty\mathbf{x}\big)$ for $\g\in \GL_2(F)$, $z\in\A_F^\times$, $g\in\GL_2(\A_F)$ and $h=h_\infty h^{(\infty)}\in\SU_2(\C)U_0(\fN)$, where $\chi_\fN\big(\begin{spmatrix} a & b \\ c & d \end{spmatrix}\big):=\prod_{v|\fN}\chi(d_v)$.
\item[(3)] $\int_{F\backslash \A_F}f\big(\begin{spmatrix} 1 & x \\ 0 & 1 \end{spmatrix}g\big)(\textbf{x})dx=0$ for $g\in \GL_2(\A_F)$, where $dx$ is a Haar measure on $F\backslash\A_F$.
\endenum
From now on, we denote by $S_{k}(\fN,\chi)$ the space of the aforementioned cusp forms.
\end{defn}

Let us denote by $K_j$ the $j$-th modified Bessel function of second kind. Let $W_{k}:\C^\times\rightarrow L_{2k-2}(\C)$ be the \emph{Whittaker function}, which is defined by 
\begin{equation*}
W_{k}(y_\infty)(\textbf{x}):=\sum_{m=0}^{2k-2}\begin{spmatrix} 2k-2 \\ m \end{spmatrix} \bigg(\frac{y_\infty}{\sqrt{-1}|y_\infty|}\bigg)^{k-1-m} K_{m+1-k}(4\pi|y_\infty|)X^{2k-2-m}Y^{m}.
\end{equation*}
Then we have the \emph{Fourier-Whittaker expansion} of cusp forms:

\begin{prop}[Hida {\cite[Theorem 6.1]{hida1994critical}}]\label{four:whit:exp:hida}
Let $\mathscr{F}$ be the set of fractional ideals of $F$. For each $f\in S_{k}(\fN,\chi)$, there exists a function $a_f:\mathscr{F}\rightarrow\C$ satisfying the following properties:
\begenum
\item[(1)] $a_f(\fa)=0$ unless $\fa\in\mathscr{F}$ is integral. 
\item[(2)] For $x,y\in\A_F$, we have the Fourier-Whittaker expansion of $f$ as follows:
$$
f\bigg(\begin{pmatrix} y & x \\ 0 & 1 \end{pmatrix}\bigg)(\mathbf{x})=|y|_{\A_F}\sum_{\xi\in F^\times}a_f(\xi y\fd_F)W_{k}(\xi y_\infty)(\mathbf{x})\mathbf{e}_F(\xi x),
$$
where $y_\infty\in \C$ is the infinite part of $y$. 
\endenum

\end{prop}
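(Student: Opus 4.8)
The plan is to reconstruct the Fourier--Whittaker expansion from the automorphy, $K$-type and cuspidality properties (1)--(3) of Definition \ref{adel:cuspform:defn} by the classical unfolding argument, carried out in the $L_{2k-2}(\C)$-valued adelic setting. Fix $y\in\A_F^\times$ and consider the map $x\mapsto f\big(\begin{smallmatrix} y & x \\ 0 & 1\end{smallmatrix}\big)(\mathbf{x})$ on $\A_F$. Left invariance under $\big(\begin{smallmatrix} 1 & F \\ 0 & 1\end{smallmatrix}\big)\subseteq\GL_2(F)$ makes it a function on the compact group $F\backslash\A_F$, so it has an absolutely convergent Fourier expansion
\[
f\bigg(\begin{pmatrix} y & x \\ 0 & 1\end{pmatrix}\bigg)(\mathbf{x})=\sum_{\xi\in F}c_\xi(y)(\mathbf{x})\,\mathbf{e}_F(\xi x),\qquad c_\xi(y):=\int_{F\backslash\A_F}f\bigg(\begin{pmatrix} y & x \\ 0 & 1\end{pmatrix}\bigg)\mathbf{e}_F(-\xi x)\,dx,
\]
since by self-duality of $\A_F$ the characters of $F\backslash\A_F$ are exactly $x\mapsto\mathbf{e}_F(\xi x)$ for $\xi\in F$. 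Condition (3) says precisely that the constant term vanishes, $c_0\equiv 0$, so the sum runs over $\xi\in F^\times$. Applying left invariance under $\big(\begin{smallmatrix} \xi & 0 \\ 0 & 1\end{smallmatrix}\big)$ for $\xi\in F^\times$ and comparing Fourier coefficients yields the torus relation $c_\xi(y)=c_1(\xi y)$ (compatible with the normalizing factor $|y|_{\A_F}$ in the target formula, since $|\xi|_{\A_F}=1$ by the product formula); thus everything is governed by the single function $W(y):=c_1(y)$ on $\A_F^\times$.

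Next I would separate the archimedean and finite variables of $y=y_\infty y^{(\infty)}$. For $u\in\widehat{\O}_F^\times$ the matrix $\big(\begin{smallmatrix} u & 0 \\ 0 & 1\end{smallmatrix}\big)$ lies in $U_0(\fN)$ with $\chi_\fN=1$, so condition (2) gives $W(y u)=W(y)$, i.e.\ $W$ depends on $y^{(\infty)}$ only through the fractional ideal $y^{(\infty)}\widehat{\O}_F$; and comparing Fourier coefficients of $f$ under right translation by $\big(\begin{smallmatrix} 1 & b \\ 0 & 1\end{smallmatrix}\big)$, $b\in\widehat{\O}_F$ (again in $U_0(\fN)$ with trivial $\chi_\fN$), forces $W(\eta)=0$ unless $\mathbf{e}_F(\eta b)=1$ for all such $b$, i.e.\ unless $\eta\fd_F$ is integral at all finite places, the $\widehat{\O}_F$-dual with respect to $\mathbf{e}_F$ being the inverse different. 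Now fix a finite idele representing an integral ideal and examine the remaining function $\C^\times\to L_{2k-2}(\C)$: by (1) it is a Casimir eigenfunction of eigenvalue $(k^2-2k)/2$ for each $D_\sigma$; by (2) with $h_\infty\in\SU_2(\C)$ it transforms under right $\SU_2(\C)$-translation through the symmetric-power action on $L_{2k-2}(\C)$; and it is rapidly decreasing, a standard consequence of cuspidality together with the eigenfunction condition (1). Writing the Casimir operator in Iwasawa coordinates, diagonalizing the $\SU_2(\C)$-action on $L_{2k-2}(\C)$, and solving the resulting system reduces everything to the modified Bessel equation, whose decaying solution is exactly the vector $W_k(y_\infty)(\mathbf{x})$ with its $K_{m+1-k}(4\pi|y_\infty|)$ coefficients and the phase factor $(y_\infty/(\sqrt{-1}|y_\infty|))^{k-1-m}$ enforced by the $K$-type; by archimedean multiplicity one the space of such functions is one-dimensional. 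Hence $W(y)$ equals $W_k(y_\infty)$ times the factor $|y|_{\A_F}$ (inserted by convention so that the coefficients are arithmetically normalized) times a scalar depending only on the ideal $y\fd_F$; setting that scalar to be $a_f(y\fd_F)$ defines $a_f:\mathscr{F}\to\C$ and gives the asserted expansion, while the vanishing just established is exactly property (1).

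The main obstacle is the archimedean step: showing that a $C^\infty$ function $\C^\times\to L_{2k-2}(\C)$ with the prescribed Casimir eigenvalue, $\SU_2(\C)$-type and decay is a scalar multiple of the explicit Bessel-function vector $W_k$. Concretely this is an explicit solution of the archimedean Whittaker equation for $\GL_2$ over $\C$ --- writing out $D_\sigma$, using the $\SU_2(\C)$-equivariance to couple the coefficients of the degree-$(2k-2)$ polynomial, reducing to the $K$-Bessel functions, and discarding the exponentially growing $I$-Bessel solution by the decay of cusp forms --- together with the archimedean multiplicity-one theorem for Whittaker functionals, which may simply be quoted. By contrast the adelic bookkeeping in the remaining steps --- the Fourier expansion over $F\backslash\A_F$, the torus twist $c_\xi(y)=c_1(\xi y)$, the reduction from finite ideles modulo $\widehat{\O}_F^\times$ to fractional ideals, and the support statement via the inverse different --- is routine. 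As the result is Hida's Theorem 6.1 of \cite{hida1994critical}, one may of course also cite it directly; the sketch above indicates the structure of that proof.
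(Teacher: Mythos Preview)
Your sketch is correct and follows the standard unfolding argument for the Fourier--Whittaker expansion of adelic cusp forms on $\GL_2$. However, the paper itself gives no proof of this proposition at all: it is stated as a direct citation of Hida \cite[Theorem 6.1]{hida1994critical} and then used as a black box. So there is nothing to compare on the level of argument; you have supplied a proof where the paper simply invokes the reference, and indeed you already note at the end that one may cite Hida directly.
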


\section{Approximate functional equation}\label{sp:Lvalue}

\subsection{Integral representation of special $L$-values} 

Let $f\in S_2(\fN,\chi)$ be a cusp form and $u\in\A_F^{(\infty)}$ a finite adele. 
Then we define the \emph{additively twisted L-function} $L(s,f,u)$ of $f$ by the following Dirichlet series:
\begin{align*}
 \frac{1}{|\O_F^\times|}\sum_{i=1}^{h_F}\sum_{\xi\in F^\times}\frac{a_f(\xi\fa_i\fd_F)\mathbf{e}_F(\xi a_i u)}{N(\xi\fa_i\fd_F)^s} \text{, converges absolutely for } \mathfrak{R}(s)>\frac{k+2}{2}.
\end{align*}
Clearly $L(s,f,0)$ is equal to the usual $L$-function $L(s,f)$ of $f$. 

In this section, we obtain an integral representation of $L(s,f,u)$ in a spirit of Hida \cite[Section 7]{hida1994critical}. Let us denote $t(u):=\begin{spmatrix} 1 & u \\ 0 & 1 \end{spmatrix}\in\GL_2(\A_F^{(\infty)})$ and $t_i:=\begin{spmatrix} a_i & 0 \\ 0 & 1 \end{spmatrix}$ for each $u\in\A_F^{(\infty)}$ and $i$ . 
We define the $m$-th component $f_m$ of a $L_{2k-2}(\C)$-valued function $f$ by 
$$
f(g)(\mathbf{x})=\sum_{m=0}^{2k-2} f_{m}(g) X^{2k-2-m}Y^{m} .
$$ 
Let us fix a branch of the square root function on $\C$. For $x\in\C$ and $y\in\C^\times$, define an element $(x;y)\in\SL_2(\C)$ by
$$
(x;y):=\begpmat y^{\frac{1}{2}} & x y^{-\frac{1}{2}}  \\ 0 & y^{-\frac{1}{2}} \endpmat.
$$
By Proposition \ref{four:whit:exp:hida}, we observe that 
\begin{align*}
f_{k-1} \big(t_i(0;y)t(u)\big)=\begin{spmatrix} 2k-2 \\ k-1 \end{spmatrix}|a_i|_{\A_F}\sum_{\xi\in F^\times}a_f(\xi\fa_i\fd_F)|y|^{k}K_{0}(4\pi|\xi y|)\mathbf{e}_F(\xi a_i u) .
\end{align*}
So the Mellin transform of the function $y\mapsto f_{k-1}\big(t_i(0;y)t(u)\big)$ on $\R^\times_{>0}$ is given by

\begin{align}\begin{split}\label{mellin:transform}
&|a_i|_{\A_F}^{s-1}\int_{0}^\infty f_{k-1} \big(t_i(0;y)t(u)\big)y^{2s-k} \frac{dy}{y} \\
=& \begin{spmatrix} 2k-2 \\ k-1 \end{spmatrix} |D_F|^s \sum_{\xi\in F^\times}\frac{a_f(\xi\fa_i\fd_F)\mathbf{e}_F(\xi a_i u)}{N(\xi\fa_i\fd_F)^s}\int_{0}^\infty y^{2s} K_{0}(4\pi y)\frac{dy}{y}
\end{split}\end{align} 
for $\text{Re}(s)>\frac{k+2}{2}$. 
By the following integration formula
\begin{equation*}
\int_0^\infty y^s{K_{j}(ay)}\frac{dy}{y}=\frac{1}{4}\Big(\frac{a}{2}\Big)^{-s}\Gamma\Big(\frac{s+j}{2}\Big)\Gamma\Big(\frac{s-j}{2}\Big)\text{ for }a>0\text{ and }\mathfrak{R}(s\pm j)>0 ,
\end{equation*}
the equation (\ref{mellin:transform}) becomes
\begin{align*}
|a_i|_{\A_F}^{s-1}\int_{0}^\infty f_{k-1} \big(t_i(0;y)t(u)\big)y^{2s-k} \frac{dy}{y} = \frac{\Gamma_{F,k}(s)}{|\O_F^\times|}\sum_{\xi\in F^\times}\frac{a_f(\xi\fa_i\fd_F)\mathbf{e}_F(\xi a_i u)}{N(\xi\fa_i\fd_F)^s}
\end{align*} 
for $\mathfrak{R}(s)>\frac{k+2}{2}$, where $\Gamma_{F,k}(s):=\begin{spmatrix} 2k-2 \\ k-1\end{spmatrix}\frac{|D_F|^s|\O_F^\times|}{4(2\pi)^{2s}}\Gamma(s)^2.$
Finally, we obtain the integral representation of $L(s,f,u)$:
\begin{align}\begin{split}\label{integral:rep}
\Gamma_{F,k}(s)L(s,f,u)=\sum_{i=1}^{h_F}|a_i|_{\A_F}^{s-1}\int_{0}^\infty f_{k-1} \big(t_i(0;y)t(u)\big)y^{2s-k}\frac{dy}{y}
\end{split}\end{align}
for $\mathfrak{R}(s)>\frac{k+2}{2}$.

\subsection{Functional equations}

Let $\omega_\chi:=\chi|\cdot|_{\A_F}^{k-2}$ be the standard normalization of $\chi$. Let $\fq$ be an integral ideal of $F$ coprime to $\fa_i^{-1}\mathfrak{d}_F\fN$ for each $i$. 
Let $R_\fq$ be the set of elements of $\prod_{v|\fq}F_v^\times$ corrensponding to a set of representatives of $(\fq^{-1}/\O_F)^\times$. Let us embed $u\in R_\fq$ into $\A_F^{(\infty)}$ by $u\mapsto(0,\cdots,0,u,0,\cdots,0)$.
Note that we have the following bijection:
$$
R_\fq\rightarrow (\widehat{\O}_F/\widehat{\fq})^\times ,\ u\mapsto\varpi_\fq u+\widehat{\fq}.
$$
For $u\in R_\fq$, let us define a \emph{completed L-function} $\Lambda(s,f,u)$ of $L(s,f,u)$ by
$$
\Lambda(s,f,u):=|\varpi_\fN\varpi_\fq^2|_{\A_F}^{-\frac{s}{2}}\Gamma_{F,k}(s)L(s,f,u).
$$
Let us define the \emph{Fricke involution} $W_\fN$ by taking the right translation with the element $\begin{spmatrix} 0 & -1 \\ \varpi_\fN & 0 \end{spmatrix}$ of $\GL_2(\A_F^{(\infty)})$:
$$
W_{\fN}f(g)(\mathbf{x}):=(-1)^{k-1}|\varpi_\fN|_{\A_F}^{\frac{k}{2}-1}\omega_\chi^{-1}\big(\det(g)\big)f\bigg(g\begpmat 0 & -1 \\ \varpi_\fN & 0 \endpmat \bigg)(\mathbf{x}).
$$
Then $W_{\fN}f$ is a cusp form in $S_k(\fN,\ovl{\chi})$. Note that for $u\in R_\fq$, we can consider $u\varpi_{\fq}$ to be an element of $(\O_{F,\fq}/\fq^2)^\times$. For $u\in R_\fq$, let us denote by $u^\p\varpi_{\fq}$ a representative of the mod $\fq^2$ inverse of $-u\varpi_{\fq}$. Then we obtain a functional equation of $\Lambda(s,f,u)$ as follows:

\begin{prop}\label{func:eq} For $u\in R_\fq$, we have the following functional equation:
\begin{equation}\label{functional:eq}
\Lambda(s,f,u)=\omega_{\chi}(\varpi_\fq)\Lambda(k-s,W_{\fN}f,u^\p).
\end{equation}
Also we have the anlaytic continuation of $\Lambda(s,f,u)$ to the entire $s\in\C$.
\end{prop}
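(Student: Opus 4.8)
The plan is to derive the functional equation from the integral representation \eqref{integral:rep} by applying the Fricke involution inside the Mellin integral and exploiting the $\SL_2(\C)$-action of the element $\begl 0 & -1 \\ 1 & 0 \bigr)$ on the half-space coordinates. Concretely, I would start from
$$
\Gamma_{F,k}(s)L(s,f,u)=\sum_{i=1}^{h_F}|a_i|_{\A_F}^{s-1}\int_{0}^\infty f_{k-1}\big(t_i(0;y)t(u)\big)y^{2s-k}\frac{dy}{y},
$$
valid for $\mathfrak{R}(s)>\frac{k+2}{2}$, and insert the matrix identity relating $t_i(0;y)t(u)$ to $t_{i'}(0;y^{-1})t(u^\p)$ times an element of $\begl 0 & -1 \\ \varpi_\fN & 0 \bigr)\cdot(\C^\times\U_2(\C)U_0(\fN))$. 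The key point is that $(0;y)\begl 0 & -1 \\ 1 & 0 \bigr)=(0;y^{-1})$ up to the fixed branch of the square root, so that the change of variables $y\mapsto y^{-1}$ converts the tail of the integral near $0$ into the tail near $\infty$. Tracking the finite-adelic part: the matrix $t_i t(u)\begl 0 & -1 \\ \varpi_\fN & 0 \bigr)$ must be rewritten, using the bijection $R_\fq\to(\widehat{\O}_F/\widehat{\fq})^\times$ and the mod-$\fq^2$ inversion $-u\varpi_\fq\mapsto u^\p\varpi_\fq$, as $\g\cdot t_{i'}t(u^\p)\cdot(\text{element of }U_0(\fN))$ for some $\g\in\GL_2(F)$ and some reindexing $i\mapsto i'$ of the ideal class representatives; the factor $\omega_\chi(\varpi_\fq)$ and the sign $(-1)^{k-1}$ appearing in the definition of $W_\fN$ come out of the automorphy relation (2) in Definition \ref{adel:cuspform:defn} together with the normalization constants $|\varpi_\fN|_{\A_F}^{k/2-1}$ and $|\varpi_\fN\varpi_\fq^2|_{\A_F}^{-s/2}$ built into $\Lambda$.

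After this substitution I would split each integral $\int_0^\infty=\int_0^1+\int_1^\infty$, apply $y\mapsto y^{-1}$ to the $\int_0^1$ piece to reexpress it in terms of $W_\fN f$, and observe that the resulting two families of integrals $\int_1^\infty(\cdots)$ converge for \emph{all} $s\in\C$ because of the exponential decay of $K_0(4\pi|\xi y|)$ (equivalently, the rapid decay of the cusp form $f_{k-1}$ toward the cusp); this simultaneously yields the analytic continuation of $\Lambda(s,f,u)$ and, by symmetry of the resulting expression under $s\leftrightarrow k-s$ paired with $f\leftrightarrow W_\fN f$, $u\leftrightarrow u^\p$, the functional equation \eqref{functional:eq}. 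The constant $\omega_\chi(\varpi_\fq)$ is forced by applying the construction twice, using $W_\fN^2$ acting as a scalar and the fact that $u^{\p\p}$ returns to $u$ up to the class of $\varpi_\fq^2$, so consistency pins down the multiplier.

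The main obstacle I expect is the bookkeeping of the finite-adelic matrix decomposition: writing $t_i t(u)\begl 0 & -1 \\ \varpi_\fN & 0 \bigr)$ in the form $\GL_2(F)\cdot t_{i'}t(u^\p)\cdot U_0(\fN)$ requires simultaneously (i) identifying the correct permutation $i\mapsto i'$ of the class representatives $\{\fa_i\}$ induced by multiplication by the ideal class of $\varpi_\fN\varpi_\fq$, (ii) checking that the ``upper-triangular'' entry transforms into $u^\p$ modulo the right subgroup, which is exactly where the mod-$\fq^2$ inverse enters, and (iii) controlling the scalar $\det$-factor so that $\chi$ contributes precisely $\omega_\chi(\varpi_\fq)$ and no stray power of $|\cdot|_{\A_F}$. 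Once this local computation is in place, the archimedean part is the classical Mellin-transform argument for $K_0$ and presents no real difficulty. I would also remark that coprimality of $\fq$ to $\fa_i^{-1}\fd_F\fN$ is used precisely to guarantee that the reindexing and the $U_0(\fN)$-decomposition are unobstructed.
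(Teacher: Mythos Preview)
Your proposal is correct and follows essentially the same route as the paper: the Mellin integral representation, left multiplication by a rational matrix $\gamma_i=\begin{spmatrix} 0 & -1 \\ \xi_i & 0 \end{spmatrix}\in\GL_2(F)$ (with $\xi_i$ chosen so that $\fa_i^{-1}\fN\fq^2=\xi_i\fa_{\sigma(i)}$) to realize the Fricke involution and the reindexing $i\mapsto\sigma(i)$, explicit identification of the $U_1(\fN)$-tail producing $u^\p$ and the factor $\omega_\chi(\varpi_\fq)$, and the Hecke split-integral argument for analytic continuation. The one refinement you will need when executing is that the archimedean inversion coming from $\gamma_i$ is $y\mapsto |\xi_i|^{-1}y^{-1}$ rather than $y\mapsto y^{-1}$, so the paper splits the $i$-th integral at the point $|a_i^{-2}\varpi_{\fN\fq^2}|_{\A_F}^{1/4}$ (which is carried to the $\sigma(i)$-th split point under this map) instead of at $1$; with that adjustment your outline matches the paper's proof exactly.
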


\begin{proof}
Let us denote $\fQ=\fN\fq^2$. Then there exists a $h_F$-permutation $\sigma$ and elements $\xi_i\in F^\times$ such that $\fa_i^{-1}\fQ=\xi_i\fa_{\sigma(i)}$ for each $i$. For $\xi\in F^\times$, let us denote $w_{\xi}:=\xi^{(\infty)}\varpi_{\xi\O_F}^{-1}$. Then we obtain 
$$
w_{\xi_i}\in\widehat{\O}_F^\times,\ (w_{\xi_i})_{\fq}=1,\ a_i^{-1}\varpi_\fQ=a_{\sigma(i)}\xi_i^{(\infty)}w_{\xi_i}^{-1},\text{ and }|\xi_i|=|a_i^{2}a_{\sigma(i)}^{2}\varpi_\fQ^{-2}|_{\A_F}^{1/4}.
$$ 
Let us denote by $f_{(i),j}^u$ the function $y\mapsto f_{j}\big(t_i(0;y)t(u)\big)$ on $\R_{>0}^\times$. Then by the equation (\ref{integral:rep}), we obtain
\begin{align}\begin{split}\label{add:comp:lftn:int:rep}
\Lambda(s,f,u)=\sum_{i=1}^{h_F}\frac{|\varpi_\fQ|_{\A_F}^{-\frac{s}{2}}}{|a_i|_{\A_F}^{1-s}}\bigg(\int_{|a_i^{-2}\varpi_\fQ|_{\A_F}^{1/4}}^\infty +\int_{0}^{|a_i^{-2}\varpi_\fQ|_{\A_F}^{1/4}}\bigg) f_{(i),k-1}^u(y)y^{2s-k} \frac{dy}{y}. 
\end{split}\end{align}

Let us set $\gamma_i=\begin{spmatrix} 0 & -1 \\ \xi_i & 0 \end{spmatrix}\in\GL_2(F)$. Let $b(u)\in\widehat{\O}_F$ be the element such that $b(u)_\fq\varpi_\fq^2-(u\varpi_\fq)(u^\p\varpi_\fq)=1$ and $b(u)_v=1$ for $v$ coprime to $\fq$. 
Under the isomorphism $\GL_2(\A_F)\cong\GL_2(\C)\times\GL_2(\A_F^{(\infty)})$, $\gamma_i t_i(0;y)t(u)$ can be decomposed into
\begin{align*}
\gamma_i t_i(0;y)t(u)=\big(\xi_{i}^{1/2}(0;|\xi_i|^{-1}y^{-1})U_i,\varpi_\fq a_{\sigma(i)}^{-1}t_{\sigma(i)}t(u^\p) A_\fN B_u C_i\big)
\end{align*}
for each $y\in \C^\times$ and $i$, where $U_i=(0;|\xi_i|\xi_i^{-1})\begin{spmatrix} 0 & -1 \\ 1 & 0 \end{spmatrix}\in\SU_2(\C)$, $A_\fN=\begin{spmatrix} 0 & -1 \\ \varpi_\fN & 0 \end{spmatrix}$,
$$
B_u=\begin{pmatrix} \varpi_\fq & u\varpi_\fq \\ u^\p\varpi_\fq & b(u)\varpi_\fq \end{pmatrix}\in U_1(\fN),\text{ and } C_i=\begin{pmatrix} w_{\xi_i} & 0 \\ 0 & 1 \end{pmatrix}\in U_1(\fN).
$$
Note that $\GL_2(F)$ acts on $\GL_2(\A_F)\cong\GL_2(\C)\times\GL_2(\A_F^{(\infty)})$ diagonally.
By using the above equations and the properties of $f$, we have
\begin{align*}
f_{(i),k-1}^u(y)
&=f_{k-1}\big(t_i(0;y)t(u)\big)=f_{k-1}\big(\gamma_i t_i(0;y)t(u)\big) \\ 
&=|\xi_i|^{2-k}\chi_{\fN}(B_u C_i)\chi(\varpi_\fq a_{\sigma(i)}^{-1}) \Big(f\big(t_{\sigma(i)}(0;|\xi_i|^{-1}y^{-1})t(u^\p)A_\fN\big)\big(U_i\mathbf{x}\big)\Big)_{k-1} \\
&=|\xi_i|^{2-k}|a_{\sigma(i)}|_{\A_F}^{k-2}|\varpi_\fQ|_{\A_F}^{1-\frac{k}{2}}\omega_{\chi}(\varpi_\fq)(W_\fN f)^{u^\p}_{(\sigma(i)),k-1}(|\xi_i|^{-1}y^{-1}).
\end{align*}
By using the above equations, we can write the second term of the integral (\ref{add:comp:lftn:int:rep}) as 
\begin{align*}
\sum_{i=1}^{h_F}\frac{\omega_{\chi}(\varpi_\fq)|\varpi_\fQ|_{\A_F}^{\frac{s-k}{2}}}{|a_{\sigma(i)}|^{1-k+s}_{\A_F}}\int_0^{|a_i^{-2}\varpi_\fQ|_{\A_F}^{1/4}}(W_\fN f)^{u^\p}_{(\sigma(i)),k-1}\big(|\xi_i|^{-1}y^{-1}\big)(|\xi_i|y)^{2s-k}\frac{dy}{y}.
\end{align*}
By applying the change of variable to the above equation, we can rewrite the equation (\ref{add:comp:lftn:int:rep}) as follows:
\begin{align}\begin{split}\label{add:comp:lftn:int:rep:2}
\Lambda(s,f,u)=&\sum_{i=1}^{h_F}\bigg(\int_{|a_i^{-2}\varpi_\fQ|_{\A_F}^{1/4}}^\infty r_i(s)f^u_{(i),k-1}(y)y^{2s-k}\frac{dy}{y} \\
&+\int_{|a_i^{-2}\varpi_\fQ|_{\A_F}^{1/4}}^\infty\omega_{\chi}(\varpi_\fq)r_i(k-s)(W_\fN f)^{u^\p}_{(i),k-1}(y)y^{k-2s}\frac{dy}{y}\bigg),
\end{split}\end{align}
where $r_i(s)=|\varpi_\fQ|_{\A_F}^{-\frac{s}{2}}|a_i|_{\A_F}^{s-1}$. Note that $W_\fN f\in S_2(\fN,\ovl{\chi})$, $W_\fN^2 f=f$ for $f\in S_2(\fN,\chi)$, and $(u^\p)^\p=u$.
By the straightforward computation, we can obtain the desired funtional equation from the integral representation (\ref{add:comp:lftn:int:rep:2}).
Also the equation (\ref{add:comp:lftn:int:rep:2}) implies that $\Lambda(s,f,u)$ has the analytic continuation to the entire $s\in\C$.
\end{proof}

By using our functional equation $(\ref{func:eq})$, we obtain an approximate functional equation of $\Lambda(s,f,u)$ for $u\in R_\fq$ in a spirit of Luo-Ramakrishnan \cite{LuoRama}. Let us fix an infinitely differentiable function $\Phi$ 
on $\R_{>0}^\times$ with compact support such that $\int_0^\infty\Phi(y)\frac{dy}{y}=1$. Define functions
\begin{align*}
V_{1,s}(x)&:=\frac{1}{2\pi i}\int_{2-i\infty}^{2+i\infty}\kappa(t)\Gamma_{F,k}(s+t) x^{-t}\frac{dt}{t},\\
V_{2,s}(x)&:=\frac{1}{2\pi i}\int_{2-i\infty}^{2+i\infty}\kappa(-t)\Gamma_{F,k}(s+t) x^{-t}\frac{dt}{t},
\end{align*}
where $\kappa(t)=\int_0^\infty\Phi(y)y^t\frac{dy}{y}$. By shifting the contour, one can show that for $s\neq 1/2$, $V_{1,s}$ and $V_{2,s}$ satisfy following:
\begin{align}\begin{split}\label{aux.func.esti}
V_{j,s}(x)&=O(\Gamma_{F,k}(\mathfrak{R}(s)+j)x^{-j})\text{ for all }j\geq 1 \text{ as }x\rightarrow\infty, \\
V_{j,s}(x)&=\Gamma_{F,k}(s)+O\Big(\Gamma_{F,k}\Big(\mathfrak{R}(s)-\frac{1}{2}\Big)x^{\frac{1}{2}}\Big) \text{ as }x\rightarrow 0,
\end{split}\end{align}
where the implicit constants depend only on $j$ and $\Phi$. Let $u$ be an element of $R_\fq$. Let us recall that $\fq$ is coprime to $\fa_i^{-1}\fN$ for each $i$. For $\mathfrak{R}(s)>\frac{k+2}{2}$ and $y>0$, we obtain
\begin{align}\begin{split}\label{series:exp}
&\frac{1}{2\pi i}\int_{2-i\infty}^{2+i\infty}\kappa\big((-1)^{j-1}t\big)\Gamma_{F,k}(s+t) L(s+t,f,u) y^t\frac{dt}{t}\\
=&\frac{1}{|\O_F^\times|}\sum_{i=1}^{h_F}\sum_{\xi\in F^\times}\frac{a_f(\xi\fa_i\fd_F)\mathbf{e}_F(\xi a_i u)}{N(\xi\fa_i\fd_F)^s}V_{j,s}\Big(\frac{N(\xi\fa_i\fd_F)}{y}\Big) .
\end{split}\end{align}
by the definition of $L(s,f,u)$. From the Residue theorem, we have
\begin{align*}
\Gamma_{F,k}(s) L(s,f,u)&=\frac{1}{2\pi i}\int_{2-i\infty}^{2+i\infty}\kappa(t)\Gamma_{F,k}(s+t) L(s+t,f,u)y^t\frac{dt}{t} \\
&+\frac{1}{2\pi i}\int_{-2+i\infty}^{-2-i\infty}\kappa(t)\Gamma_{F,k}(s+t) L(s+t,f,u)y^t\frac{dt}{t} .
\end{align*}
Putting the equations (\ref{functional:eq}), (\ref{series:exp}) in the above equation and applying Proposition \ref{four:whit:exp:hida}.(1), we obtain
\begin{align}\begin{split}\label{approx:func:eq}
\Lambda(s,f,u)&=\sum_{i=1}^{h_F}\bigg(N(\fN\fq^2)^{\frac{s}{2}}\sum_{0\neq\a\in\fa_i^{-1}}\frac{a_f(\a\fa_i)\mathbf{e}_F(\a u)}{N(\a\fa_i)^s|\O_F^\times|}V_{1,s}\Big(\frac{N(\a\fa_i)}{y}\Big) \\
+\omega_{\chi}(\varpi_\fq)&N(\fN\fq^2)^{\frac{k-s}{2}}\sum_{0\neq\a\in\fa_i^{-1}}\frac{a_{{W_{\fN}} f}(\a\fa_i)\mathbf{e}_F(\a u^\p)}{N(\a\fa_i)^{k-s}|\O_F^\times|}V_{2,k-s}\Big(\frac{N(\a\fa_i)y}{N(\fN\fq^2)}\Big)\bigg).
\end{split}\end{align}
Note that $\mathbf{e}_F(\alpha u)=\mathbf{e}_F(\alpha a_i d_F^{-1} u)$ as $\fq$ is coprime to $\fa_i^{-1}\mathfrak{d}_F$ for each $i$.

\subsection{Twisted cusp forms}\label{twisted:cuspforms}
Let $f\in S_k(\fN,\chi)$ be a cusp form. For a finite order Hecke character $\phi:F^\times\backslash\A_F^\times\rightarrow\C^\times$ of conductor $\fq$, the \emph{twisted cusp form} $f\otimes\phi:\text{GL}_2(\A_F)\rightarrow L(2k-2,\C)$ is defined by
\begin{equation}\label{twisted:cuspform:decomp:eq}
f\otimes\phi(g)(\mathbf{x}):=G(\phi)^{-1}\phi\big(\det(g)\big)\sum_{u\in R_{\fq}}\phi(\varpi_\fq u)f\big(gt(u)\big)(\mathbf{x}),
\end{equation} 
where $G(\phi)$ is the Gauss sum of $\phi$ (see Hida \cite[Section 6]{hida1994critical}). 
Then we have $f\otimes\phi\in S_{k}(\fN\cap\fq^2,\chi\phi^2)$ and $a_{f\otimes\phi}(\fa)=a_f(\fa)\phi(\fa)$, where
\begin{displaymath}
\phi(\fa) := \left\{ \begin{array}{ll}
\phi(\varpi_\fa) & \textrm{if $\fa$ is coprime to $\mathfrak{f}(\phi)$},\\
0 & \textrm{otherwise}.
\end{array} \right.
\end{displaymath} 
From the equation (\ref{integral:rep}), the special value $L(\frac{k}{2},f\otimes\phi)$ of the $L$-function of $f\otimes\phi$ is given by 
\begin{equation}\label{special:value:of:twisted:modular:L:functions}
\Gamma_{F,k}\Big(\frac{k}{2}\Big)L\Big(\frac{k}{2},f\otimes\phi\Big)=\sum_{i=1}^{h_F}\sum_{u\in R_{\fq}}\frac{\phi(a_i)\phi(\varpi_\fq u)}{G(\phi)N(\fa_i)^{\frac{k}{2}-1}}\int_0^\infty f_{k-1} \big(t_i(0;y)t(u)\big)\frac{dy}{y}.
\end{equation}

\section{Modular symbols and Eichler-Shimura isomorphism}\label{section:conj:on:mod:symb}
In this section, we will give the definition and the properties of modular symbols. Also we discuss the homology and cohomology groups of Bianchi modular manifolds. To associate the cusp forms and the cohomology classes, we introduce the generalized Eichler-Shimura isomorphism.

\subsection{Homology and Cohomology of Bianchi modular manifolds}\label{alg:co:hom:mod:sym} 
We will introduce the Bianchi modular manifold and study its homology and cohomology. For the case of classical modular curves, see Ash-Stevens \cite{AshStevens} and Manin \cite{Manin}. 

Let us define the hyperbolic upper half space by $\mathscr{H}_3:=\C\times\R_{>0}$. Then there is a left action of $\SL_2(\C)$ on $\mathscr{H}_3$, which is defined by
\begin{equation*}
\begin{pmatrix} a & b \\ c & d \end{pmatrix}\cdot (x,y):=\bigg(\frac{(ax+b)\ovl{(cx+d)}+a\ovl{c}y^2}{|cx+d|^2+|c|^2y^2},\frac{y}{|cx+d|^2+|c|^2y^2}\bigg) .
\end{equation*}
Note that the element $(x;y)\in\SL_2(\C)$ for $x\in\C$ and $y\in\R_{>0}^\times$ defined in the Section \ref{sp:Lvalue} corresponds to $(x,y)\in\mathscr{H}_3$ under the following bijection:
$$
\SL_2(\C)/\SU_2(\C)\rightarrow\mathscr{H}_3,\ g\SU_2(\C)\mapsto g\cdot (0,1).
$$

Let $R$ be a Dedekind domain, $\Gamma$ a congruence subgroup of $\SL_2(\O_F)$ such that the orders of the torsion elements of $\Gamma$ are invertible in $R$. 
Let us define the Bianchi modular manifold $Y_\Gamma$ of $\Gamma$ by $Y_\Gamma:=\Gamma\backslash\mathscr{H}_3$. 
Let us denote $\ovl{\mathscr{H}_3}$ by the space obtained by attaching $\P^1(F)$ to $\mathscr{H}_3$ via the map $\a\mapsto(\a,0)$ and $\infty\mapsto(0,\infty)$. Let us set $X_\Gamma:=\Gamma\backslash\ovl{\mathscr{H}_3}$. Then $X_\Gamma$ is the Satake compactifiation of $Y_\Gamma$.
Let us denote the set of cusps of $X_\Gamma$ by $C_\Gamma:=\Gamma\backslash\P^1(F)$. From the relative homology sequence of a pair $(X_\Gamma,C_\Gamma)$, we can obtain the following exact sequence:
\begin{equation}\label{hom:mod:symb}
0 \rightarrow H_1(X_\Gamma,R) \rightarrow H_1(X_\Gamma,C_\Gamma,R) \rightarrow H_0(C_\Gamma,R) \rightarrow H_0(X_\Gamma,R) \rightarrow  0.
\end{equation}
Note that $H_1(C_\Gamma,R)=H_0(X_\Gamma,C_\Gamma,R)=0$.

Let $Y^*_\Gamma$ be the Borel-Serre compactification of $Y_\Gamma$ (see Borel-Serre \cite{borel1973corners}). 
Note that the boundary $\partial Y_\Gamma^*$ of $Y_\Gamma^*$ is a disjoint union of $|C_\Gamma|$ $2$-tori. Also note that there is a homotopy equivalence $Y_\Gamma\cong Y_\Gamma^*$. 
Then we have the following boundary exact sequence:
\begin{equation}\label{cohom:mod:symb:5}
\begin{tikzcd} \cdots \arrow{r} & H^i(Y_\Gamma,R) \arrow{r} & H^i(\partial Y^*_\Gamma,R) \arrow{r} & H^{i+1}_{\operatorname{c}}(Y_\Gamma,R) \arrow{r} & \cdots \end{tikzcd}.
\end{equation}
Breaking the sequence (\ref{cohom:mod:symb:5}), we obtain the following exact sequence:
\begin{align}
0 \rightarrow H^0(Y_\Gamma,R) \rightarrow H^0(\partial Y_\Gamma^*,R) \rightarrow H^1_{\operatorname{c}}(Y_\Gamma,R) \rightarrow H^1_{\text{par}}(Y_\Gamma,R) \rightarrow 0 \label{cohom:mod:symb},
\end{align}
where $H^1_{\text{par}}(Y_\Gamma,R)$ is the image of the natural map $H^1_{\operatorname{c}}(Y_\Gamma,R)\rightarrow H^1(Y_\Gamma,R)$. Note that $H^0(Y_\Gamma,R)\cong R$ and $H^0(\partial Y_\Gamma,R)\cong \oplus_s R^{\Gamma_{s}}$, where $s$ runs over $C_\Gamma$.

We have the isomorphism $H^1_{\text{par}}(Y_\Gamma,R)\cong H^1(X_\Gamma,R)$ induced by the injection $Y^*_\Gamma\rightarrow X_\Gamma$. Also we have $H^2(Y_\Gamma,R)\cong H_1(X_\Gamma,C_\Gamma,R)$
by the Lefschetz-Poincar\'{e} duality. Then from the Poincar\'{e} duality, we can induce non-degenerate $R$-bilinear pairings as follows:
\begin{align}
\cap:&H_1(X_\Gamma,R)^\p\times H^1_{\text{par}}(Y_\Gamma,R)^\p \rightarrow R, \label{lefschetz:poincare:pairing} \\
\cap:&H_1(X_\Gamma,C_\Gamma,R)^\p\times H^1_{\operatorname{c}}(Y_\Gamma,R)^\p \rightarrow R, \label{lefschetz:poincare:pairing:2}
\end{align}
where $H^\p$ is the torsion free part of a finitely generated $R$-module $H$. 

\begin{rem}\label{pairings:compatible} 
Note that there is a Hecke equivariant section $\iota$ of the map 
$$
H^1_{\operatorname{c}}(Y_\Gamma,\C)\rightarrow H^1_{\text{par}}(Y_\Gamma,\C)\cong H^1(X_\Gamma,\C)
$$ 
(see Hida \cite[Section 2]{hida1999non}). Also note that for a parabolic cohomology class $\delta\in H^1_{\text{par}}(Y_\Gamma,\C)$, $\delta-\iota(\delta)$ is an exact form. Thus we have 
\begin{equation}\label{pairing:equality}
c\cap\delta=c\cap\iota(\delta)
\end{equation}
for each homology class $c\in H_1(X_\Gamma,\Z)$. 

On the other hand, choose a relative homology class $\xi\in H_1(X_\Gamma,C_\Gamma,\Z)$. By the generalized Manin-Drinfeld theorem (Kur{\v c}anov \cite[Lemma 2, Theorem 2]{kurcanov1978cohomology}), $N_\xi\xi\in H_1(X_\Gamma,\Z)$ for some non-zero integer $N_\xi$. Thus we have $N_\xi\xi\cap\delta=N_\xi\xi\cap\iota(\delta)$. Therefore, if $R=\C$, then the pairings (\ref{lefschetz:poincare:pairing}) and (\ref{lefschetz:poincare:pairing:2}) are compatible under the map $\iota$ and the sequence (\ref{hom:mod:symb}).
\end{rem}

\subsection{Modular symbols of weight (2,2)}
Let $\Gamma$ be a torsion free congruence subgroup of $\SL_2(\O_F)$. For $\a,\beta\in \P^1(F)$, let $\{\a,\beta\}$ be the geodesic from $\a$ to $\beta$ in $\ovl{\mathscr{H}_3}$. Let $\{\a,\beta\}_\Gamma$ be the image of $\{\a,\beta\}$ in $X_\Gamma$. Then we can consider $\{\a,\beta\}_\Gamma$ to be an element of $H_1(X_\Gamma,C_\Gamma,\Z)$. The classes $\{\a,\beta\}_\Gamma$ are called \emph{modular symbols} of level $\Gamma$ and weight $(2,2)$. We have the following relations on the modular symbols:
\begin{itemize}
\item[(1)]
$\{\a,\a\}_\Gamma=0$ in $H_1(X_\Gamma,C_\Gamma,\Z)$ for $\a\in\P^1(F)$.
\item[(2)]
$\{\a_1,\a_2\}_\Gamma=\{\a_1,\a_3\}_\Gamma+\{\a_3,\a_2\}_\Gamma$ for $\a_1,\a_2,\a_3\in\P^1(F)$.
\item[(3)]
$\{\a,\beta\}_\Gamma=\{\gamma\a,\gamma\beta\}_\Gamma$ for $\a,\beta\in\P^1(F)$ and $\gamma\in\Gamma$. 
\end{itemize}
By the definition of the relative homology group, $H_1(X_\Gamma,C_\Gamma,\Z)$ is generated by the symbols $\{\a,\beta\}_\Gamma$. 
From the exact sequence (\ref{hom:mod:symb}), we have
$\ker(\partial)=H_1(X_\Gamma,\Z)$, where $\partial$ is the boundary map defined by 
$$
\partial:H_1(X_\Gamma,C_\Gamma,\Z)\rightarrow H_0(C_\Gamma,\Z),\ \{\a,\beta\}_\Gamma\mapsto\{\a\}_\Gamma-\{\beta\}_\Gamma.
$$ 
Note that we have the following map:
$$
\text{pr}_\Gamma:\Gamma\rightarrow H_1(X_\Gamma,\Z),\ \gamma\mapsto\{0,\gamma\cdot 0\}_\Gamma.
$$
Then $\text{pr}_\Gamma$ is a surjective group homomorphism, which is trivial on the parabolic subgroups $\Gamma_s$ of $\Gamma$.

\subsection{Special modular symbols}\label{sp:mod:symb}
For each $1\leq i\leq h_F$, let us denote 
$$
\Gamma_1^i(\fN):=\GL_2(F)\cap t_i\GL_2(\C)U_1(\fN)t_i^{-1},\ \ovl{\Gamma_1^i(\fN)}:=\SL_2(F)\cap\Gamma_1^i(\fN),
$$ 
which are congruence subgroups of $\GL_2(\O_F)$ and $\SL_2(\O_F)$, respectively.
Since $[\Z:\fN\cap\Z]>3$, we observe that $\ovl{\Gamma_1^i(\fN)}$ has no torsion element (see Urban \cite[Lemme 2.3.1]{urban1995formes}). Therefore, for $\Gamma=\ovl{\Gamma_1^i(\fN)}$, $Y_\Gamma$ is a manifold, not an orbifold.
Let us denote $Y_1^i(\fN):=Y_\Gamma$ and $X_1^i(\fN):=X_\Gamma$ for $\Gamma=\ovl{\Gamma_1^i(\fN)}$. Let us define the Bianchi modular manifold $Y_1(\fN)$ of level $U_1(\fN)$ by 
$$
Y_1(\fN):=\GL_2(F)\backslash\GL_2(\A_F)/\C^\times\operatorname{U}_2(\C)U_1(\fN).
$$
Let us recall that $\fa_i^{-1}$ is coprime to $\fN$ for each $i$. Then by the strong approximation theorem on $\GL (2)$, we obtain the following isomorphism:
$$
Y_1(\fN)\rightarrow\coprod_{i=1}^{h_F}Y_1^{i}(\fN),\ g=\gamma t_j g^\p h\mapsto \ovl{\Gamma_1^j(\fN)}\frac{g^\p}{\det(g^\p)}\operatorname{SU}_2(\C)\in Y_1^{j}(\fN),
$$
where $\gamma\in\GL_2(F)$, $g^\p\in\GL_2(\C)$, $h\in U_1(\fN)$ and $1\leq j\leq h_F$. 
Note that 
$$
H_1(X_1(\fN),\Z)\cong\bigoplus_{i=1}^{h_F}H_1(X_1^i(\fN),\Z).
$$

Let $\fq$ be an integral ideal of $F$. For $u\in R_\fq$ and $i$, consider the following set:
$$
\{t_i(0;y)t(u):y\in\R^\times_{>0}\}\subset\GL_2(\A_F).
$$
Then we have the following proposition:

\begin{prop}\label{adelic:modular:symbol:reprn} Assume that $\fq$ is coprime to $\fa_i^{-1}$. Then for each $u\in R_\fq$, there is an element $\a_u=\begin{spmatrix} 1 & r_u \\ 0 & 1 \end{spmatrix}\in \SL_2(F)$ and $c^{(i)}_u\in\widehat{\O}_F$ such that $(c^{(i)}_u)_\fq=0$ and
$$
\a_{u}t_i g t(u)=t_i \a_{u,\infty} g t(c^{(i)}_u)
$$
in $\GL_2(\A_F)$ for any $g\in \GL_2(\C)$. 
\end{prop}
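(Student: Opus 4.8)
The plan is to reduce the asserted equality to an identity in $\GL_2(\A_F^{(\infty)})$ and then to exhibit $\a_u$ and $c_u^{(i)}$ by a direct conjugation across the diagonal matrix $t_i$. Since $a_i=\varpi_{\fa_i}$, the embedded element $u$, and any $c_u^{(i)}\in\widehat{\O}_F$ are all finite adeles, the matrices $t_i$, $t(u)$ and $t(c_u^{(i)})$ are trivial at the archimedean place; hence at the infinite place both sides of the claimed equation equal $\a_{u,\infty}g$, and it suffices to produce $\a_u$ and $c_u^{(i)}$ with
\[
\a_u\, t_i\, t(u)=t_i\, t\bigl(c_u^{(i)}\bigr)\qquad\text{in }\GL_2(\A_F^{(\infty)}),
\]
equivalently $t\bigl(c_u^{(i)}\bigr)=t_i^{-1}\a_u t_i\, t(u)$. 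Then the identity will hold for every $g\in\GL_2(\C)$, since $g$ is trivial at all finite places and cancels at the infinite one.

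First I would replace the representatives in $R_\fq$ by global ones. The natural map $\fq^{-1}/\O_F\to\prod_{v\mid\fq}\fq^{-1}\O_{F,v}/\O_{F,v}$ is an isomorphism (by the Chinese remainder theorem; injectivity because an element of $\fq^{-1}$ lying in $\O_{F,v}$ for all $v\mid\fq$ lies in $\O_F$), so every $u\in R_\fq$ is the image of an element of $\fq^{-1}\subset F$, still denoted $u$; the embedding $u\mapsto(0,\dots,u,\dots,0)$ then just forgets the components of this global $u$ away from $\fq$. Now set $r_u:=-u$ and $\a_u:=\bigl(\begin{smallmatrix}1&-u\\0&1\end{smallmatrix}\bigr)\in\SL_2(F)$, which is independent of $i$ as required. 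Since $t_i=\bigl(\begin{smallmatrix}a_i&0\\0&1\end{smallmatrix}\bigr)$ is diagonal, $t_i^{-1}\bigl(\begin{smallmatrix}1&r\\0&1\end{smallmatrix}\bigr)t_i=\bigl(\begin{smallmatrix}1&a_i^{-1}r\\0&1\end{smallmatrix}\bigr)$ for any $r$, so
\[
t_i^{-1}\a_u t_i\, t(u)=t\bigl(-a_i^{-1}u\bigr)t(u)=t\bigl(u-a_i^{-1}u\bigr),
\]
where on the right $u$ is the finite adele supported at $\fq$ and $a_i^{-1}u$ is the finite adele whose $v$-component is $(a_i)_v^{-1}u$. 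Thus the proposition holds with $c_u^{(i)}:=u-a_i^{-1}u$, once I check $c_u^{(i)}\in\widehat{\O}_F$ and $(c_u^{(i)})_\fq=0$.

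This verification is the only place the hypotheses are used, and it is done place by place. For $v\mid\fq$: coprimality of $\fq$ with the integral ideal $\fa_i^{-1}$ gives $\operatorname{ord}_v(\fa_i)=0$, so $(a_i)_v=1$ and the $v$-component of $c_u^{(i)}$ is $u-u=0$; in particular $(c_u^{(i)})_\fq=0$. For $v\nmid\fq$: the finite adele $u$ has trivial $v$-component, so the $v$-component of $c_u^{(i)}$ is $-(a_i)_v^{-1}u$; here $u\in\fq^{-1}$ and $v\nmid\fq$ force $u\in\O_{F,v}$, while $\fa_i^{-1}\subseteq\O_F$ forces $\operatorname{ord}_v(\fa_i)\le 0$, hence $(a_i)_v^{-1}=\varpi_v^{-\operatorname{ord}_v(\fa_i)}\in\O_{F,v}$, so the $v$-component lies in $\O_{F,v}$. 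Therefore $c_u^{(i)}\in\prod_v\O_{F,v}=\widehat{\O}_F$ with vanishing $\fq$-part, which finishes the proof.

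As for the main difficulty: there is nothing deep here, the content being bookkeeping of adelic components; the one point that must not be overlooked is that $r_u$ is forced to be a genuine element of $F$ (it is pinned to $-u$ by the requirement $(c_u^{(i)})_\fq=0$ together with $(a_i)_v=1$ for $v\mid\fq$), which is precisely why one first arranges $R_\fq$ to consist of global representatives in $\fq^{-1}$; the integrality of $c_u^{(i)}$ at the places away from $\fq$ is then exactly where the integrality of $\fa_i^{-1}$ and its coprimality with $\fq$ enter.
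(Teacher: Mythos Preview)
Your proof is correct and follows essentially the same approach as the paper. The paper defines $r_u$ as a representative in $\fq^{-1}$ of the coset in $(\fq^{-1}/\O_F)^\times$ corresponding to $-\varpi_\fq u\in(\widehat{\O}_F/\widehat{\fq})^\times$, which amounts exactly to your choice $r_u=-u$ after first lifting $u$ to a global element of $\fq^{-1}$; the resulting $c_u^{(i)}=u+r_u^{(\infty)}a_i^{-1}$ in the paper is your $\tilde u - a_i^{-1}u$, and the place-by-place verification that it lies in $\widehat{\O}_F$ with vanishing $\fq$-part is identical.
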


\begin{proof} Choose an element $r_u$ of $(\fq^{-1}/\O_F)^\times$ corresponding to the element $-\varpi_{\fq} u \in (\widehat{\O}_F/\widehat{\fq})^\times$
under the following well-defined bijection: 
$$
(\fq^{-1}/\O_F)^\times\rightarrow (\widehat{\O}_F/\widehat{\fq})^\times,\ r+\O_F\mapsto \varpi_{\fq}r_{\fq}+\widehat{\fq}.
$$
Then we have the equation as follows:
$$
\begin{pmatrix} 1 & r_u \\ 0 & 1 \end{pmatrix}t_i(0;y)t(u)=t_i\begin{pmatrix} 1 & r_u \\ 0 & 1 \end{pmatrix}_\infty(0;y)\begin{pmatrix} 1 & u+r_u^{(\infty)}a_i^{-1} \\ 0 & 1 \end{pmatrix}.
$$
Note that $(u+r_u^{(\infty)} a_i^{-1})_v=0$ for each $v|\fq$, and $(u+r_u^{(\infty)} a_i^{-1})_v\in\O_v$ for each $v$ coprime to $\fq$. So we conclude the proof. 
\end{proof}

\begin{rem} Note that we can observe that $\a_u$ and $r_u$ does not depend on $i$. For $u\in R_\fq$, the set $\{t_i(0;y)t(u):y\in\R^\times_{>0}\}$ represents the homology class $\{\infty,r_{u}\}_{\ovl{\Gamma^i_1(\fN)}}=\a_{u,\infty}\{\infty,0\}_{\ovl{\Gamma^i_1(\fN)}}$ by the Proposition \ref{adelic:modular:symbol:reprn}. So from now on, let us denote by $\xi^{(i)}_u$ the class $\{\infty,r_{u}\}_{\ovl{\Gamma^i_1(\fN)}}$ for $u\in R_\fq$ if $\fq$ is coprime to $\fa_i^{-1}$. 
\end{rem}

Let $\mathfrak{l}$ be a prime ideal of $F$ 
coprime to $\fa_i^{-1}$ for each $i$.
For an integer $n>0$ and an element $a\in\O_{F,\fl}^\times$, let us define the homology class $\xi^{(i)}_{\fl,n}(a)$ by
$$
\xi^{(i)}_{n,\fl}(a):=\xi_{a/\varpi_\fl^{n}}^{(i)}\in H_1(X_1^i(\fN),C_1^i(\fN),\Z),
$$
or simply denote by $\xi^{(i)}_{n}(a)$ if there is no ambiguity on $\fl$. 
We consider the submodule $M_{n,\fl}$ of $H_1(X_1(\fN),\Z)$, which is defined by
$$
M_{n,\fl}:=\bigoplus_{i=1}^{h_F}\langle\xi_n^{(i)}(a):a\in \O_{F,\fl}^\times\rangle\cap H_1(X_1(\fN),\Z).
$$
Note that $H_1(X_1(\fN),\Z)$ is a finite generated abelian group since $X_1(\fN)$ is a finite dimensional compact manifold.
In section \ref{section:full:rank}, we will show that $M_{n,\fl}$ is of full-rank in $H_1(X_1(\fN),\Z)$ for prime ideals $\fl$ of sufficiently large norm $N(\fl)$.

\subsection{Generalized Eichler-Shimura isomorphism}\label{Eich:shi:isom:poin:dual}
Let us follow Hida \cite{hida1994critical} to define the generalized Eichler-Shimura map. Let $S_2\big(\Gamma_1^i(\fN)\big)$ be the space of cusp forms on $\GL_2(\C)$ of weight $(2,2)$ and level $\Gamma_1^i(\fN)$ (for the definition, see Hida \cite[Section 2]{hida1994critical}). For $f\in S_2\big(\Gamma_1^i(\fN)\big)$, define a function $f^\p$ on $\SL_2(\C)$ by 
$$
f^\p(g,\mathbf{a}):=\begin{pmatrix} f_0(g) & f_1(g) & f_2(g) \end{pmatrix}\Psi\big({}^t j\big(g,(0,1)\big)\mathbf{a}\big),
$$
where $\mathbf{a}=\begin{spmatrix} A \\ B \end{spmatrix}$, $\Psi(\mathbf{a})={}^t\begin{pmatrix} A^2 & AB & B^2 \end{pmatrix}$ and
$$
j\bigg(\begin{pmatrix} a & b \\ c & d \end{pmatrix},(x,y)\bigg):=\begin{pmatrix} cx+d & -cy \\ \ovl{c}y & \ovl{cx+d} \end{pmatrix}
$$
for $(x,y)\in\mathscr{H}_3$.
Let us define a differential $1$-form $\delta^{(i)}(f)$ on $Y_1^i(\fN)$ by 
$$
\delta^{(i)}(f)\big((x,y)\big):=f^\p\big((x;y),\mathbf{a}\big)|_{A^2=dx,\ AB=-dy,\ B^2=-d\ovl{x}},
$$
where $x,y,\ovl{x}$ are the local coordinates on $Y_\Gamma$. Note that we have following: 
$$
\ovl{\Gamma_1^i(\fN)}\backslash\SL_2(\C)/\SU_2(\C)\cong Y_1^i(\fN),\ \ovl{\Gamma_1^i(\fN)} g\SU_2(\C)\mapsto\ovl{\Gamma_1^i(\fN)}\big(g\cdot(0,1)\big).
$$ 
Then we obtain the \emph{generalized Eichler-Shimura isomorphism}:
\begin{prop}\label{eichler:shimura:harder} For $1\leq i\leq h_F$, the map $\delta^{(i)}$ is a Hecke equivariant isomorphism of $\C$-vector spaces:
$$
\delta^{(i)}:S_2\big(\Gamma_1^i(\fN)\big)\rightarrow H^1_{\operatorname{par}}(Y_1^i(\fN),\C)\cong H^1(Y_1^i(\fN),\C).
$$
\end{prop}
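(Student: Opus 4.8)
The plan is to establish the generalized Eichler--Shimura isomorphism in three stages: (i) verify that $\delta^{(i)}(f)$ is a well-defined harmonic (hence closed) $1$-form on $Y_1^i(\fN)$, so that $\delta^{(i)}$ genuinely lands in $H^1(Y_1^i(\fN),\C)$; (ii) check that its image lies in the parabolic part $H^1_{\mathrm{par}}(Y_1^i(\fN),\C)$, equivalently that $\delta^{(i)}(f)$ restricts to zero on the boundary $2$-tori of the Borel--Serre compactification; and (iii) prove bijectivity by a dimension count combined with injectivity via a non-degenerate Petersson-type pairing. For step (i) the point is that the function $f^\p$ transforms under $\ovl{\Gamma_1^i(\fN)}$ in exactly the way prescribed by the automorphy factor $j(g,(x,y))$ and the action on $L_2(\C)$, so the substitution $A^2=dx$, $AB=-dy$, $B^2=-d\ovl x$ produces a genuine $\SL_2(\C)$-invariant $1$-form on $\mathscr H_3$ descending to $Y_1^i(\fN)$; that it is closed follows from the Casimir eigenvalue condition (1) in Definition~\ref{adel:cuspform:defn}, which is precisely the harmonicity equation for the associated form. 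This is standard Hodge theory on $\mathscr H_3$ going back to Harder and Hida \cite{hida1994critical}, so I would invoke it rather than rederive it.

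For step (ii), I would use the Fourier--Whittaker expansion of Proposition~\ref{four:whit:exp:hida}: near a cusp, write $f$ in its expansion and observe that the constant term vanishes (condition (3) of Definition~\ref{adel:cuspform:defn}, the cuspidality), while the non-constant terms involve the Bessel functions $K_{m+1-k}(4\pi|\xi y_\infty|)$, which decay exponentially as one approaches the boundary. Hence $\delta^{(i)}(f)$ extends to the Borel--Serre compactification $Y_1^{i}(\fN)^*$ with zero restriction to each boundary torus $\partial Y_1^i(\fN)^*$, placing its class in the kernel of $H^1(Y_1^i(\fN),\C)\to H^1(\partial Y_1^i(\fN)^*,\C)$, which by the exact sequence~(\ref{cohom:mod:symb:5})--(\ref{cohom:mod:symb}) and the isomorphism $H^1_{\mathrm{par}}(Y_1^i(\fN),\C)\cong H^1(X_1^i(\fN),\C)$ is exactly $H^1_{\mathrm{par}}$. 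Hecke equivariance is a formal check: the Hecke operators act compatibly on $S_2(\Gamma_1^i(\fN))$ and on cohomology through double cosets, and $\delta^{(i)}$ is built from the automorphy factor in a Hecke-compatible way.

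For step (iii), injectivity follows because the $L^2$-norm of the harmonic form $\delta^{(i)}(f)$ equals (a nonzero constant times) the Petersson norm of $f$, so $\delta^{(i)}(f)=0$ forces $f=0$; alternatively one pairs $\delta^{(i)}(f)$ against $\overline{\delta^{(i)}(f)}$ via the cup product and Poincar\'e duality on $Y_1^i(\fN)$. Surjectivity then follows from a dimension comparison: by Hodge theory on the (three-dimensional) hyperbolic manifold, $H^1_{\mathrm{par}}(Y_1^i(\fN),\C)$ is computed by square-integrable harmonic $1$-forms, and every such form arises from a vector-valued cusp form of weight $(2,2)$ by reversing the construction of $\delta^{(i)}$ (unwinding the substitution $A^2=dx$ etc. recovers the components $f_0,f_1,f_2$, and the closedness/coclosedness of the form forces the Casimir and cuspidality conditions). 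Thus $\delta^{(i)}$ is a bijection. I expect step (iii), specifically pinning down that the parabolic cohomology is \emph{exactly} the span of classes coming from cusp forms --- with no contribution from residual or Eisenstein-type square-integrable forms in this weight --- to be the main subtlety; here one uses that for parallel weight $(2,2)$ over an imaginary quadratic field the only relevant automorphic contributions to $H^1$ are cuspidal (Eisenstein classes are accounted for separately in the boundary $H^1(\partial Y^*_\Gamma)$), which is again part of Hida's analysis in \cite{hida1994critical} and can be cited.
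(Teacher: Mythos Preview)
Your outline is a correct reconstruction of the underlying argument for the generalized Eichler--Shimura isomorphism over imaginary quadratic fields (essentially Harder's theory as packaged by Hida), and each of your three stages is sound as sketched. The paper, however, does not reprove any of this: its entire proof is a direct citation of Hida \cite[Corollary 2.2, Proposition 3.1]{hida1994critical}. So while your approach is mathematically the ``right'' one if one were writing this from scratch, it is far more elaborate than what the paper actually does, which is simply to invoke Hida's result as a black box. In that sense there is no genuine comparison to make beyond noting that you have unpacked a citation.

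One small caution: you focus (correctly) on showing $\delta^{(i)}$ is an isomorphism onto $H^1_{\mathrm{par}}$, but the displayed statement also asserts $H^1_{\mathrm{par}}(Y_1^i(\fN),\C)\cong H^1(Y_1^i(\fN),\C)$. You do not address this final isomorphism, and in fact it is not true as written for Bianchi manifolds in general --- there is nontrivial Eisenstein cohomology in $H^1(Y_1^i(\fN),\C)$ not lying in the parabolic part. The paper's own earlier discussion has $H^1_{\mathrm{par}}(Y_\Gamma,\C)\cong H^1(X_\Gamma,\C)$ (with $X$, not $Y$), so this is likely a typo in the statement rather than a gap in your proposal; but it is worth flagging since your step (iii) correctly distinguishes the cuspidal contribution from the boundary/Eisenstein part.
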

\begin{proof} By Hida \cite[Corollary 2.2, Proposition 3.1]{hida1994critical}, we are done.
\end{proof}
Note that for $(x,y)\in Y_1^i(\fN)$, we have
\begin{equation}\label{eichler:shimura:explicit} \delta^{(i)}(f)\big((x,y)\big)=f_0\big((x;y)\big)\frac{dx}{y}-f_1\big((x;y)\big)\frac{dy}{y}-f_2\big((x;y)\big)\frac{d\ovl{x}}{y}.
\end{equation}

Let us denote by $S_2\big(U_1(\fN)\big)$ the space of cusp forms on $\GL_2(\A_F)$ of weight $(2,2)$ and level $U_1(\fN)$ (for the definition, see Hida \cite[Section 3]{hida1994critical}). For $f\in S_2\big(U_1(\fN)\big)$, let $f_{(i)}$ be the function on $\GL_2(\C)$ defined by $f_{(i)}(g):=f(t_i g)$. Then we have the following isomorphism:
$$
S_2\big(U_1(\fN)\big)\cong\bigoplus_{i=1}^{h_F}S_2\big(\Gamma_1^i(\fN)\big),\ f\mapsto (f_{(i)})_{i=1}^{h_F}.
$$ 
Then for a cusp form $f\in S_2\big(U_1(\fN)\big)$, we have
\begin{equation}\label{integral:evaluation} 
\xi_{u}^{(i)}\cap\iota\big(\delta^{(i)}(f_{(i)})\big)=\int_0^\infty f_1\big(t_i(r_{u};y)\big)\frac{dy}{y}=\int_0^\infty f_1\big(t_i(0;y)t(u)\big)\frac{dy}{y}
\end{equation}
by Proposition \ref{adelic:modular:symbol:reprn} and the equation (\ref{eichler:shimura:explicit}).

\section{Lattice estimation}
In this section, we estimate the number of the elements in the lattices of bounded norm. Also we find the elements of the smallest norm in certain lattices.

Let $n>0$ be an integer and $\mathfrak{l}$ a prime ideal of $F$. Then we have the following lemma:
\begin{lem}\label{lattice:size:esti} For each $\beta\in\O_F$, we have 
\begin{align*}
\#\{\a\in\beta+\fl^{n}:\ N(\a)\leq x\} \ll_{F}\max\Big\{\frac{x}{N(\fl)^n},1\Big\}.
\end{align*}
\end{lem}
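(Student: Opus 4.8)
The plan is to reduce the counting problem to a lattice-point count in $\R^2$ (via the standard identification $\O_F \cong \Z^2$) and then invoke a Lipschitz-type estimate for the number of lattice points of a (rescaled) lattice inside a region of bounded area. Since $F$ is imaginary quadratic, the norm form $N(\a) = |\a|^2$ under the archimedean embedding $F \hookrightarrow \C$, so $\{\a \in F : N(\a) \leq x\}$ corresponds, after fixing an $\R$-linear isomorphism $\C \cong \R^2$, to the disc $D_{\sqrt{x}}$ of radius $\sqrt{x}$ centered at the origin. Thus the count in the lemma is exactly the number of points of the translated lattice $\beta + \fl^n$ lying in $D_{\sqrt{x}}$, where I view $\fl^n \subset \O_F$ as a sublattice of the full lattice $\O_F \subset \C$ of index $N(\fl)^n$ (equivalently, covolume $\operatorname{covol}(\O_F) \cdot N(\fl)^n$, and $\operatorname{covol}(\O_F) = \tfrac{1}{2}\sqrt{|D_F|}$).

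The key step is the following elementary fact: for a lattice $L \subset \R^2$ whose shortest nonzero vector has length $\lambda_1$ and whose covolume is $V$, and for any translate $v + L$, one has
\begin{equation*}
\#\big((v+L) \cap D_r\big) \ll \frac{\pi r^2}{V} + \frac{r}{\lambda_1} + 1.
\end{equation*}
I would prove this by the usual packing argument: place a fundamental parallelogram around each lattice point in $(v+L)\cap D_r$; these are disjoint, contained in $D_{r + O(\lambda_1 + V/\lambda_1)}$, wait — more cleanly, each fundamental domain has diameter $\ll \lambda_1 + V/\lambda_1$, but in dimension $2$ with $L = \fl^n$ it is cleaner still to note that successive minima satisfy $\lambda_1 \lambda_2 \asymp V$, so the diameter of a fundamental cell is $\ll \lambda_2 \ll V/\lambda_1$. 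Hence the cells around points of $(v+L)\cap D_r$ are pairwise disjoint and all lie in $D_{r + c V/\lambda_1}$, giving $\#((v+L)\cap D_r) \cdot V \leq \pi(r + cV/\lambda_1)^2$, and expanding yields the displayed bound.

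Now I specialize: $L = \fl^n$ has covolume $V = \tfrac12\sqrt{|D_F|}\,N(\fl)^n$ and shortest vector $\lambda_1 = \min_{0 \neq \a \in \fl^n}|\a| = \sqrt{\min_{0\neq\a\in\fl^n} N(\a)} \geq \sqrt{N(\fl^n)} = N(\fl)^{n/2}$ (since every nonzero element of $\fl^n$ has norm divisible by $N(\fl)^n$). Plugging $r = \sqrt{x}$ into the estimate above gives
\begin{equation*}
\#\{\a \in \beta + \fl^n : N(\a) \leq x\} \ll_F \frac{x}{N(\fl)^n} + \frac{\sqrt{x}}{N(\fl)^{n/2}} + 1 = \Big(\frac{\sqrt{x}}{N(\fl)^{n/2}} + 1\Big)^2 \ll_F \max\Big\{\frac{x}{N(\fl)^n}, 1\Big\},
\end{equation*}
where the last step uses $(a+1)^2 \ll \max\{a^2,1\}$ for $a \geq 0$. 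This is the claimed bound; all implied constants depend only on $F$ (through $|D_F|$ and the absolute constant in the packing argument). The main obstacle is simply making the lattice-packing estimate uniform in the lattice $\fl^n$ — i.e. tracking that the shortest-vector bound $\lambda_1 \geq N(\fl)^{n/2}$ and the covolume scale correctly so that the "error" terms $\sqrt{x}/N(\fl)^{n/2}$ and $1$ are genuinely absorbed into $\max\{x/N(\fl)^n, 1\}$; everything else is routine. (In the general number field case alluded to in the paper's "Lattice estimation" remark this is exactly where the indefiniteness of the norm form would cause trouble, but here the norm is a definite quadratic form and there is no such issue.)
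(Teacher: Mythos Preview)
Your argument is correct. You give a direct, self-contained geometry-of-numbers count in $\R^2$: identify $\fl^n\subset\C$ as a rank-two lattice of covolume $\asymp_F N(\fl)^n$ and shortest vector $\geq N(\fl)^{n/2}$, and apply the standard packing bound $\#\big((v+L)\cap D_r\big)\ll r^2/V+r/\lambda_1+1$ to get $x/N(\fl)^n+\sqrt{x}/N(\fl)^{n/2}+1\asymp\max\{x/N(\fl)^n,1\}$. The paper instead proceeds by citation: it invokes Kwon--Sun \cite[Proposition~5.2]{kwon2020nonvanishing}, which rests on Rohrlich's coherent cone decomposition of the Minkowski space and is formulated for general number fields, and then observes that the finiteness of $\O_F^\times$ in the imaginary quadratic case makes the conclusion immediate. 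Your route is more elementary and avoids external input, exploiting precisely the feature the paper flags in its ``Lattice estimation'' discussion---that for imaginary quadratic $F$ the norm form is positive definite, so the count reduces to disc-counting for a Euclidean lattice. The paper's route, by contrast, imports machinery that would survive in the totally real or general setting where your packing argument (as written) would not directly apply.
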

\begin{proof}
By Kwon-Sun \cite[Proposition 5.2]{kwon2020nonvanishing}, we can conclude the proof since the size of the unit group $\O_F^\times$ is finite. Note that \cite[Proposition 5.2]{kwon2020nonvanishing} comes from Rohrlich \cite{Rohrlich}, which is about the coherent cone decomposition of lattices in the Minkowski space of $F$.
\end{proof}


For $\beta\in\O_{F}$, let $[\beta]_{n,\fl}$ be the set consists of the elements $\a\in\beta+\fl^{n}$ such that 
$$
N(\a)=\min_{\a_1\in\beta+\fl^{n}}N(\a_1).
$$ 
We show that $[\beta]_{n,\fl}$ is a singleton set for sufficiently large $N(\fl)^n$:


\begin{lem}\label{suff:large:n:identity}
Let $\beta\in\O_F$ such that $4N(\beta)< N(\fl)^{n}$. Then we have $[\beta]_{n,\fl}=\{\beta\}$. 
\end{lem}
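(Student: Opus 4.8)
The claim is that if $\beta \in \O_F$ satisfies $4N(\beta) < N(\fl)^n$, then the unique element of smallest norm in the coset $\beta + \fl^n$ is $\beta$ itself. Since $F$ is imaginary quadratic, the norm map is the square of the usual complex absolute value under any fixed embedding $F \hookrightarrow \C$, so $N(\alpha) = |\alpha|^2$. Write $\alpha = \beta + \lambda$ with $\lambda \in \fl^n \subseteq \O_F$, $\lambda \neq 0$. I want to show $N(\alpha) > N(\beta)$, i.e. $|\beta + \lambda|^2 > |\beta|^2$, whenever $\lambda \neq 0$. Expanding, $|\beta + \lambda|^2 = |\beta|^2 + 2\,\mathfrak{R}(\beta\overline{\lambda}) + |\lambda|^2$, so it suffices to show $|\lambda|^2 > -2\,\mathfrak{R}(\beta\overline{\lambda})$, and since $-2\mathfrak{R}(\beta\overline\lambda) \le 2|\beta||\lambda|$, it is enough that $|\lambda|^2 > 2|\beta||\lambda|$, i.e. $|\lambda| > 2|\beta|$.

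So the heart of the matter is the lower bound $|\lambda| > 2|\beta|$ for every nonzero $\lambda \in \fl^n$. First I would observe that any nonzero $\lambda$ in the integral ideal $\fl^n$ has $N(\lambda) \geq N(\fl^n) = N(\fl)^n$: indeed $\fl^n \mid (\lambda)$ as ideals, hence $N(\fl)^n = N(\fl^n)$ divides $N((\lambda)) = N(\lambda)$, and in particular $N(\lambda) \ge N(\fl)^n$ since $N(\lambda) > 0$. Translating back via the norm-as-squared-modulus identity, $|\lambda|^2 = N(\lambda) \ge N(\fl)^n$. Now invoke the hypothesis: $N(\fl)^n > 4N(\beta) = 4|\beta|^2$, so $|\lambda|^2 > 4|\beta|^2$, hence $|\lambda| > 2|\beta|$, which is exactly what was needed above. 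Combining, $N(\alpha) > N(\beta)$ for all $\alpha \in \beta + \fl^n$ with $\alpha \neq \beta$, so $[\beta]_{n,\fl} = \{\beta\}$.

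**Main obstacle.** There is essentially no obstacle here — the proof is a one-line triangle-inequality estimate once one uses that $N$ is a positive-definite quadratic form on $\O_F$ (this is the place where "$F$ imaginary quadratic" is used, exactly as the Lattice estimation subsection in the introduction flags: for real quadratic fields the norm form is indefinite and this clean argument fails). The only mild care needed is the bookkeeping step $N(\lambda) \ge N(\fl)^n$ for $0 \neq \lambda \in \fl^n$, which is the standard fact that the norm of a nonzero element of an integral ideal $\fa$ is a multiple of $N(\fa)$; it could also be phrased as $\fl^n \subseteq \frac{1}{?}$-type containments, but the ideal-divisibility phrasing is cleanest. One should also double-check the degenerate edge case $\beta = 0$: then the hypothesis reads $0 < N(\fl)^n$, trivially true, and $[\,0\,]_{n,\fl} = \{0\}$ holds since $\fl^n \neq (0)$. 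I would write the proof in the order: reduce to proving $|\lambda| > 2|\beta|$; prove $N(\lambda) \ge N(\fl)^n$; conclude via the hypothesis.
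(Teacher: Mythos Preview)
Your proof is correct and follows essentially the same approach as the paper's: both use that a nonzero element of $\fl^n$ has norm at least $N(\fl)^n$ together with the triangle inequality in $\C$ (via $N(\cdot)=|\cdot|^2$ for imaginary quadratic $F$). The only cosmetic difference is that the paper argues by contradiction---assuming a minimizer $\beta_1\neq\beta$ and bounding $N(\beta-\beta_1)\leq 4N(\beta)$---whereas you show directly that every $\alpha\neq\beta$ in the coset has $N(\alpha)>N(\beta)$.
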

\begin{proof}
By the definition, the set $[\beta]_{n,\fl}$ is non-empty. Thus we can choose an element $\beta_1\in[\beta]_{n,\fl}$. Let us assume that $\beta_1\neq\beta$. Note that we have $0\neq\beta_1-\beta\in\fl^{n}$ and $N(\beta_1)\leq N(\beta)$ by the definition of $[\beta]_{n,\fl}$.
Then by the triangle inequality, we obtain
$$
N(\fl)^{n}\leq N(\beta-\beta_1)\leq (N(\beta)^\frac{1}{2}+N(\beta_1)^\frac{1}{2})^2\leq 4N(\beta) < N(\fl)^{n},
$$ 
which is a contradiction. Therefore, $[\beta]_{n,\fl}=\{\beta\}$, which concludes the proof.
\end{proof}


\section{Generation of homology groups}\label{section:full:rank}


\subsection{Estimation of integrals along special modular symbols}

Let $n>0$ be an integer and $\fl$ a prime ideal of $F$ whose norm $N(\fl)$ is coprime to $\fa_i^{-1}\fN$ for each $i$. Let us denote $\ell:=N(\fl)$.
In this section, we prove that the module $M_{n,\fl}$ defined in Section \ref{section:conj:on:mod:symb} is of full-rank in $H_1(X_1(\fN),\Z)$ for sufficiently large $\ell^n$. Before proving the generation result, we need the following lemma about the estimation on the Kloosterman-like exponential sum:

\begin{lem}\label{exp:sum} Le $v>0$ be an integer such that $v\leq\lceil\frac{n}{2}\rceil$. For $b,c\in\O_{F,\fl}$ with $v\leq n-\lceil\frac{n}{2}\rceil-\min\{v_\fl(b),v_\fl(c)\}$, we have
\begin{equation}\label{exp:sum:esti}
\sum_{a\equiv a_0 (\fl^v)}\mathbf{e}_F\bigg(\frac{ab+a^\p c}{\varpi_\fl^n}\bigg)\leq 2\ell^{\lceil\frac{n}{2}\rceil+\min\{v_\fl(b),v_\fl(c)\}},
\end{equation}
where $a^\p$ is the mod $\fl^n$ inverse of $a$.
\end{lem}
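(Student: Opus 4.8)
The plan is to reduce the sum over the residue class $a \equiv a_0 \pmod{\fl^v}$ to a sum over a smaller residue class modulo $\fl^{n-m}$, where $m = \lceil n/2\rceil$, via a second-order Taylor expansion of the map $a \mapsto a^\p$, and then to show that the resulting exponential sum is either a complete Gauss-type sum (of absolute value $\ell^{m/2}$ type after accounting for valuations) or else vanishes. First I would write $a = a_1 + \varpi_\fl^{n-m} t$ with $a_1$ ranging over representatives of $(a_0 + \fl^v\O_{F,\fl})/\fl^{n-m}$ and $t$ ranging over $\O_{F,\fl}/\fl^{m}$; the hypothesis $v \le n - m$ ensures this is a genuine refinement of the original class. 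Since $a_1$ is a unit (as $\fl$ is coprime to everything relevant and $a_0 \in \O_{F,\fl}^\times$), the congruence $a a^\p \equiv 1 \pmod{\fl^n}$ forces $a^\p \equiv a_1^{-1} - a_1^{-2}\varpi_\fl^{n-m} t \pmod{\fl^{n-m}}$, and because $2(n-m) \ge n$ the quadratic term $\varpi_\fl^{2(n-m)} t^2 a_1^{-3}$ is killed by $\varpi_\fl^{-n}$ inside $\mathbf{e}_F$. Hence the summand factors as
\begin{equation*}
\mathbf{e}_F\!\bigg(\frac{a_1 b + a_1^\p c}{\varpi_\fl^n}\bigg)\cdot \mathbf{e}_F\!\bigg(\frac{(b - a_1^{-2}c)\,\varpi_\fl^{n-m} t}{\varpi_\fl^n}\bigg)
= \mathbf{e}_F\!\bigg(\frac{a_1 b + a_1^\p c}{\varpi_\fl^n}\bigg)\cdot \mathbf{e}_F\!\bigg(\frac{(b - a_1^{-2}c)\, t}{\varpi_\fl^{m}}\bigg).
\end{equation*}

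Next I would carry out the inner sum over $t \in \O_{F,\fl}/\fl^m$. This is a complete additive character sum on the finite quotient ring $\O_{F,\fl}/\fl^m$; by orthogonality it equals $\ell^m$ when $b \equiv a_1^{-2} c \pmod{\fl^m}$ and $0$ otherwise (here one uses that $\mathbf{e}_F(\cdot/\varpi_\fl^m)$ restricted to $\fl^{-m}\O_{F,\fl}/\O_{F,\fl}$ is a nondegenerate pairing with $\O_{F,\fl}/\fl^m$, which follows from the defining formula for $\mathbf{e}_F$ and the fact that $\fl$ is unramified-controlled in the relevant sense — more precisely from the self-duality of $\O_{F,\fl}$ against $\fd_{F,\fl}^{-1}$ and coprimality). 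So the whole sum collapses to $\ell^m$ times the number of units $a_1$ in the class $a_0 + \fl^v\O_{F,\fl}$ modulo $\fl^{n-m}$ satisfying $a_1^2 \equiv b/c \pmod{\fl^{m}}$ (interpreting this correctly when $b$ or $c$ has positive valuation). Writing $\mu := \min\{v_\fl(b), v_\fl(c)\}$, one divides through by $\varpi_\fl^\mu$: the congruence becomes $a_1^2 \tilde c \equiv \tilde b \pmod{\fl^{m-\mu}}$ with at least one of $\tilde b, \tilde c$ a unit, and the hypothesis $v \le n - m - \mu$ guarantees that the class mod $\fl^v$ does not already over-determine $a_1$ modulo $\fl^{n-m}$.

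The key step — and the main obstacle — is bounding the number of such $a_1$. A quadratic congruence $a_1^2 \equiv u \pmod{\fl^{m-\mu}}$ over the local ring $\O_{F,\fl}$ has at most $2$ solutions modulo $\fl^{m-\mu}$ (since $\O_{F,\fl}/\fl$ has characteristic $\neq 2$ by the global coprimality hypotheses on $p$ and more basically since $\fl$ has large norm, in particular odd residue characteristic), and hence at most $2\ell^{(n-m)-(m-\mu)} = 2\ell^{n-2m+\mu}$ solutions modulo $\fl^{n-m}$; when one of $\tilde b,\tilde c$ is a nonunit the count only drops. Multiplying the inner-sum value $\ell^m$ by this count gives the bound $2\ell^{m + n - 2m + \mu} = 2\ell^{n-m+\mu}$. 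Since $m = \lceil n/2\rceil$ we have $n - m = \lfloor n/2\rfloor \le \lceil n/2\rceil$, so $2\ell^{n-m+\mu} \le 2\ell^{\lceil n/2\rceil + \mu}$, which is exactly the claimed estimate \eqref{exp:sum:esti}. I expect the delicate points to be (i) checking that the quadratic Taylor remainder really is annihilated, which needs $2(n-m)\ge n$, i.e.\ the choice $m=\lceil n/2\rceil$, and (ii) correctly handling the cases $v_\fl(b) \ne v_\fl(c)$ and the degenerate case $b = c = 0$ (where the bound is trivial since the sum has $\ell^{n-v}$ terms each of modulus $1$ and one checks $n - v \le \lceil n/2\rceil + \min\{v_\fl(b),v_\fl(c)\}$ fails to be the binding constraint — rather one argues directly), together with the bookkeeping of which residue-class refinement is legitimate under the stated inequality on $v$.
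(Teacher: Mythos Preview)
Your approach is essentially the paper's: linearize $a\mapsto a^\p$ via a first-order expansion, kill the inner (linear) variable by orthogonality, and bound the number of roots of the resulting quadratic congruence. The paper uses the multiplicative parametrization $a=a_0\,r(1+\varpi_\fl^{h}d)$ with $r\in(1+\fl^v)/(1+\fl^{h})$, $d\in\O_{F,\fl}/\fl^{\,n-h}$, $h=\lceil n/2\rceil$, rather than your additive one, but the content is identical.

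There is, however, one slip that breaks the argument for odd $n$. You assert that the choice $m=\lceil n/2\rceil$ gives $2(n-m)\ge n$, but in fact $2(n-m)=2\lfloor n/2\rfloor=n-1$ when $n$ is odd, so the quadratic remainder $a_1^{-3}c\,\varpi_\fl^{2(n-m)}t^{2}$ is \emph{not} annihilated modulo $\fl^{n}$ and the inner $t$-sum is no longer a pure linear character sum. The fix is simply to swap floor and ceiling: take $m=\lfloor n/2\rfloor$, so that $n-m=\lceil n/2\rceil$ and $2(n-m)\ge n$ genuinely holds (this is exactly why the paper puts $h=\lceil n/2\rceil$ on the \emph{inner} variable). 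With this choice the inner sum over $t\in\O_{F,\fl}/\fl^{m}$ contributes $\ell^{\lfloor n/2\rfloor}$, the surviving congruence $a_1^{2}b\equiv c\pmod{\fl^{m}}$ becomes, after dividing by $\varpi_\fl^{\mu}$, a quadratic congruence modulo $\fl^{\lfloor n/2\rfloor-\mu}$ with at most $2$ solutions, the hypothesis $v\le n-\lceil n/2\rceil-\mu=\lfloor n/2\rfloor-\mu$ lets each root lift to $\ell^{\lceil n/2\rceil-(\lfloor n/2\rfloor-\mu)}$ values of $a_1$ modulo $\fl^{\lceil n/2\rceil}$, and the product is exactly $2\ell^{\lceil n/2\rceil+\mu}$. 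With this correction your argument is complete and coincides with the paper's.
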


\begin{proof}
Let us denote $\Gamma_n=1+\fl^n\O_{F,\fl}$, $Q(j)=a_0b j+ a_0^\p c j^\p$, $h=\lceil\frac{n}{2}\rceil$, and $w=\min\{v_\fl(b),v_\fl(c)\}$. 
Let us set $d\in\O_{F,\fl}$ and $r\in \Gamma_v$. As the map $a\mapsto\operatorname{\textbf{e}}_F\big(\frac{a}{\varpi_\fl^n}\big)$ is an additive character on $\O_{F,\fl}$, we have
\begin{align*}
Q\big(r(1+\varpi_\fl^h d)\big)
&\equiv a_0 b r+a_0 b r\varpi_\fl^h d+a_0^\p c r^\p\big(1-\varpi_\fl^h d+O(\varpi_\fl^{2h})\big)\\
&\equiv Q(r)+\varpi_\fl^{h}d(a_0 br-a_0^\p cr^\p)\ (\text{mod } \fl^n).
\end{align*}
Note that we have the following group isomorphism:
\begin{align*}\begin{split}
r(1+\varpi_\fl^h d)\in\Gamma_v/\Gamma_n\cong\Gamma_v/\Gamma_h\times\Gamma_h/\Gamma_n\cong\Gamma_v/\Gamma_h\times\O_{F,\fl}/\fl^{n-h}\O_{F,\fl}\ni (r,d).
\end{split}\end{align*}
From the above equations, we can rewrite (\ref{exp:sum:esti}) as
\begin{align}\begin{split}
\sum_{r\in\Gamma_v/\Gamma_h}\textbf{e}_F\bigg(\frac{Q(r)}{\varpi_\fl^n}\bigg)
\sum_{d\ \text{mod } \fl^{n-h}}\textbf{e}_F\bigg(\frac{( a_0 br- a_0^\p c r^\p)d}{\varpi_\fl^{n-h}}\bigg) .
\end{split}\end{align}
By the orthogonality of characters of finite group,
the above equation becomes

\begin{align}\begin{split}\label{exp:sum:esti:3}
\ell^{n-h}\sum_{a_0 br\equiv  a_0^\p cr^\p \ (\text{mod }\fl^{n-h})}\textbf{e}_F\bigg(\frac{Q(r)}{\varpi_\fl^n}\bigg),
\end{split}\end{align}
where $r$ runs over $\Gamma_v/\Gamma_h$.
As $v\leq n-h-w$, we can observe that
$$
\{r\in\Gamma_v/\Gamma_h:\  a_0 br\equiv a_0^\p cr^\p\ (\text{mod }\fl^{n-h})\}\leq \frac{2|\Gamma_1/\Gamma_h|}{\ell^{n-h-w}}= 2\ell^{2h-n+w}.
$$
Hence the absolute value of (\ref{exp:sum:esti:3}) is bounded by $2\ell^{h+w}.$
So we are done.
\end{proof}

We have the following lemma about the Fourier-Whittaker coefficients of the cusp form $W_\fN(f\otimes\psi)$:

\begin{lem}\label{invol:proj:coeff:relation} For $f\in S_2(\fN,\chi)$, we have $a_{W_{\fN}(f\otimes\psi)}(\xi\fa_i)=\psi(a_j)a_{W_{\fN}f}(\xi\fa_i)$, where $j$ is the integer such that $\fa_j=\fa_i^{-1}\fN$ in $\operatorname{Cl}(F)$.
\end{lem}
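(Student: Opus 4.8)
The plan is to unwind the definition of the twist $f\otimes\psi$ in terms of the additive twists $f(gt(u))$ and then to track how the Fricke involution $W_\fN$ interacts with this decomposition. Concretely, recall from (\ref{twisted:cuspform:decomp:eq}) that
$$
f\otimes\psi(g)(\mathbf{x})=G(\psi)^{-1}\psi\big(\det(g)\big)\sum_{u\in R_{\mathfrak{f}(\psi)}}\psi(\varpi_{\mathfrak{f}(\psi)} u)f\big(gt(u)\big)(\mathbf{x}).
$$
Applying $W_\fN$ (which is a right translation by $\begin{spmatrix} 0 & -1 \\ \varpi_\fN & 0 \end{spmatrix}$ twisted by a determinant factor and a power of $|\varpi_\fN|$) and commuting $t(u)=\begin{spmatrix} 1 & u \\ 0 & 1 \end{spmatrix}$ past $\begin{spmatrix} 0 & -1 \\ \varpi_\fN & 0 \end{spmatrix}$, I would identify $W_\fN(f\otimes\psi)$ with a similar additive-twist sum built from $W_\fN f$; the matrix identity $\begin{spmatrix} 1 & u \\ 0 & 1 \end{spmatrix}\begin{spmatrix} 0 & -1 \\ \varpi_\fN & 0 \end{spmatrix}=\begin{spmatrix} u\varpi_\fN & -1 \\ \varpi_\fN & 0 \end{spmatrix}$ is the computational heart here, and one reorganizes it (using that $\psi$ has conductor coprime to $\fN$, and the change of variables $u\mapsto u^{\p}$ on $R_{\mathfrak{f}(\psi)}$ analogous to the one appearing in Proposition \ref{func:eq}) to recover the twist of $W_\fN f$ by $\psi$, up to a scalar depending on $\psi(a_j)$ where $\fa_j=\fa_i^{-1}\fN$.

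An alternative, and probably cleaner, route is to work purely on the level of Fourier--Whittaker coefficients using the two facts already recorded in Section \ref{twisted:cuspforms}: $a_{f\otimes\psi}(\fa)=a_f(\fa)\psi(\fa)$, and the compatibility of $W_\fN$ with the functional equation. From the functional equation (\ref{functional:eq}) (or rather the relation between $a_{W_\fN f}$ and the dual form that it encodes), the coefficient $a_{W_\fN(f\otimes\psi)}(\fb)$ for an integral ideal $\fb$ can be expressed through $a_{f\otimes\psi}$ evaluated at the ``complementary'' ideal $\fb^{-1}(\fN\cap\mathfrak{f}(\psi)^2)$, together with Gauss-sum and norm factors. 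Substituting $a_{f\otimes\psi}=a_f\cdot\psi$ and comparing with the same expression for $a_{W_\fN f}(\fb)$, the entire $\psi$-dependence collapses to the value of $\psi$ on the ideal class by which the two complementary ideals differ, namely $\psi(\fa_i^{-1}\fN)=\psi(a_j)$ (using $\psi$ Hilbert, hence unramified, so $\psi(\fa_i^{-1}\fN)=\psi(\fa_i)^{-1}\psi(\fN)$ and tracking these through the statement's normalization). I would present the second route as the main argument and relegate the matrix bookkeeping to a remark.

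The step I expect to be the main obstacle is bookkeeping the normalization constants correctly: the Gauss sum $G(\psi)$, the factor $\omega_\chi(\varpi_\fq)$ appearing in (\ref{functional:eq}), the powers of $|\varpi_\fN|_{\A_F}$ and $|\varpi_\fq|_{\A_F}$ hidden in the completed $L$-function, and the sign $(-1)^{k-1}$ in the definition of $W_\fN$. Getting all of these to cancel so that only the clean factor $\psi(a_j)$ survives—with no stray power of $N(\fl)$ or $G(\psi)$—is where an error is most likely, especially since $\psi$ being a Hilbert (unramified) character means $G(\psi)$ is essentially trivial but the formulas were written for general conductor. I would double-check the final constant by specializing to $h_F=1$, $\psi$ trivial, where the claim degenerates to the tautology $a_{W_\fN f}(\xi\O_F)=a_{W_\fN f}(\xi\O_F)$, and by comparing with the classical elliptic-modular analogue of the identity $W_N(f\otimes\psi)=(\text{scalar})\cdot (W_N f)\otimes\psi$.
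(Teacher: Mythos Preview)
Your two proposed routes are both more elaborate than what is needed, and the one you favor (the second) has a genuine gap.

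The key observation you partially note but do not exploit is that $\psi\in H_F$ is a \emph{Hilbert} character: it is unramified everywhere, so its conductor is $\O_F$, the sum over $u\in R_{\mathfrak{f}(\psi)}$ in (\ref{twisted:cuspform:decomp:eq}) collapses to a single term, $G(\psi)=1$, and the twist formula becomes simply
\[
(f\otimes\psi)(g)=\psi\big(\det(g)\big)f(g).
\]
With this in hand the paper's argument is a two-line computation: plugging into the definition of $W_\fN$ (and using that the central character of $f\otimes\psi$ is $\chi\psi^2$) one finds
\[
W_\fN(f\otimes\psi)(g)=\psi^{-1}\big(\det(g)\big)\psi(\varpi_\fN)\,W_\fN f(g),
\]
and reading off Fourier--Whittaker coefficients via Proposition \ref{four:whit:exp:hida} at $y$ with $y\fd_F=\fa_i$ gives the factor $\psi(a_i^{-1}\varpi_\fN)=\psi(a_j)$. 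No matrix commutation with $t(u)$, no change of variables $u\mapsto u'$, no Gauss sums to cancel.

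Your Route 1 would in fact degenerate to exactly this once you notice the sum over $u$ is trivial, so it is correct but dressed up in machinery that evaporates. Your Route 2, however, is problematic as stated: the functional equation (\ref{functional:eq}) relates completed $L$-functions, not individual Fourier coefficients, and there is no general identity expressing $a_{W_\fN f}(\fb)$ through $a_f$ at a ``complementary'' ideal unless $f$ is a newform (in which case $W_\fN f$ is a scalar multiple of a conjugate of $f$). Since the lemma is asserted for arbitrary $f\in S_2(\fN,\chi)$, that route does not go through without an additional reduction.
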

\begin{proof} Note that $f\otimes\psi\in S_2(\fN,\chi\psi^2)$. By the definition of $W_\fN f$ and $f\otimes\psi$, we have
$$
W_{\fN}(f\otimes\psi)(g)=\psi^{-1}\big(\det(g)\big)\psi(\varpi_\fN)W_\fN f(g).
$$
Hence we obtain $a_{W_{\fN}(f\otimes\psi)}(\xi\fa_i)=\psi(a_i^{-1}\varpi_{\fN})a_{W_{\fN}f}(\xi\fa_i)$ for $\xi\in F^\times$ by the Proposition \ref{four:whit:exp:hida} and the equation (\ref{integral:rep}).
So we conclude the proof.
\end{proof}

Let us recall that $H_F$ is the group of Hilbert characters of $F$.
For $f\in S_2(\fN,\chi)$, we obtain
\begin{align}\label{special:Lvalue:pairing:repn:2}
\frac{1}{h_F}\sum_{\psi\in H_F}\psi(a_i)^{-1}\Gamma_{F,2}(1)L\Big(1,f\otimes\psi,\frac{a}{\varpi_\fl^n}\Big)=\xi^{(i)}_{n,\fl}(a)\cap\delta^{(i)}(f_{(i)})
\end{align}
by using the equations (\ref{special:value:of:twisted:modular:L:functions}), (\ref{integral:evaluation}) and the orthogonality of characters.

Note that $S_2\big(U_1(\fN)\big)$ is a finite dimensional complex vector space. From the pairing (\ref{lefschetz:poincare:pairing}) and the generalized Eichler-Shimura isomorphism, we induce the following non-degenerate pairing:
\begin{align*}
\langle\cdot,\cdot\rangle:H_1(X_1(\fN),\C)\times S_2\big(U_1(\fN)\big)\rightarrow\C,\ \langle c,f \rangle\mapsto\sum_{i=1}^{h_F}c_i\cap\delta^{(i)}(f_{(i)}),
\end{align*}
where $c_i$ is the $i$-th component of a homology class $c$ under the isomorphism $H_1(X_1(\fN),\C)\cong\bigoplus_{i=1}^{h_F} H_1(X_1^i(\fN),\C)$. 
 
For an open subset $U_v(a_0):=a_0+\fl^v\O_{F,\fl}$ of $\O_{F,\fl}^\times$ and an element $b\in\O_{F,\fl}$, let us define a special modular symbol $\Upsilon_{n,\fl}^{(i)}\big(U_v(a_0),b\big)$ by
$$
\Upsilon_{n,\fl}^{(i)}\big(U_v(a_0),b\big):=\frac{1}{\ell^{n-v}}\sum_{a\equiv a_0 (\fl^v)}\mathbf{e}_F\Big(\frac{-ab}{\varpi_\fl^n}\Big)\cdot\xi^{(i)}_{n,\fl}(a),
$$
where $a-a_0$ runs over $\fl^v\O_{F,\fl}/\fl^n\O_{F,\fl}$. 
By the generalized Manin-Drinfeld theorem, we have $\Upsilon_{n,\fl}^{(i)}\big(U_v(a_0),b\big)\in H_1(X_1^i(\fN),\C)$. If there is no ambiguity on $\fl$, we just denote $\Upsilon_{n,\fl}^{(i)}$ by $\Upsilon_{n}^{(i)}$. Then we have the following estimation: 

\begin{prop}\label{pairing:esti} Let $\beta$ be a non-zero element of $\fa_i^{-1}$ and $f\in S_2\big(U_1(\fN)\big)$. For $n>\max\{2v+2v_\fl(\beta),v+\operatorname{log}_\ell\big(4N(\beta)\big)\}$ and $\epsilon>0$, we have
\begin{align*}\begin{split}
\langle\Upsilon_{n,\fl}^{(i)}(U_{v}(a_0),\beta),f\rangle=\frac{a_{f}(\beta\fa_i)}{N(\beta\fa_i)}\big(1+O(\ell^{-\frac{n}{6}})\big)+O\Big(\frac{\ell^v}{\ell^{(\frac{3}{16}-\e)n}}\Big)+O\bigg(\frac{\ell^{v+v_\fl(\beta)+1}}{\ell^{(\frac{3}{32}-\e)n}}\bigg),
\end{split}\end{align*}
where the implicit constant does not depend on $\beta$, $n$, and $\ell$.
\end{prop}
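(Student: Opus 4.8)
The plan is to start from the spectral identity \eqref{special:Lvalue:pairing:repn:2}, which rewrites $\xi^{(i)}_{n,\fl}(a)\cap\delta^{(i)}(f_{(i)})$ as an average over Hilbert characters $\psi$ of additively twisted $L$-values $L\big(1,f\otimes\psi,a/\varpi_\fl^n\big)$. Plugging this into the definition of $\Upsilon^{(i)}_{n,\fl}(U_v(a_0),\beta)$ and interchanging the two finite sums, I would obtain an expression for $\langle\Upsilon^{(i)}_{n,\fl}(U_v(a_0),\beta),f\rangle$ as $\frac{1}{h_F\ell^{n-v}}\sum_{\psi}\psi(a_i)^{-1}\sum_{a\equiv a_0(\fl^v)}\mathbf{e}_F(-a\beta/\varpi_\fl^n)\,\Gamma_{F,2}(1)L(1,f\otimes\psi,a/\varpi_\fl^n)$. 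For each fixed $\psi$ I would then substitute the approximate functional equation \eqref{approx:func:eq} for $\Lambda(1,f\otimes\psi,a/\varpi_\fl^n)$, so that the inner object becomes a sum of two rapidly convergent series over $0\neq\alpha\in\fa_j^{-1}$ (one involving $a_{f\otimes\psi}$ and $V_{1,1}$, the other involving $a_{W_\fN(f\otimes\psi)}$ and $V_{2,1}$), each weighted by an additive character $\mathbf{e}_F(\alpha\cdot)$ evaluated at $a/\varpi_\fl^n$ or at its ``dual'' point. The free parameter $y$ in \eqref{approx:func:eq} will be chosen as a power of $\ell$ (balancing the two sides) to optimize the final error.

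Next I would carry out the $a$-sum. After writing $u=a/\varpi_\fl^n$ and tracking the dual variable $u^\p$ (the mod $\fl^n$ inverse of $-a$, up to the uniformizer normalization), the $a$-sum of the character $\mathbf{e}_F(-a\beta/\varpi_\fl^n)$ against $\mathbf{e}_F(\alpha u)$ collapses to a congruence condition; against $\mathbf{e}_F(\alpha u^\p)$ in the dual term it produces precisely a Kloosterman-type sum $\sum_{a\equiv a_0(\fl^v)}\mathbf{e}_F\!\big((a b + a^\p c)/\varpi_\fl^n\big)$ with $b,c$ determined by $\beta$ and $\alpha$, which is controlled by Lemma \ref{exp:sum}, giving the bound $2\ell^{\lceil n/2\rceil+\min\{v_\fl(b),v_\fl(c)\}}$. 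In the first (``diagonal'') term, the congruence $\alpha\beta^{-1}\equiv\text{(unit)}\ (\fl^{n-v})$ forces $\alpha$ to lie in a single coset of $\fl^{\,n-v-v_\fl(\beta)}$ inside $\fa_i^{-1}$; by Lemma \ref{lattice:size:esti} such $\alpha$ with $N(\alpha)\ll y$ number $O(1)$ once $n$ is large enough, and by Lemma \ref{suff:large:n:identity} the minimal one is $\alpha=\beta$ itself, whose contribution is $a_f(\beta\fa_i)/N(\beta\fa_i)$ after the character orthogonality over $\psi$ picks out $j=i$; the remaining $\alpha$ in that coset are genuinely larger in norm and get absorbed by the decay $V_{1,1}(N(\alpha\fa_i)/y)=O(N(\alpha\fa_i)/y)$. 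The dual term is estimated by putting absolute values: the number of relevant $\alpha$ up to norm $y\cdot N(\fN\fq^2)$ is $O(y)$ by Lemma \ref{lattice:size:esti}, each coefficient $a_{W_\fN(f\otimes\psi)}(\alpha\fa_i)$ is $O_\epsilon(N(\alpha)^{\epsilon})$ by the Hecke bound together with Lemma \ref{invol:proj:coeff:relation}, and the Kloosterman bound contributes $\ell^{n/2+\min\{v_\fl(b),v_\fl(c)\}}$; splitting the $\alpha$-range according to the $\fl$-divisibility of the resulting $b,c$ (equivalently, of $\alpha$ relative to $\beta$) and optimizing $y$ as a power of $\ell$ yields the two error terms $O(\ell^v/\ell^{(3/16-\epsilon)n})$ and $O(\ell^{v+v_\fl(\beta)+1}/\ell^{(3/32-\epsilon)n})$, while the tail of the diagonal series beyond $N(\alpha)\asymp y$ gives the $O(\ell^{-n/6})$ relative error on the main term.

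I expect the main obstacle to be the bookkeeping in the dual term: one must simultaneously keep track of (i) the normalization factors $N(\fN\fq^2)^{(k-s)/2}=N(\fl^{2n}\fN)^{\cdots}$ coming from the functional equation with $\fq=\fl^n$, (ii) the precise identification of the arguments $b,c$ of the Kloosterman sum in terms of $\alpha$, $\beta$, and the ideal $\fa_j$ so that Lemma \ref{exp:sum}'s hypotheses $v\le n-\lceil n/2\rceil-\min\{v_\fl(b),v_\fl(c)\}$ are actually met on the relevant range, and (iii) the dependence of everything on $v$ and $v_\fl(\beta)$, since the claim asserts the implicit constants are independent of $\beta,n,\ell$. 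The exponents $3/16$ and $3/32$ (rather than the naive $1/4$) reflect the loss incurred when the optimal $y$ must be chosen uniformly despite the ranges where $\min\{v_\fl(b),v_\fl(c)\}$ can be as large as a constant times $n$; getting these exponents to come out correctly, rather than something weaker, is where the delicate splitting of the $\alpha$-sum has to be done carefully, exactly as in Kim--Sun \cite{kim2017modular}, Kwon--Sun \cite{kwon2020nonvanishing} and Luo--Ramakrishnan \cite{LuoRama}. The condition $n>\max\{2v+2v_\fl(\beta),\,v+\log_\ell(4N(\beta))\}$ is precisely what is needed for Lemmas \ref{lattice:size:esti}, \ref{suff:large:n:identity} and \ref{exp:sum} to all apply at once, so once the three lemmas are in place the argument is a (lengthy) assembly.
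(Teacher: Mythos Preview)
Your overall architecture matches the paper's proof: start from \eqref{special:Lvalue:pairing:repn:2}, insert the approximate functional equation \eqref{approx:func:eq} with $\fq=\fl^n$, execute the $a$-sum to get a congruence condition $\alpha\in\beta+\fl^{n-v}$ on the $V_{1,1}$-side (an additive congruence, not the multiplicative one you wrote) and a Kloosterman sum on the $V_{2,1}$-side, then feed in Lemmas \ref{lattice:size:esti}, \ref{suff:large:n:identity}, \ref{exp:sum} and optimize the free parameter $y$.

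There is, however, a concrete error that would block the argument as written. You claim $a_{W_\fN(f\otimes\psi)}(\alpha\fa_j)=O_\epsilon(N(\alpha)^\epsilon)$ ``by the Hecke bound''. In the normalization used here (the Dirichlet series for $L(s,f)$ converges only for $\Re(s)>2$) this is false: even Ramanujan would only give $O(N(\alpha)^{1/2+\epsilon})$, and for Bianchi cusp forms Ramanujan is not known. The paper instead uses $|a_f(\fa)|\ll_\epsilon N(\fa)^{1/2+\theta+\epsilon}$ with the best available exponent $\theta=7/64$ of Nakasuji \cite{Nakasuji}, and the numbers $3/16$ and $3/32$ in the statement are \emph{exactly} what this value of $\theta$ produces after the choice $y=\ell^{4(n-v)/3}$: one computes $-\tfrac13+\tfrac{4\theta}{3}=-\tfrac{3}{16}$ for the diagonal tail and $-\tfrac16+\tfrac{2\theta}{3}=-\tfrac{3}{32}$ for the dual term. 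Your explanation that these exponents reflect ``the loss incurred when $\min\{v_\fl(b),v_\fl(c)\}$ can be as large as a constant times $n$'' is mistaken: in the dual term one has $b=-\beta$ fixed, so $\min\{v_\fl(b),v_\fl(c)\}\le v_\fl(\beta)$ uniformly in $\alpha$, and no splitting over $\fl$-divisibility is performed or needed. The non-trivial arithmetic input you are missing is Nakasuji's bound; with it in hand (together with Abel summation against the lattice count of Lemma \ref{lattice:size:esti} for the diagonal tail, which you also gloss over), the rest of your outline goes through exactly as in the paper.
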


\begin{proof}
Let $f_\chi$ be the $\chi$-component of $f$ with respect to the isomorphism $S_2\big(U_1(\fN)\big)\cong\bigoplus_\chi S_2(\fN,\chi)$. Then we have $\langle -,f \rangle=\sum_{\chi}\langle -,f_\chi \rangle$. Note that the size of the sum $\sum_\chi$ depends only on $h_F$ and $N(\fN)$. So we can assume that $f\in S_2(\fN,\chi)$. 

For the simplicity, let us denote $U=U_v(a_0)$. By the definition of the special modular symbols and the equation (\ref{special:Lvalue:pairing:repn:2}), we obtain
\begin{align}\begin{split}\label{induced:pairing:eval}
\langle\Upsilon_{n,\fl}^{(i)}\big(U,\beta\big),f\rangle=\frac{1}{\ell^{n-v}}\sum_{a\equiv a_0 (\fl^n)}\mathbf{e}_F\Big(\frac{-a\beta}{\varpi_\fl^n}\Big) \sum_{\psi\in H_F} \frac{\Gamma_{F,2}(1)}{h_F\psi(a_i)}L\Big(1,f\otimes\psi,\frac{a}{\varpi_\fl^n}\Big).
\end{split}\end{align}
By the equation (\ref{approx:func:eq}) for $\fq=\fl^n$, Lemma \ref{invol:proj:coeff:relation}, and the orthogonality of characters, we have the following equation of the $L$-function:
\begin{align}\begin{split}\label{approx:func:eq:2}
\sum_{\psi\in H_F}\frac{\Gamma_{F,2}(1)}{h_F\psi(a_i)}L\Big(1,f\otimes\psi,\frac{a}{\varpi_\fl^n}\Big)=\sum_{0\neq\a\in\fa_i^{-1}}\frac{a_{f}(\a\fa_i)\textbf{e}_F\big(\frac{\a a }{\varpi_\fl^n}\big)}{N(\a\fa_i)|\O_F^\times|}V_{1,1}\bigg(\frac{N(\a\fa_i)}{y}\bigg)& \\
+\chi(\varpi_\fl)^n\sum_{0\neq\a\in\fa_j^{-1}}\frac{a_{W_\fN f}(\a\fa_j)\textbf{e}_F\big(\frac{\a a^\p}{\varpi_\fl^n}\big)}{N(\a\fa_j)|\O_F^\times|}V_{2,1}\bigg(\frac{N(\a\fa_j)y}{N(\fN)\ell^{2n}}\bigg)&,
\end{split}\end{align}
where $a^\p\in\O_{F,\fl}$ is the mod $\fl^{n}$ inverse of $-a$, and $j$ is the integer such that $\fa_j=\fa_i^{-1}\fN$ in $\text{Cl}(F)$. By using the equation (\ref{approx:func:eq:2}), we split the equation (\ref{induced:pairing:eval}) into the two terms:
\begin{align}
\frac{1}{\ell^{n-v}}&\sum_{a\equiv a_0 (\fl^v)}\sum_{0\neq\a\in\fa_i^{-1}}\frac{a_{f}(\a\fa_i)\textbf{e}_F\big(\frac{a(\a -\beta)}{\varpi_\fl^n}\big)}{N(\a\fa_i)|\O_F^\times|}V_{1,1}\bigg(\frac{N(\a\fa_i)}{y}\bigg)  \label{approx:func:eq:2:1st} \\ 
+\frac{\chi(\varpi_\fl)^n}{\ell^{n-v}}&\sum_{a\equiv a_0 (\fl^v)}\sum_{0\neq\a\in\fa_j^{-1}}\frac{a_{W_\fN f}(\a\fa_j)\textbf{e}_F\big(\frac{a^\p \a -a\beta }{\varpi_\fl^n }\big)}{N(\a\fa_j)|\O_F^\times|}V_{2,1}\bigg(\frac{N(\a\fa_j)y}{N(\fN)\ell^{2n}}\bigg). \label{approx:func:eq:2:2nd}
\end{align} 
Note that by the orthogonality of characters, we observe that
\begin{displaymath}
\frac{1}{\ell^{n-v}}\sum_{a\equiv a_0 (\fl^v)}\textbf{e}_F\Big(\frac{ar}{\varpi_\fl^n}\Big) = \left\{ \begin{array}{ll}
\textbf{e}_F\big(\frac{a_0r}{\varpi_\fl^n}\big) & \textrm{if $v_\fl(r)\geq n-v$,}\\
0 & \textrm{otherwise}.
\end{array} \right.
\end{displaymath}
By Lemma \ref{suff:large:n:identity}, we can write (\ref{approx:func:eq:2:1st}) as
\begin{align} 
\frac{a_{f}(\beta\fa_i)}{N(\beta\fa_i)|\O_F^\times|}V_{1,1}\bigg(\frac{N(\beta\fa_i)}{y}\bigg)+\sum_{\a\in X^{(i)}_{\beta,n}}\frac{a_{f}(\a\fa_i)\textbf{e}_F\big(\frac{a_0(\a-\beta)}{\varpi_\fl^n}\big)}{N(\a\fa_i)|\O_F^\times|}V_{1,1}\bigg(\frac{N(\a\fa_i)}{y}\bigg) \label{approx:func:eq:2:1st:char}
\end{align}
where $X^{(i)}_{\beta,n}$ is the set defined by $\{\a\in (\beta+\fl^{n-v})\cap\fa_i^{-1}:N(\a)>N(\beta)\}$. Note that if $\alpha\in X^{(i)}_{\beta,n}(y)$, then we have $4^{-1}\ell^{n-v}<(\ell^{\frac{n-v}{2}}-N(\beta)^{1/2})^2\leq N(\alpha)$ by the triangle inequality,  Let $0\leq\theta<\frac{1}{2}$ be a number such that $|a_f(\fa)|\ll_\e N(\fa)^{\frac{1}{2}+\theta+\epsilon}$ for any ideals $\fa$ of $F$. Then by using the equation (\ref{aux.func.esti}), the estimation of the equation (\ref{approx:func:eq:2:1st:char}) is given by
\begin{equation*}
\frac{a_{f}(\beta\fa_i)}{N(\beta\fa_i)}\bigg(1+O\bigg(\frac{\ell^{\frac{n-v}{2}}}{y^{1/2}}\bigg)\bigg)
\end{equation*}
for $y\geq N(\fa_i)\ell^{n-v}$, where the implicit constants depends only on $D_F$ and $N(\fa_i)$. 
We split the second term of (\ref{approx:func:eq:2:1st:char}) into the two parts:
\begin{equation*}
\text{2nd term of }(\ref{approx:func:eq:2:1st:char})=(*)+(**),\text{ where }(*)=\sum_{N(\a\fa_i)\leq y},\ (**)=\sum_{N(\a\fa_i)>y}.
\end{equation*}
For $a\in\O_{F,\fl}$, define functions $u_{a,n}^{(i)}$ and $U_{a,n}^{(i)}$ by 
\begin{displaymath}
u_{a,n}^{(i)}(\a) = \left\{ \begin{array}{ll}
1 & \textrm{if $\a\in (a+\fl^{n-v})\cap\fa_i^{-1}$},\\
0 & \textrm{otherwise},
\end{array} \right.
\end{displaymath} 
and
$
U_{a,n}^{(i)}(y):=\sum_{N(\a)\leq y}u_{a,n}^{(i)}(\a)=\#\{\a\in (a+\fl^{n-v})\cap\fa_i^{-1}:\ N(\a)\leq y \}.
$ 
By using the Abel summation formula, the bound on $a_f(\fa)$, the equation (\ref{aux.func.esti}), Lemma \ref{lattice:size:esti} and Lemma \ref{suff:large:n:identity}, the estimation of $(*)$ is given by 
\begin{align*}
(*)\ll_{D_F,N(\fa_i),\e} &\sum_{4^{-1}\ell^{n-v}<N(\a)\leq y N(\fa_i^{-1})}\frac{u_{\beta,n}^{(i)}(\a)}{N(\a)^{\frac{1}{2}-\theta-\epsilon}} \\
\ll_{N(\fa_i),\e,\theta} &\frac{U_{\beta,n}^{(i)}(yN(\fa_i^{-1}))}{y^{\frac{1}{2}-\theta-\epsilon}}+\frac{U_{\beta,n}^{(i)}(4^{-1}\ell^{n-v})}{(4^{-1}\ell^{n-v})^{\frac{1}{2}-\theta-\epsilon}} \\
+&\int_{4^{-1}\ell^{n-v}}^{yN(\fa_i^{-1})} \frac{U_{\beta,n}^{(i)}(t)}{t^{\frac{3}{2}-\theta-\epsilon}}dt \ll_{F,N(\fa_i),\e,\theta} \frac{y^{\frac{1}{2}+\theta+\epsilon}}{\ell^{n-v}}+\ell^{(\theta+\epsilon-\frac{1}{2})(n-v)}
\end{align*} 
for $y\geq N(\fa_i)\ell^{n-v}$.
Similarly as above, the estimation of $(**)$ is given by
\begin{align*}
(**)\ll_{F,N(\fa_i),\e} &\sum_{N(\a\fa_i)>y}\frac{u_{\beta,n}^{(i)}(\a)y}{N(\a)^{\frac{3}{2}-\theta-\epsilon}} \ll_{N(\fa_i),\e,\theta}\frac{U_{\beta,n}^{(i)}(yN(\fa_i)^{-1})y}{y^{\frac{3}{2}-\theta-\epsilon}}\\
+&\int^\infty_{yN(\fa_i)^{-1}} \frac{U_{\beta,n}^{(i)}(t)y}{t^{\frac{5}{2}-\theta-\epsilon}}dt \ll_{F,N(\fa_i),\e,\theta} \frac{y^{\frac{1}{2}+\theta+\epsilon}}{\ell^{n-v}}
\end{align*} 
for $y\geq N(\fa_i)\ell^{n-v}$.
In sum, the estimation of the equation (\ref{approx:func:eq:2:1st:char}) is given by
\begin{align*}
\frac{a_{f_\nu}(\beta\fa_i)}{N(\beta\fa_i)}\bigg(1+O\bigg(\frac{\ell^{\frac{n-v}{2}}}{y^{1/2}}\bigg)\bigg)+O\bigg(\frac{y^{\frac{1}{2}+\theta+\epsilon}}{\ell^{n-v}}\bigg)+O(\ell^{(\theta+\epsilon-\frac{1}{2})(n-v)})
\end{align*}
for $y\geq N(\fa_i)\ell^{n-v}$, where the implicit constants depends only on $F,N(\fa_i),\e,\theta$.

On the other hand, we can write the term (\ref{approx:func:eq:2:2nd}) by
\begin{equation}\label{approx:func:eq:2:2nd:char}
\chi(\varpi_\fl)^n\sum_{0\neq\a\in\fa_j^{-1}}\frac{a_{W_\fN f}(\a\fa_j)\text{Kl}_n(\a)}{N(\a\fa_j)|\O_F^\times|}V_{2,1}\bigg(\frac{N(\a\fa_j)y}{N(\fN)\ell^{2n}}\bigg),
\end{equation}
where $\text{Kl}_n(\a)$ is a number defined by
$$
\text{Kl}_n(\a):=\frac{1}{\ell^{n-v}}\sum_{a\equiv a_0 (\fl^v)}\textbf{e}_F\bigg(\frac{-a\beta + a^\p \a }{\varpi_\fl^n}\bigg).
$$
We split the term (\ref{approx:func:eq:2:2nd:char}) into the two parts:
\begin{equation*}
(\ref{approx:func:eq:2:2nd:char})=I+II,\text{ where }I=\sum_{N(\a\fa_j)\leq N(\fN)\ell^{2n}/y},\ II=\sum_{N(\a\fa_j)>N(\fN)\ell^{2n}/y}.
\end{equation*}
Since $n>2v+2v_\fl(\beta)$, we have $\text{Kl}_n(\a) \leq \ell^{-\frac{n}{2}+v+v_\fl(\beta)+1}$ by Lemma \ref{exp:sum}.
Note that we have $a_{W_\fN f}(\fa)\ll_\e N(\fa)^{\frac{1}{2}+\theta+\e}$. Therefore, by the equation (\ref{aux.func.esti}), the estimations of $I$ and $II$ are given by
\begin{align*}
I &\ll_{D_F,N(\fN),\e}\frac{\ell^{v+v_\fl(\beta)+1}}{\ell^{\frac{n}{2}}}\sum_{N(\a)\leq N(\fN\fa_j^{-1})\ell^{2n}/y}\frac{1}{N(\a)^{\frac{1}{2}-\theta-\e}} \\
 &\ll_{N(\fa_j),N(\fN),\theta,\e} \frac{\ell^{(\frac{1}{2}+2\theta+2\e)n+v+v_\fl(\beta)+1}}{y^{\frac{1}{2}+\theta+\e}}, \\
II &\ll_{D_F,N(\fN),\e} \ell^{v+v_\fl(\beta)+1+\frac{3n}{2}}\sum_{N(\a)> N(\fN\fa_j^{-1})\ell^{2n}/y}\frac{1}{N(\a)^{\frac{3}{2}-\theta-\e}y} \\
 &\ll_{N(\fa_j),N(\fN),\theta,\e} \frac{\ell^{(\frac{1}{2}+2\theta+2\e)n+v +v_\fl(\beta)+1}}{y^{\frac{1}{2}+\theta+\e}}.
\end{align*}
Hence we have
\begin{align*}
(\ref{approx:func:eq:2:2nd:char})=I+II=O\bigg(\frac{\ell^{(\frac{1}{2}+2\theta+2\e)n+v+v_\fl(\beta)+1}}{y^{\frac{1}{2}+\theta+\e}}\bigg),
\end{align*} where the implicit constant depends only on $D_F,N(\fa_j),N(\fN),\e$ and $\theta$. 

In sum, we have
\begin{align*}\begin{split}
\langle\Upsilon_{n,\fl}^{(i)}(U_v(a_0),\beta),f\rangle=&\frac{a_{f_\nu}(\beta\fa_i)}{N(\beta\fa_i)}\bigg(1+O\bigg(\frac{\ell^{\frac{n-v}{2}}}{y^{1/2}}\bigg)\bigg)+O\bigg(\frac{y^{\frac{1}{2}+\theta+\epsilon}}{\ell^{n-v}}\bigg) \\
&+O(\ell^{(\theta+\epsilon-\frac{1}{2})(n-v)})+O\bigg(\frac{\ell^{(\frac{1}{2}+2\theta+2\e)n+v+v_\fl(\beta)+1}}{y^{\frac{1}{2}+\theta+\e}}\bigg)
\end{split}\end{align*}
for $y\geq N(\fa_i)\ell^{n-v}$, where the implicit constant depends only on $F$, $N(\fa_i)$, $N(\fa_j)$, $N(\fN)$, $\e$ and $\theta$. Note that $\ell$ is coprime to $\fa_i^{-1}\fN$ for each $i$, so the implicit constant does not depend on $\ell$. By Nakasuji \cite[Corollary 1.1]{Nakasuji}, we can choose $\theta=\frac{7}{64}$. By letting $y=\ell^{\frac{4}{3}(n-v)}$, we conclude the proof.
\end{proof} 


\subsection{Full-rankness results}

From the estimation in the previous subsection, we can prove the full-rankness of $M_{n,\fl}$ in $H_1(X_1(\fN),\Z)$. We need the following fundamental lemma:

\begin{lem}\label{fund:lin:alg} Let $X$ and $Y$ be finite dimensional complex vector spaces with a non-degenerate pairing $\langle\cdot,\cdot\rangle:X\times Y\rightarrow\C$. 
For any sequence of proper subspaces $\{X_m\}_m$ of $X$, there is a non-zero vector $y\in Y$ and a sequence $\{m_j\}$ in $\Z_{>0}$ such that $\lim_{j\rightarrow\infty}\langle u_{m_j},y \rangle=0$ for any sequence $\{u_m\}_{m>0}$ of vectors $u_m\in X_m$ of bounded norm.
\end{lem}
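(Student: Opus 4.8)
The plan is to reduce the statement to a compactness argument on the unit sphere of $Y$, using non-degeneracy of the pairing to record each proper subspace $X_m$ by a ``unit normal'' vector in $Y$.

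First I would fix a norm on $Y$ and, for each $m$, enlarge $X_m$ to a hyperplane $H_m=\ker(\phi_m)$ with $\phi_m$ a non-zero linear functional on $X$ (possible since $X_m\subsetneq X$ and $\dim X<\infty$). Because $\langle\cdot,\cdot\rangle$ is non-degenerate and $\dim X=\dim Y<\infty$, the linear map $Y\to X^{*}$, $y\mapsto\langle\cdot,y\rangle$, is an isomorphism, so $\phi_m=\langle\cdot,y_m\rangle$ for a unique non-zero $y_m\in Y$, which I normalize so that $\|y_m\|=1$.

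Next I would invoke compactness of the unit sphere of $Y$ to extract a subsequence $\{m_j\}$ with $y_{m_j}\to y$ and $\|y\|=1$; in particular $y\neq 0$, and this $y$ together with this subsequence are the objects promised by the lemma. Finally, for any sequence $\{u_m\}$ with $u_m\in X_m$ and $\sup_m\|u_m\|=:C<\infty$, I would use that $u_{m_j}\in X_{m_j}\subseteq H_{m_j}=\ker\langle\cdot,y_{m_j}\rangle$ forces $\langle u_{m_j},y_{m_j}\rangle=0$, so by bilinearity $\langle u_{m_j},y\rangle=\langle u_{m_j},y-y_{m_j}\rangle$; a fixed bound $|\langle x,y'\rangle|\le M\|x\|\,\|y'\|$ (automatic in finite dimension) then gives $|\langle u_{m_j},y\rangle|\le MC\|y-y_{m_j}\|\to 0$ as $j\to\infty$.

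I do not expect a genuine obstacle here: the two points to check---that non-degeneracy upgrades to the isomorphism $Y\cong X^{*}$, and that the bilinear pairing is bounded---are both immediate in finite dimensions. The only mild subtlety is that the $u_m$ are arbitrary and not chosen in advance, which is exactly why one first pins down $y$ as the limit of the normals $y_m$ and only afterwards feeds in the bounded sequence.
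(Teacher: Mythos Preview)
Your proposal is correct and follows essentially the same approach as the paper: pick a unit vector $y_m\in Y$ annihilating $X_m$ via non-degeneracy, extract a convergent subsequence by compactness, and conclude via $\langle u_{m_j},y\rangle=\langle u_{m_j},y-y_{m_j}\rangle$ together with boundedness of the pairing. Your version is in fact slightly tidier in that you work on the unit sphere rather than the unit ball, which makes $y\neq 0$ immediate.
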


\begin{proof} Let us choose $m>0$. Since $X_m$ is proper in $X$, there is a non-zero element $y_m\in Y$ such that $|y_m|_Y=1$ and $\langle x,y_m \rangle=0$ for all $x\in X_m$. Since the unit ball of $Y$ is compact, there is a sequence $\{m_j\}_j$ in $\Z_{>0}$ and a vector $y$ in the unit ball of $Y$ such that $y_{m_j}$ converges to $y$. Then by applying the Cauchy-Schwarz inequality, we have
$$
|\langle u_{m_j},y\rangle|=|\langle u_{m_j},y-y_{m_j}\rangle| \leq |u_{m_j}|\cdot|y-y_{m_j}|
$$
for any bounded sequence $\{u_{m}\}_m$ in $X$. By taking the limit $j\rightarrow\infty$, we finish the proof.
\end{proof}

\begin{thm}\label{vertical:direction}
For $n\geq 22$, the module $M_{n,\fl}$ is of full-rank in $H_1(X_1(\fN),\Z)$ for the prime ideals $\fl$ of sufficiently large norm. 
\end{thm}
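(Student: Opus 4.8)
The plan is to convert the full-rankness of $M_{n,\fl}$ into a non-vanishing statement for the perfect pairing $\langle\cdot,\cdot\rangle\colon H_1(X_1(\fN),\C)\times S_2(U_1(\fN))\to\C$ (this pairing is non-degenerate between spaces of equal dimension, being the Lefschetz--Poincar\'e pairing \eqref{lefschetz:poincare:pairing} transported through the generalized Eichler--Shimura isomorphism of Proposition~\ref{eichler:shimura:harder}), and then to run a compactness argument in the spirit of Kim--Sun~\cite{kim2017modular} and Kwon--Sun~\cite{kwon2020nonvanishing}. Since $M_{n,\fl}$ is of full rank in $H_1(X_1(\fN),\Z)$ exactly when $(M_{n,\fl})_\C=H_1(X_1(\fN),\C)$, and since the pairing is perfect, it suffices to show that for $\ell=N(\fl)$ large there is no nonzero $f\in S_2(U_1(\fN))$ pairing to zero with the whole $\C$-span of $\{\xi^{(i)}_{n,\fl}(a):1\le i\le h_F,\ a\in\O_{F,\fl}^\times\}$, equivalently with $(M_{n,\fl})_\C$.

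First I would argue by contradiction: if the conclusion fails, one obtains a sequence of prime ideals $\fl_m$, each coprime to $\fa_i^{-1}\fN$, with $\ell_m:=N(\fl_m)\to\infty$ and with $X_m:=(M_{n,\fl_m})_\C$ a proper subspace of $X:=H_1(X_1(\fN),\C)$. Applying Lemma~\ref{fund:lin:alg} to $\langle\cdot,\cdot\rangle$ and the sequence $\{X_m\}$ then produces a nonzero $f\in S_2(U_1(\fN))$ and a subsequence $\{m_j\}$ such that $\lim_{j\to\infty}\langle u_{m_j},f\rangle=0$ for \emph{every} sequence $\{u_m\}$, $u_m\in X_m$, of bounded norm. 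The next step is to feed the special modular symbols into this: fixing a unit $a_0$, an index $1\le i\le h_F$, and a nonzero $\beta\in\fa_i^{-1}$, set $u_m:=\Upsilon^{(i)}_{n,\fl_m}(U_1(a_0),\beta)$ (and $u_m:=0$ for the finitely many $m$ where $\fl_m\mid\beta$ or where $\ell_m$ is too small for the hypotheses of Proposition~\ref{pairing:esti}). Each such $u_m$ is a $\C$-linear combination of the $\xi^{(i)}_{n,\fl_m}(a)$ and, by the generalized Manin--Drinfeld theorem~\cite{kurcanov1978cohomology}, lies in $H_1(X_1^i(\fN),\C)\subseteq H_1(X_1(\fN),\C)$, hence $u_m\in X_m$.

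The key point is that $\{u_m\}$ is bounded uniformly in $m$: taking $v=1$ and $v_{\fl_m}(\beta)=0$ in Proposition~\ref{pairing:esti}, for every $g$ in a fixed basis of the finite-dimensional space $S_2(U_1(\fN))$ one has
\[
\langle u_m,g\rangle=\frac{a_g(\beta\fa_i)}{N(\beta\fa_i)}+O\!\bigl(\ell_m^{-n/6}\bigr)+O\!\bigl(\ell_m^{\,1-(3/16-\e)n}\bigr)+O\!\bigl(\ell_m^{\,2-(3/32-\e)n}\bigr),
\]
with implied constants independent of $\ell_m$ and $\beta$; for $n\ge 22$ one may choose $\e>0$ small enough (e.g.\ $\e<\tfrac{1}{352}$) that all three error exponents are negative, so these pairings stay bounded as $\ell_m\to\infty$, and perfectness of $\langle\cdot,\cdot\rangle$ then bounds $|u_m|$. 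Consequently Lemma~\ref{fund:lin:alg} gives $\langle u_{m_j},f\rangle\to 0$, whereas the displayed formula with $g=f$ gives $\langle u_{m_j},f\rangle\to a_f(\beta\fa_i)/N(\beta\fa_i)$, forcing $a_f(\beta\fa_i)=0$ for all $i$ and all nonzero $\beta\in\fa_i^{-1}$. As the ideals $\beta\fa_i$ run through all nonzero integral ideals of $F$, this means $a_f\equiv 0$, whence $f=0$ by the Fourier--Whittaker expansion (Proposition~\ref{four:whit:exp:hida}) --- a contradiction, completing the proof.

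The substantive analytic work is already done upstream: it is Proposition~\ref{pairing:esti}, whose proof rests on the approximate functional equation \eqref{approx:func:eq}, the bound $\theta=\tfrac{7}{64}$ toward Ramanujan, the Kloosterman-type estimate of Lemma~\ref{exp:sum}, and the lattice-point counts of Lemmas~\ref{lattice:size:esti}--\ref{suff:large:n:identity}. Within the present argument the only delicate issues are (i) checking that, for \emph{fixed} $n$, the error terms of Proposition~\ref{pairing:esti} decay as $\ell\to\infty$ --- this is exactly where the hypothesis $n\ge 22$ is needed, the threshold coming from the exponent $2-(\tfrac{3}{32}-\e)n$ attached to the optimal choice $v=1$ --- and (ii) verifying that the special symbols genuinely lie in $(M_{n,\fl})_\C$, which is the standard linear-algebra fact that $(V\cap W)\otimes\C=(V\otimes\C)\cap(W\otimes\C)$ for a $\Q$-rational subspace $W$ and a $\Q$-spanned subspace $V$. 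I expect the main conceptual obstacle to be organizing the tension at the heart of the argument: the symbols $\{\Upsilon^{(i)}_{n,\fl_m}(\cdot,\cdot)\}_m$ must simultaneously lie in the hypothetically proper subspaces $X_m$, remain bounded, and pair non-trivially with any prospective $f$, and it is the approximate functional equation (through Proposition~\ref{pairing:esti}) that reconciles these requirements.
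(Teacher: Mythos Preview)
Your proposal is correct and follows essentially the same route as the paper: argue by contradiction, invoke Lemma~\ref{fund:lin:alg} to extract a nonzero $f$, feed in the special symbols $\Upsilon_{n,\fl_m}^{(i)}(U_1(a_0),\beta)$ with $v=1$, use Proposition~\ref{pairing:esti} both to bound the sequence and to identify the limit as $a_f(\beta\fa_i)/N(\beta\fa_i)$, and conclude. The only cosmetic difference is that the paper picks a single $\beta\fa_i$ with $a_f(\beta\fa_i)\neq 0$ and derives an immediate contradiction, whereas you quantify over all $\beta,i$ to force $a_f\equiv 0$; your explicit tracking of the threshold $n\ge 22$ via the exponent $2-(\tfrac{3}{32}-\e)n$ is exactly the computation the paper leaves implicit.
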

\begin{proof} 
Let us set $n\geq 22$.
We need to show that $M_{n,\fl}\otimes_\Z\C = H_1(X_1(\fN),\C)$ for the prime ideals $\fl$ of sufficienly large norm.  
Let us assume the contrary, i,e., assume that there is a sequence $\{\fl_m\}_m$ of prime ideals such that $M_{n,\fl_m}\otimes_\Z\C$ is proper in $H_1(X_1(\fN),\C)$ and $N(\fl_m)\rightarrow\infty$ as $m\rightarrow\infty$. Note that without loss of generality, we can discard the prime ideals whose norm divides $\fa_i^{-1}\fN$ for some $i$ from the sequence $\{\fl_{m}\}_m$ . 
By Lemma \ref{fund:lin:alg}, there is a non-zero element $f\in S_2\big(U_1(\fN)\big)$ of bounded norm and a sequence $\{m_j\}$ in $\Z_{>0}$ such that for any bounded sequence $\{u_m\}$ in $M_{n,\fl_m}\otimes_\Z\C$, we have 
\begin{equation}\label{pairing:value:zero}
\lim_{j\rightarrow\infty}\langle u_{m_j},f\rangle=0.
\end{equation}
Since $f$ is non-zero, there is a non-zero integral ideal $\fa=\beta\fa_i$ such that $a_{f}(\beta\fa_i)\neq 0$. 

Let us consider a sequence $\{\Upsilon_{n}^{(i)}(U_{\fl_{m}},\beta)\}_{m}$, where $U_\fl=1+\fl\O^\times_{F,\fl}$. Note that $\Upsilon_{n}^{(i)}(U_{\fl_{m}},\beta)\in M_{n,\fl_m}\otimes_\Z\C$ and $v_{\fl_m}(\beta)=0$ for sufficiently large $m$. 
From Proposition \ref{pairing:esti}, we obtain
\begin{equation}\label{special:symbol:pairing:eval}
\langle\Upsilon_{n}^{(i)}(U_{\fl_m},\beta),g\rangle=\frac{a_g(\beta\fa_i)}{N(\beta\fa_i)}+o(1)
\end{equation}
as $m\rightarrow\infty$ for any $g\in S_2\big(U_1(\fN)\big)$, where the implicit constant does not depend on $\{\fl_m\}$.
So the sequence $\{\Upsilon_{n}^{(i)}(U_{\fl_{m}},\beta)\}_{m}$ is bounded. By Proposition \ref{pairing:esti} and the equation (\ref{pairing:value:zero}), we obtain
$$
\lim_{m\rightarrow\infty}\langle\Upsilon_{n}^{(i)}(U_{\fl_m},\beta),f\rangle=\frac{a_{f}(\beta\fa_i)}{N(\beta\fa_i)}=0,
$$
which is a contradiction.
\end{proof}

From the above result, we obtain the full-rankness theorem of horizontal direction, which has a key role to prove the result toward Greenberg's conjecure of Bianchi modular version:

\begin{thm}[Horizontal direction]\label{full:rank:horizontal} For an integer $n\geq 22$, there is a prime ideal $\fp_{n}=\a_{n}\O_F$ whose norm is coprime to $\fN\prod_{i=1}^{h_F}\fa_i^{-1}$ such that $M_{n,\fp_{n}}$ is of full-rank in $H_1(X_1(\fN),\Z)$. And there is an arithmetic progression $\mathfrak{X}_n\subset\big\{\a\O_F:\a\in \a_{n}+\fN\prod_{i=1}^{h_F}\fa_i^{-1}\big\}$ of ideals such that $M_{n,\fp_{n}}\subset M_{n,\fp}$ for the prime ideals $\fp$ in $\mathfrak{X}_n$. 
\end{thm}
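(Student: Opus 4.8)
The plan is to bootstrap from the vertical statement (Theorem \ref{vertical:direction}): a single large \emph{principal} prime that witnesses full-rankness will be shown to force full-rankness along an entire congruence class of primes. \emph{Choice of $\fp_n$.} By Theorem \ref{vertical:direction} there is a bound $B_n$ with $M_{n,\fl}$ of full rank in $H_1(X_1(\fN),\Z)$ whenever $N(\fl)\geq B_n$. Since the principal ideal class of $F$ contains infinitely many prime ideals (Landau's prime ideal theorem, equivalently Chebotarev for the Hilbert class field) and only finitely many prime ideals divide $\fN\prod_i\fa_i^{-1}$, I would pick a principal prime $\fp_n=\a_n\O_F$ with $N(\fp_n)\geq B_n$ and $N(\fp_n)$ coprime to $\fN\prod_i\fa_i^{-1}$; this already gives the first assertion.

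\emph{The progression.} Next I would fix an ideal $M$ which is a multiple of $\fN\prod_{i=1}^{h_F}\fa_i^{-1}$ and is coprime to $\fp_n$ (its precise size being dictated by the congruence-stability step below), and set $\mathfrak{X}_n:=\{\a\O_F:\ \a\equiv\a_n\ (\mathrm{mod}\ M),\ \a\O_F\text{ prime}\}$. Dirichlet's theorem on primes in ray classes of $F$ makes $\mathfrak{X}_n$ infinite (as $\a_n$ is a unit modulo $M$), and $M\subseteq\fN\prod_i\fa_i^{-1}$ gives $\mathfrak{X}_n\subseteq\{\a\O_F:\a\in\a_n+\fN\prod_i\fa_i^{-1}\}$. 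Everything then reduces to proving $M_{n,\fp_n}\subseteq M_{n,\fp}$ for $\fp=\a\O_F\in\mathfrak{X}_n$: combined with the full-rankness of $M_{n,\fp_n}$ this yields full-rankness of $M_{n,\fp}$, and since $[H_1(X_1(\fN),\Z):M_{n,\fp_n}]$ is finite it also yields the mod-$q$ statement of Theorem A.(2) for all $q$ not dividing that index.

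\emph{The containment.} Working in each component $H_1(X_1^i(\fN),\Z)$ with $\Gamma=\ovl{\Gamma_1^i(\fN)}$, recall $\xi_{n,\fp_n}^{(i)}(a)=\{\infty,r_a\}_\Gamma$ where $r_a$ is the fraction with numerator $-a$ and denominator $\varpi_{\fp_n}^n$ (a uniformizer of $\fp_n$, hence an associate of $\a_n^n$). The approach is to expand each such relative symbol through the Manin/continued-fraction presentation of $H_1(X_\Gamma,C_\Gamma,\Z)$ (à la Kur\v{c}anov \cite{kurcanov1978cohomology}, via the map $\mathrm{pr}_\Gamma$), writing it as $\mathrm{pr}_\Gamma$ of an explicit word $w_a$ in $\Gamma$ built from the successive convergents of $r_a$ together with boundary (cusp) data; the entries of $w_a$ are polynomials in $\a_n$ and in a chosen lift $\bar s_a\in\O_F$ of $-a\bmod\fp_n^n$. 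The step I would then establish is a \textbf{congruence-stability statement}: for $M$ deep enough, the class in $H_1(X_\Gamma,\Z)$ of any integral combination $\sum_a c_a\,\xi_{n,\fp_n}^{(i)}(a)$ lying in $H_1(X_\Gamma,\Z)$, and the linear conditions on $(c_a)$ cutting out that absolute-homology locus, depend on the data only through $\a_n\bmod M$ and $\{\bar s_a\bmod M\}_a$. Granting this, for $\fp=\a\O_F\in\mathfrak{X}_n$ one picks for each $a$ a unit $a'\in\O_{F,\fp}^\times$ whose lift satisfies $\bar s_{a'}\equiv\bar s_a\pmod M$ (possible as $\fp$ is coprime to $M$); since also $\a^n\equiv\a_n^n\pmod M$, the combination $\sum_a c_a\,\xi_{n,\fp}^{(i)}(a')$ obeys the same conditions and represents the same class in $H_1(X_\Gamma,\Z)$, whence $\sum_a c_a\,\xi_{n,\fp_n}^{(i)}(a)\in M_{n,\fp}$; summing over $i$ gives $M_{n,\fp_n}\subseteq M_{n,\fp}$.

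\emph{Main obstacle.} The heart of the matter is the congruence-stability statement — identifying the finite quotient of $\Gamma$ (built from the cusp data and a congruence level $M$) through which the classes $\mathrm{pr}_\Gamma(w_a)$ factor, and checking that the continued-fraction expansion of $r_a$ is compatible enough with reduction modulo $M$ that both the output class and the relations among the $\xi_{n,\fp_n}^{(i)}(a)$ are unaffected by replacing $(\a_n,\bar s_a)$ with a congruent pair modulo $M$. (This is the $\fl$-varying analogue of the combinatorics behind \eqref{special:cycle:modular:form:integral:estimation} and of the arguments of Kim--Sun \cite{kim2017modular} and Kwon--Sun \cite{kwon2020nonvanishing}.) The easy half, already forcing $M$ to be a multiple of $\fN\prod_j\fa_j^{-1}$, is the matching of cusps: the $\Gamma$-class of the primitive cusp $[\bar s_a:\a_n^n]$ is determined by the ideal class of $(\bar s_a,\a_n^n)=\O_F$ together with the residues of $\bar s_a$ and $\a_n^n$ modulo the level $\fN\prod_j\fa_j^{-1}$.
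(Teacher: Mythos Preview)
Your choice of $\fp_n$ and the overall shape (define a progression, prove $M_{n,\fp_n}\subset M_{n,\fp}$) match the paper. But your mechanism for the containment is overcomplicated and left as an unproved ``congruence-stability statement'', whereas the paper proves something much stronger and much more cheaply: for each individual generator $\xi^{(i)}_{n,\fp_n}(\beta)$ and each $\fp=\alpha\O_F\in\mathfrak{X}_n$ one has the \emph{literal equality}
\[
\xi^{(i)}_{n,\fp_n}(\beta)\;=\;\xi^{(i)}_{n,\fp}(\beta)
\]
as classes in $H_1(X_1^i(\fN),C_1^i(\fN),\Z)$. No continued-fraction expansion, no tracking of words $w_a$ in $\Gamma$, no analysis of which finite quotient $\mathrm{pr}_\Gamma$ factors through.

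The trick you are missing is the following one-step computation. Write $b=\beta\alpha_n^{-n}$ so that $\xi^{(i)}_{n,\fp_n}(\beta)=\{\infty,b\}_{\Gamma}$ with $\Gamma=\ovl{\Gamma_1^i(\fN)}$. If $\alpha\equiv\alpha_n\pmod{B\,\fN\prod_j\fa_j^{-1}}$ with $B=\prod_{\beta'}\beta'$ (product over $(\O_F/\fp_n^n)^\times$), then $\alpha^n-\alpha_n^n=BCN$ with $C\in\prod_j\fa_j^{-1}$, $N\in\fN$; set $c:=\beta^{-1}BCN\in\fa_i^{-1}\fN$. The lower-triangular unipotent $\gamma_c=\begin{spmatrix}1&0\\ c&1\end{spmatrix}$ lies in $\Gamma$, and it sends the geodesic $\{\infty,b\}$ to $\{1/c,\,b/(bc+1)\}$. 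Since $\gamma_c\in\Gamma$ the first endpoint contributes nothing new, while
\[
\frac{b}{bc+1}\;=\;\frac{\beta\alpha_n^{-n}}{1+\alpha_n^{-n}BCN}\;=\;\frac{\beta}{\alpha_n^n+BCN}\;=\;\frac{\beta}{\alpha^n},
\]
so $\{\infty,b\}_\Gamma=\{\infty,\beta\alpha^{-n}\}_\Gamma=\xi^{(i)}_{n,\fp}(\beta)$. This single M\"obius step replaces your entire ``congruence-stability'' program; in particular the modulus $M$ is explicit ($B\fN\prod_j\fa_j^{-1}$), the matching $a\mapsto a'$ is just the identity on $\beta$, and the containment holds already at the level of relative symbols before passing to absolute homology.
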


\begin{proof} Let us choose an integer $n\geq 22$. By Theorem \ref{vertical:direction}, there is a prime ideal $\fp_{n}=\a_{n}\O_F$ of sufficiently large norm so that $N(\fp_{n})$ is coprime to $\fN\prod_{i=1}^{h_F}\fa_i^{-1}$ and $M_{n,\fp_{n}}$ is of full-rank in $H_1(X_1(\fN),\Z)$. 
Choose a generator $\xi$ of the module $M_{n,\fp_{n}}$. Note that we have $\xi=\xi^{(i)}_{n,\fp_{n}}(\beta)$ for some $\beta\in(\O_{F}/\fp_{n}^n)^\times$.
From Proposition \ref{adelic:modular:symbol:reprn} and our assumption on $\fp_{n}$, we obtain
$$
\xi=\xi^{(i)}_{n,\fp_n}(\beta)=\{\infty,\beta\a_{n}^{-n}\}_{\ovl{\Gamma^i_1(\fN)}}.
$$ 
Let us set $B=\prod_{\beta\in(\O_F/\fp_{n}^n)^\times}\beta$ and $\mathfrak{C}=\prod_{i=1}^{h_F}\fa_i^{-1}$. Let us define an arithmetic progression $\mathfrak{X}_n$ of ideals by $\mathfrak{X}_n:=\{\a\O_F:\a\in\a_{n}+B\mathfrak{C}\fN,\ N(\fp)\geq N(\fp_n)\}$. Let us choose a prime ideal $\fp=\a\O_F\in\mathfrak{X}_n$. Then we observe that $\fp$ is coprime to $\fa_i^{-1}$ for each $i$ so that the symbols $\xi_{n,\fp}^{(i)}(a)$ and modules $M_{n,\fp}$ are well defined. Also we have $\xi^{(i)}_{n,\fp}(\beta)=\{\infty,\beta\a^{-n}\}_{\ovl{\Gamma^i_1(\fN)}}$ and $\a^{n}=\a_{n}^{n}+BCN$ for some $C\in\mathfrak{C}$ and $N\in\fN$.

Consider the action of $\SL_2(F)$ on $Y_1(\fN)$. If $c\in\fa_i^{-1}\fN$, then $\gamma_c=\begin{spmatrix} 1 & 0 \\ c & 1 \end{spmatrix}\in\GL_2(F)$ and $\gamma_{c,\infty}\in\Gamma_1^i(\fN)$. For $b\in F$ and $y\in\R_{>0}^\times$, we have
$$
t_i(b;y)=\gamma_ct_i(b;y)=t_i\gamma_{c,\infty}(b;y)\begin{pmatrix} 1 & 0 \\ a_i c^{(\infty)} & 1 \end{pmatrix}=t_i\begin{pmatrix} 1 & 0 \\ c & 1 \end{pmatrix}_\infty(b;y)
$$
in $Y_1(\fN)$. Note that $\{\infty,b\}_{\Gamma_1^i(\fN)}$ is represented by the set $\{t_i(b;y):y\in\R_{>0}^\times\}$. Also note that the action of $\SL_2(\C)$ on $Y_1^i(\fN)$ preserve the metric, and 
$$
\begin{pmatrix} 1 & 0 \\ c & 1 \end{pmatrix}_\infty(b;y)\mapsto\bigg(\frac{\ovl{c}y^2+b\ovl{(bc+1)}}{|c|^2y^2+|bc+1|^2},\frac{y}{|c|^2y^2+|bc+1|^2}\bigg)
$$ 
under the bijection $\SL_2(\C)/\SU_2(\C)\rightarrow\mathscr{H}_3$.
Thus we have 
$$
\{\infty,b\}_{\ovl{\Gamma^i_1(\fN)}}=\Big\{\frac{1}{c},\frac{b}{bc+1}\Big\}_{\ovl{\Gamma^i_1(\fN)}}=\gamma_c\{\infty,0\}_{\ovl{\Gamma^i_1(\fN)}}+\{0,\infty\}_{\ovl{\Gamma^i_1(\fN)}}+\Big\{\infty,\frac{b}{bc+1}\Big\}_{\ovl{\Gamma^i_1(\fN)}}.
$$

Let us set $b=\beta\a_{n}^{-n}\in F^\times$ and $c=\beta^{-1}BCN\in\fN\prod_{i=1}\fa_i^{-1}$. Note that $\gamma_c\in\GL_2(F)$ and $\gamma_{c,\infty}\in\bigcap_{i=1}^{h_F}\Gamma_1^i(\fN)$. Then by the above equations, we obtain
$$
\xi=\xi_{n,\fp_{n}}^{(i)}(\beta)=\{\infty,b\}_{\ovl{\Gamma^i_1(\fN)}}=\{\infty,\beta\a^{-n}\}_{\ovl{\Gamma^i_1(\fN)}}=\xi_{n,\fp}^{(i)}(\beta),
$$
which is a generator of $M_{n,\fp}$.
Thus $M_{n,\fp_{n}}$ is a submodule of $M_{n,\fp}$, which finishes the proof.
\end{proof}

\begin{rem}\label{horizontal:module:indices}
Let us follow the notations in Theorem \ref{full:rank:horizontal}.
Let us define a positive integer as follows:
$$
\mathfrak{h}_n:=[H_1(X_1(\fN),\Z):M_{n,\fp_{n}}].
$$ 
Then by Theorem \ref{full:rank:horizontal}, the indices of $M_{n,\fp}$ in $H_1(X_1(\fN),\Z)$ divide $\mathfrak{h}_n$ for the prime ideals $\fp\in\mathfrak{X}_n$. Let $q$ be an odd prime number does not divide $\mathfrak{h}_n$. Then we have
$$
M_{n,\fp}\otimes_\Z\ovl{\mathbb{F}}_q=H_1(X_1(\fN),\ovl{\mathbb{F}}_q)
$$
for the prime ideals $\fp\in\mathfrak{X}_n$ by Theorem \ref{full:rank:horizontal} and the universal coefficient theorem.
\end{rem}

Let $f\in S_2(\fN,\chi)$ be a normalized Hecke eigenform and $\fp$ a prime ideal coprime to $\fa_i^{-1}$ for each $i$. For an open subset $Z\subset\O_{F,\fp}$, define a module 
$M^f_{n,\fp}(Z)$ by
$$
M^f_{n,\fp}(Z):=\langle a_f(\fp)\xi_n(r)-\chi(\varpi_\fp)\xi_{n-1}(r):\ r\in Z\rangle\cap H_1(X_1(\fN),\ovl{\Z}),
$$
which depends on $a_f(\fp)$. We denote $M^f_{\fp,n}(\O_{F,\fp})$ by $M^f_{\fp,n}$.
Then we have a variation of Theorem \ref{full:rank:horizontal}:

\begin{prop}\label{full:rank:horizontal:variation} 
Let us follow the notations in Theorem \ref{full:rank:horizontal} and Remark \ref{horizontal:module:indices}. Let $\fp$ be an $f$-ordinary prime ideal $\fp\in\mathfrak{X}_n$ coprime to $\mathfrak{h}_n$. Let us denote $p$ the odd rational prime lying below $\fp$.
Then we have $M^f_{n,\fp}\otimes_\Z\mathbb{F}_{p}=H_1(X_1(\fN),\ovl{\mathbb{F}}_{p})$.
\end{prop}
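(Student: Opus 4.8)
The plan is to bootstrap from Theorem~\ref{full:rank:horizontal}, in the form of Remark~\ref{horizontal:module:indices}, using that for an $\fp$-ordinary eigenform $M^f_{n,\fp}$ is obtained from the full-level module $M_{n,\fp}$ by an \emph{invertible} twist. Since $\fp\in\mathfrak{X}_n$ is coprime to $\mathfrak{h}_n$, the rational prime $p$ below $\fp$ does not divide $\mathfrak{h}_n$, so Remark~\ref{horizontal:module:indices} gives $M_{n,\fp}\otimes\ovl{\mathbb{F}}_p=H_1(X_1(\fN),\ovl{\mathbb{F}}_p)$. As $\ovl{\Z}_p$ is flat over $\ovl{\Z}$, it is enough to prove the inclusion $M_{n,\fp}\subseteq M^f_{n,\fp}\otimes_{\ovl{\Z}}\ovl{\Z}_p$ of submodules of $H_1(X_1(\fN),\ovl{\Z}_p)$; as both modules respect the decomposition $H_1(X_1(\fN),-)=\bigoplus_i H_1(X_1^i(\fN),-)$, we work one component $i$ at a time.

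The starting observation is the identity $\xi^{(i)}_{n-1,\fp}(r)=\xi^{(i)}_{n,\fp}(\varpi_\fp r)$ for $r\in\O_{F,\fp}$, since both sides are, by construction, the class in $X_1^i(\fN)$ attached to the finite adele $r\varpi_\fp^{-(n-1)}=(\varpi_\fp r)\varpi_\fp^{-n}$. Hence, in component $i$, the generators of $M^f_{n,\fp}$ are the classes $g^{(i)}_r:=a_f(\fp)\,\xi^{(i)}_{n,\fp}(r)-\chi(\varpi_\fp)\,\xi^{(i)}_{n,\fp}(\varpi_\fp r)$ with $r$ running over $\O_F/\fp^n$. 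Let $V$ be the free $\ovl{\Z}_p$-module on $\O_F/\fp^n$, with augmentation $\epsilon(\sum_r v_re_r)=\sum_r v_r$; let $\pi\colon V\to H_1(X_1^i(\fN),C_1^i(\fN),\ovl{\Z}_p)$ be $e_r\mapsto\xi^{(i)}_{n,\fp}(r)$ and $S\colon V\to V$ be $e_r\mapsto e_{\varpi_\fp r}$, so that $g^{(i)}_r=\pi\big((a_f(\fp)\id-\chi(\varpi_\fp)S)e_r\big)$. Then $S$ preserves $\epsilon$, in particular maps $V^0:=\ker\epsilon$ into itself, and $S^n(v)=\epsilon(v)\,e_0$, so $S|_{V^0}$ is nilpotent. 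Since $f$ is $\fp$-ordinary, $a_f(\fp)\in\ovl{\Z}_p^\times$, and $\chi(\varpi_\fp)$ is a root of unity hence also a unit; therefore $a_f(\fp)\id-\chi(\varpi_\fp)S$ is invertible on $V^0$ over $\ovl{\Z}_p$ (a finite Neumann series in $S$ gives the inverse), and so $V^0\subseteq(a_f(\fp)\id-\chi(\varpi_\fp)S)(V)$.

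The final ingredient is that combinations of the $\xi^{(i)}_{n,\fp}(a)$ lying in $H_1$ have vanishing augmentation: if $c=\sum_{a\in\O_{F,\fp}^\times}c_a\,\xi^{(i)}_{n,\fp}(a)$ lies in $H_1(X_1^i(\fN),\ovl{\Z})$, then $\sum_a c_a=0$. Indeed the denominator ideal of $r_{n,\fp}(a)$ is $\fp^n$, which is principal and coprime to $\fN$, while $\infty$ is $\ovl{\Gamma_1^i(\fN)}$-equivalent only to cusps whose denominator is divisible by $\fN$; hence no $r_{n,\fp}(a)$ is $\ovl{\Gamma_1^i(\fN)}$-equivalent to $\infty$, so the coefficient of the cusp $\{\infty\}$ in $\partial c=\sum_a c_a(\{\infty\}-\{r_{n,\fp}(a)\})$ equals $\sum_a c_a$, which vanishes by exactness of~(\ref{hom:mod:symb}) since $c$ is a cycle. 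Consequently, for $c\in M_{n,\fp}$ with $i$-th component $\sum_a c_a\,\xi^{(i)}_{n,\fp}(a)$, the lift $\tilde c:=\sum_a c_a e_a$ lies in $V^0\subseteq(a_f(\fp)\id-\chi(\varpi_\fp)S)(V)$; writing $\tilde c=(a_f(\fp)\id-\chi(\varpi_\fp)S)(\sum_r d_r e_r)$ and applying $\pi$ shows the $i$-th component of $c$ to be $\sum_r d_r\,g^{(i)}_r$. As $c\in H_1(X_1(\fN),\ovl{\Z}_p)$ and, by flatness, $M^f_{n,\fp}\otimes\ovl{\Z}_p=\langle g^{(i)}_r:i,r\rangle_{\ovl{\Z}_p}\cap H_1(X_1(\fN),\ovl{\Z}_p)$, we conclude $c\in M^f_{n,\fp}\otimes\ovl{\Z}_p$, which finishes the proof.

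I expect the delicate point to be the last step: the generators $g^{(i)}_r$ are only relative (not honest) cycles, so one cannot peel off term by term the tails $\xi^{(i)}_{n,\fp}(0)$ onto which $S^n$ collapses everything — it is only the full combination $c$, which has vanishing augmentation, whose tails telescope away so that the result lands back in $H_1$. Thus everything rests on the interplay between the nilpotence of $S$ on $V^0$ and the augmentation-vanishing of cycles (where the coprimality of $\fp$ with $\fN$ is used), while $\fp$-ordinarity enters only to make $a_f(\fp)\id-\chi(\varpi_\fp)S$ invertible on $V^0$.
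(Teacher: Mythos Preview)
Your proof is correct. Both you and the paper reduce to showing $M_{n,\fp}\otimes\ovl{\F}_p\subset M^f_{n,\fp}\otimes\ovl{\F}_p$ via Remark~\ref{horizontal:module:indices}, and both rely on the key fact that a cycle built from the $\xi^{(i)}_{n,\fp}(a)$ has vanishing coefficient sum (since the finite cusps $r_{n,\fp}(a)$ are inequivalent to $\infty$ when $\fp$ is coprime to $\fN$). The difference is in how the passage from $\xi_n$'s to the generators $g^{(i)}_r=a_f(\fp)\xi^{(i)}_{n}(r)-\chi(\varpi_\fp)\xi^{(i)}_{n-1}(r)$ is organized. The paper restricts to cycles supported on a single coset $a+\fp^{n-1}\O_{F,\fp}$: there $\xi^{(i)}_{n-1,\fp}(a+b)$ is constant in $b$, so the augmentation-zero condition yields $a_f(\fp)\xi=\sum_b N_b\,g^{(i)}_{a+b}$ in a single step, after which one ``sums over cosets $a$''. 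You instead work on all of $\O_F/\fp^n$ at once via the shift $S:e_r\mapsto e_{\varpi_\fp r}$, inverting $a_f(\fp)\id-\chi(\varpi_\fp)S$ on $V^0$ by a finite Neumann series. Your route is a bit more robust: the paper's ``summing over $a$'' tacitly uses that every cycle in $M_{n,\fp}^{(i)}$ decomposes as a sum of cycles each supported on a single $\fp^{n-1}$-coset, but the per-coset pieces $c^{(a)}=\sum_{r\equiv a}c_r\xi^{(i)}_{n,\fp}(r)$ need not themselves have vanishing boundary, so this decomposition is not automatic. Your global inversion on $V^0$ avoids this issue entirely, while the paper's one-step computation has the virtue of being completely explicit when it applies.
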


\begin{proof} 
For each open subset $Z\subset\O_{F,\fp}^\times$, let us denote $M_{n,\fp}^{(i)}(Z):=\langle \xi_{n,\fp}^{(i)}(r): r\in Z\rangle\cap H_1(X_1^i(\fN),\Z)$ and $M^{f,(i)}_{n,\fp}(Z):=M^{f}_{n,\fp}(Z)\cap H_1(X_1^i(\fN),\ovl{\Z})$. For $a\in (\O_F/\fp^{n-1})^\times$, let us choose an element $\xi\in M^{(i)}_{n,\fp}(a+\fp^{n-1}\O_{F,\fp})$. Then $\xi=\sum_{b\in\fp^{n-1}/\fp^n}N_b\xi^{(i)}_{n,\fp}(a+b)$, where $N_b$ is a non-negative integer. Then by the proof of Theorem \ref{full:rank:horizontal}, we have
$$
\xi^{(i)}_{n,\fp}(a+b)=\Big\{\infty,\frac{a+b}{\a^{n}}\Big\}_{\ovl{\Gamma^i_1(\fN)}},
$$
where $\a$ is a generator of an ideal $\fp$ of $F$. 
Taking the boundary map $\partial$ on the symbol $\xi$, we obtain 
$$
\sum_b N_b\Big\{\frac{a+b}{\a^{n}}\Big\}_{\ovl{\Gamma^i_1(\fN)}}=\sum_b N_b\{\infty\}_{\ovl{\Gamma^i_1(\fN)}}.
$$
Note that the above cusps are not equivalent by the definition of $\Gamma_1^i(\fN)$ and $\mathfrak{X}_n$. Thus we have $\sum_b N_b=0$. From this relation, we obtain
$$
a_f(\fp)\xi=a_f(\fp)\sum_{b}N_r\xi^{(i)}_{n,\fp}(a+b)=\sum_{b}N_b\big(a_f(\fp)\xi^{(i)}_{n,\fp}(a+b)-\chi(\varpi_\fp)\xi^{(i)}_{n-1,\fp}(a)\big),
$$
which implies that 
$$
M^{(i)}_{n,\fp}(a+\fp^{n-1}\O_{F,\fp})\otimes_\Z\ovl{\F}_{p}\subset M^{f,(i)}_{n,\fp}(a+\fp^{n-1}\O_{F,\fp})\otimes_\Z\F_{p}\subset H_1(X^i_1(\fN),\ovl{\F}_{p}).
$$
By using Remark \ref{horizontal:module:indices} and summing the spaces over $a$ and $i$, we conclude the proof.
\end{proof}

\section{Cohomology classes and special $L$-values}\label{conj:eisenstein:special}
In this section, we define integral $L$-values of Hecke eigenforms and study their residual non-vanishing. Also we compare the integral $L$-values and algebraic $L$-values.

\subsection{Special $L$-values}
In this section, we follow Namikawa \cite[Section 2]{Namikawa} to define the cohomology classes of integral coefficients and the integral $L$-values. 

Let $p$ be a odd prime number coprime to $h_FD_F$ and $\fp$ a prime ideal of $F$ lying above $p$. Let $f\in S_k(\fN,\chi)$ be a normalized Hecke eigenform, and $\O=\Z_p[\{a_f(\fl):\fl\}]$. For a cohomology class $\delta\in H_*^1(Y_1(\fN),R)$, let us denote by $\delta_i$ the image of $\delta$ under the map $H_*^1(Y_1(\fN),\O)\rightarrow H_*^1(Y_1^i(\fN),R)$, where $*\in\{\operatorname{c},\text{par}\}$ and $R=\O$, $\ovl{\F}_p$ or $\C$. Let $\delta:S_2\big(U_1(\fN)\big)\rightarrow H^1_{\text{par}}(Y_1(\fN),\C)$ be the generalized Eichler-Shimura isomorphism defined by $\delta(f):=\sum_{i=1}^{h_F}\delta^{(i)}(f_{(i)})$. 

Let us fix an isomorphism $\C\cong\C_p$, then we have the scalar extension map $H^1(Y_1(\fN),\O)\rightarrow H^1(Y_1(\fN),\C)$.
Let us recall that there is a section $\iota$ of the map $H^1_{\operatorname{c}}(Y_1(\fN),\C)\rightarrow H^1_{\text{par}}(Y_1(\fN),\C)$. 
Define the \emph{cuspidal cohomology group} of coefficient $\O$ by 
$$
H^1_{\text{cusp}}(Y_1(\fN),\O):=\operatorname{Im}(\iota)\cap H^1_{\operatorname{c}}(Y_1(\fN),\O),
$$
where the intersection is defined via the scalar extension map. Then the $f$-eigenspace $H^1_{\text{cusp}}(Y_1(\fN),\O)[f]$ is a free $\O$-module of rank one. Fix a $\O$-free generator $\eta_{f,{\operatorname{c}}}$ of $H^1_{\text{cusp}}(Y_1(\fN),\O)[f]$, which is determined up to multiplication by $\O^\times$. We define the \emph{Hida's canonical period} $\Omega_{f,{\operatorname{c}}}\in\C^\times$ by the number which satisfies $\iota\big(\delta(f)\big)=\Omega_{f,{\operatorname{c}}}\eta_{f,{\operatorname{c}}}$. 

For a finite order Hecke character $\phi$ whose conductor $\mathfrak{f}(\phi)$ is coprime to $\fa_i^{-1}$, let us define a \emph{twisted modular symbol} $\Lambda^{(i)}(\phi)$ by
$$
\Lambda^{(i)}(\phi):=\sum_{u\in R_{\mathfrak{f}(\phi)}} \phi(a_i)\phi(\varpi_{\mathfrak{f}(\phi)}u)\xi_{u}^{(i)}\in H_1(X_1^i(\fN),C_1^i(\fN),\Z[\phi]) .
$$
\begin{rem}
By Namikawa \cite[Lemma 2.2]{Namikawa}, a relative homology classes $\Lambda^{(1)}(\phi)$ can be lifted to a homology class when $h_F=1$.
\end{rem}

When $\mathfrak{f}(\phi)$ is coprime to $\fa_i^{-1}$ for each $i$, we define the \emph{integral L-value} $\mathcal{L}_f(\phi)$ by
$$
\mathcal{L}_f(\phi):=\sum_{i=1}^{h_F}\Lambda^{(i)}(\phi)\cap\eta_{f,\operatorname{c},i}\in\O[\phi].
$$
Then by the equation (\ref{special:value:of:twisted:modular:L:functions}) and (\ref{integral:evaluation}), we have
$$
\mathcal{L}_f(\phi)=\sum_{i=1}^{h_F}\Lambda^{(i)}(\phi)\cap\frac{\iota\big(\delta^{(i)}(f_{(i)})\big)}{\Omega_{f,\operatorname{c}}}=\frac{|D_F||\O_F^\times| G(\phi)L(1,f\otimes\phi)}{8\pi^2\Omega_{f,{\operatorname{c}}}}\in\mathcal{O}[\phi].
$$

Similar with the case of $H_c^1$, the $f$-eigenspace $H^1_{\text{par}}(Y_1(\fN),\O)^\p[f]$ is free $\O$-module of rank one by Hida \cite[Section 8]{hida1994critical}. Fix a free generator $\eta_{f}$ of $H^1_{\text{par}}(Y_1(\fN),\O)^\p[f]$, which is determined up to multiplication by $\O^\times$. We define the \emph{period} $\Omega_{f}\in\C^\times$ by a complex number such that $\delta(f)=\Omega_{f}\eta_{f}$. Note that we have the \emph{algebraic L-value} $L_f(\phi)$ of $f$ by Hida \cite[Theorem 8.1]{hida1994critical}: 
$$
L_f(\phi):=\frac{|D_F||\O_F^\times| G(\phi)L(1,f\otimes\phi)}{8\pi^2\Omega_{f}}\in\ovl{\Q}.
$$
To compare the $L$-values $\mathcal{L}_f(\phi)$ and $L_f(\phi)$, we have to compare the periods $\Omega_{f,\operatorname{c}}$ and $\Omega_{f}$.
We have the following lemma about the Hecke action on the cohomology classes:
\begin{lem}\label{cohomology:hecke:action} Let $j$ be the integer such that $\fa_i\fp^{-1}=\fa_j$ in $\operatorname{Cl}(F)$. Then we have 
$$
T_\fp\eta_{f,i}=a_f(\fp)\eta_{f,j},\ T_\fp\eta_{f,\operatorname{c},i}=a_f(\fp)\eta_{f,\operatorname{c},j}.
$$
\end{lem}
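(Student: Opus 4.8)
The plan is to deduce the lemma from two facts. First, $\eta_f$ and $\eta_{f,\co}$ are \emph{global} generators of the rank-one $f$-eigenspaces on $Y_1(\fN)$, so $T_\fp$ acts on each of them by the scalar $a_f(\fp)$. Second, the Hecke correspondence $T_\fp$ respects the decomposition $H^1_*(Y_1(\fN),\O)=\bigoplus_k H^1_*(Y_1^k(\fN),\O)$, carrying the $k$-th summand into the $k'$-th one, where $\fa_{k'}=\fa_k\fp^{-1}$. Projecting the identity $T_\fp\eta_f=a_f(\fp)\eta_f$ onto the $\fa_j$-component then reads off $T_\fp\eta_{f,i}=a_f(\fp)\eta_{f,j}$, and likewise for $\eta_{f,\co}$.

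First I would record that, since $\fp$ is coprime to $\fN$, the eigenform relation reads $T_\fp f=a_f(\fp)f$ on $\GL_2(\A_F)$. By the Hecke equivariance of the generalized Eichler--Shimura isomorphism $\delta$ (Proposition \ref{eichler:shimura:harder}) and of the section $\iota$ (Remark \ref{pairings:compatible}), this yields $T_\fp\delta(f)=a_f(\fp)\delta(f)$ in $H^1_{\text{par}}(Y_1(\fN),\C)$ and $T_\fp\iota(\delta(f))=a_f(\fp)\iota(\delta(f))$ in $H^1_{\co}(Y_1(\fN),\C)$. Because $\eta_f$ spans the rank-one $f$-eigenspace of $H^1_{\text{par}}(Y_1(\fN),\O)^\p$ and $\eta_{f,\co}$ spans that of $H^1_{\text{cusp}}(Y_1(\fN),\O)$, and $T_\fp$ preserves these integral structures, it follows that $T_\fp\eta_f=a_f(\fp)\eta_f$ and $T_\fp\eta_{f,\co}=a_f(\fp)\eta_{f,\co}$.

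The substantive step is to pin down the block structure of $T_\fp$. Here I would invoke the strong-approximation isomorphism of Section \ref{sp:mod:symb}, under which the $k$-th component corresponds to the ideal class $[\fa_k]=[\det(g)]$ in $\operatorname{Cl}(F)\cong F^\times\backslash\A_F^\times/\C^\times\widehat{\O}_F^\times$, together with the double-coset description of $T_\fp$ attached to $U_1(\fN)\begin{spmatrix}\varpi_\fp&0\\0&1\end{spmatrix}U_1(\fN)$ (Hida \cite{hida1994critical}). Running a point $g=\gamma t_j g^\p h$ through a coset representative multiplies $\det$ by $\varpi_\fp$ modulo $\widehat{\O}_F^\times$, so it passes from the component of class $[\fa_j]$ to the component of class $[\fa_j\fp]$; dually, on cohomology $T_\fp$ carries $H^1_*(Y_1^k(\fN),R)$ into $H^1_*(Y_1^{k'}(\fN),R)$ with $\fa_{k'}=\fa_k\fp^{-1}$, and $k\mapsto k'$ is a bijection. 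On each such pair of components $T_\fp$ is, up to twisting by an explicit finite idele, the usual $\fp$-Hecke correspondence on a Bianchi modular manifold, so it preserves the integral structures, the compactly supported classes, and the image of $\iota$. I expect this bookkeeping to be the main obstacle: one must unwind the adelic definition of $T_\fp$ carefully enough to confirm the direction of the class shift (that it is $\fp^{-1}$, not $\fp$, given the paper's normalizations of $T_\fp$ and of the strong-approximation isomorphism) and to check that the integral Hecke operator on $H^1_{\co}(Y_1(\fN),\O)$ built from correspondences is the one whose eigenvalue on $f$ is the Fourier coefficient $a_f(\fp)$.

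Finally I would combine the two steps. Writing $\eta_f=\sum_k\eta_{f,k}$ with $\eta_{f,k}\in H^1_{\text{par}}(Y_1^k(\fN),\O)^\p$, the block structure shows that the $\fa_j$-component of $T_\fp\eta_f$ is $T_\fp\eta_{f,i}$ for the unique index $i$ with $\fa_i\fp^{-1}=\fa_j$, while the $\fa_j$-component of $a_f(\fp)\eta_f$ is $a_f(\fp)\eta_{f,j}$; comparing the two gives $T_\fp\eta_{f,i}=a_f(\fp)\eta_{f,j}$. Repeating the argument verbatim with $\eta_{f,\co}=\sum_k\eta_{f,\co,k}$ inside $\bigoplus_k H^1_{\text{cusp}}(Y_1^k(\fN),\O)$ yields $T_\fp\eta_{f,\co,i}=a_f(\fp)\eta_{f,\co,j}$.
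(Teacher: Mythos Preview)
Your argument is correct and is a genuinely different route from the paper's. The paper does not argue via the block structure of $T_\fp$ on the direct-sum decomposition of cohomology. Instead, it works on the automorphic side: it introduces the Hilbert-character projector $\pi_i(f):=h_F^{-1}\sum_{\psi\in H_F}\psi^{-1}(a_i)\,f\otimes\psi$, checks that $\delta(\pi_i(f))=\delta^{(i)}(f_{(i)})$, and then computes directly $T_\fp\pi_i(f)=h_F^{-1}\sum_\psi a_f(\fp)\psi(\varpi_\fp)\psi^{-1}(a_i)\,f\otimes\psi=a_f(\fp)\pi_j(f)$; dividing by the period and applying $\iota$ finishes. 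So the paper isolates the $i$-th component by averaging over Hilbert characters on cusp forms, whereas you isolate it by appealing to the geometry of the Hecke correspondence on the components of $Y_1(\fN)$. Your approach is conceptually cleaner and avoids the twist $f\otimes\psi$, but it puts the burden on verifying the determinant bookkeeping (that $T_\fp$ shifts the class by $\fp^{-1}$ on cohomology with the paper's conventions); the paper in effect carries out that same bookkeeping later in the proof of Proposition~\ref{padic:measure}, so both routes ultimately rest on the same adelic computation.
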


\begin{proof} 
Let us set $\pi_i(f):=h_F^{-1}\sum_{\psi\in H_F}\psi^{-1}(a_i)f\otimes\psi.$ 
By the definition of $f\otimes\psi$, we have
$$
\delta\big(\pi_i(f)\big)=\frac{1}{h_F}\sum_{\psi\in H_F}\sum_{j=1}^{h_F}\psi^{-1}(a_i)\delta^{(j)}\big((f\otimes\psi)_{(j)}\big)=\delta^{(i)}(f_{(i)}).
$$
Also we have
$$
T_\fp\pi_i(f)=\frac{1}{h_F}\sum_{\psi\in H_F}a_f(\fp)\psi^{-1}(\varpi_\fp^{-1} a_i)f\otimes\psi=a_f(\fp)\pi_j(f).
$$
From the above equations, we have
$$
T_\fp\eta_{f,\operatorname{c},i}=T_\fp\frac{\iota\big(\delta^{(i)}(f_{(i)})\big)}{\Omega_{f,\operatorname{c}}}=\frac{\iota\big(T_\fp\delta(\pi_i(f))\big)}{\Omega_{f,\operatorname{c}}}=a_f(\fp)\eta_{f,\operatorname{c},j}.
$$
Note that we can do the same process for $\eta_f$, so we are done.
\end{proof}

Let us introduce the \emph{non-Eisenstein condition}:
$$
(\mathbf{Non\text{-}Eis})\ a_f(q\O_F)-N(q)-1\in\O^\times\text{ for some prime element }q\equiv 1\ (\text{mod }p\fN).
$$
Then this condition implies following:
\begin{lem}\label{periods:compare} Assume $(\mathbf{Non\text{-}Eis})$, then $\Omega_{f}/\Omega_{f,{\operatorname{c}}}\in\O^\times$.
\end{lem}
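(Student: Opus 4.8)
The plan is to unwind the two periods and reduce the statement to the assertion that, after localizing cohomology at the maximal ideal of the Hecke algebra attached to $f \bmod p$, the canonical map $H^1_{\operatorname{c}}(Y_1(\fN),\O)\to H^1_{\text{par}}(Y_1(\fN),\O)$ becomes an isomorphism; then the Hecke-equivariant section $\iota$ is forced to be its inverse, so it identifies the two integral structures on the $f$-part. Concretely, from $\iota(\delta(f))=\Omega_{f,\operatorname{c}}\eta_{f,\operatorname{c}}$, $\delta(f)=\Omega_f\eta_f$, and the $\C$-linearity of $\iota$ one gets $\Omega_f\,\iota(\eta_f)=\Omega_{f,\operatorname{c}}\,\eta_{f,\operatorname{c}}$ in the (one-dimensional) space $H^1_{\operatorname{c}}(Y_1(\fN),\C)[f]$. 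Hence it suffices to prove that $\iota(\eta_f)$ is an $\O$-basis of $H^1_{\text{cusp}}(Y_1(\fN),\O)[f]$: then $\iota(\eta_f)=u\,\eta_{f,\operatorname{c}}$ with $u\in\O^\times$, and $\Omega_f/\Omega_{f,\operatorname{c}}=u^{-1}\in\O^\times$.

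Let $\fm=\fm_f$ be the maximal ideal of the Hecke algebra acting on $H^*(Y_1(\fN),\O)$ determined by the reductions of the eigenvalues $a_f(\fl)$. I would first record that the Borel--Serre boundary $\partial Y_1(\fN)^*$ is a disjoint union of $2$-tori, one for each cusp of $X_1(\fN)$, so that every system of Hecke eigenvalues occurring in $H^*(\partial Y_1(\fN)^*,\O)$ is of Eisenstein (Borel-induced) type, coming from a pair of Hecke characters of conductor dividing $\fN$; likewise $H^0(Y_1(\fN),\O)\cong\O$ carries the Eisenstein eigensystem. For any prime element $q\equiv 1\ (\text{mod }p\fN)$ such eigensystems have $T_{q\O_F}$ acting by an operator that is $\equiv N(q)+1\ (\text{mod }p)$, whereas $(\mathbf{Non\text{-}Eis})$ says precisely that $a_f(q\O_F)-N(q)-1\in\O^\times$ for some such $q$, i.e.\ $T_{q\O_F}-N(q)-1\notin\fm$. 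Therefore $\fm$ lies in the support of neither $H^*(\partial Y_1(\fN)^*,\O)$ nor $H^0(Y_1(\fN),\O)$, so both localize to zero at $\fm$.

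Localizing the exact sequence $(\ref{cohom:mod:symb})$ at $\fm$ then gives an $\O$-module isomorphism $H^1_{\operatorname{c}}(Y_1(\fN),\O)_\fm\xrightarrow{\ \sim\ }H^1_{\text{par}}(Y_1(\fN),\O)_\fm$, which restricts to an isomorphism $H^1_{\operatorname{c}}(Y_1(\fN),\O)[f]\cong H^1_{\text{par}}(Y_1(\fN),\O)^\p[f]$ of rank-one $\O$-modules. Since $\iota$ is a Hecke-equivariant $\C$-linear section of the natural map, on the $f$-part it must coincide with the inverse of this isomorphism; hence $\iota$ carries the $\O$-basis $\eta_f$ of $H^1_{\text{par}}(Y_1(\fN),\O)^\p[f]$ to an $\O$-basis of $H^1_{\operatorname{c}}(Y_1(\fN),\O)[f]$, and because $\iota$ surjects onto this localized eigenspace and $H^1_{\text{cusp}}(Y_1(\fN),\O)=\operatorname{Im}(\iota)\cap H^1_{\operatorname{c}}(Y_1(\fN),\O)$, that eigenspace is exactly $H^1_{\text{cusp}}(Y_1(\fN),\O)[f]$. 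This establishes the reduction made in the first paragraph and finishes the proof.

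I expect the main obstacle to be the input of the second paragraph: making precise the action of $T_{q\O_F}$ on $H^*(\partial Y_1(\fN)^*,\O)$ in terms of Hecke characters of conductor dividing $\fN$, and checking that $(\mathbf{Non\text{-}Eis})$ genuinely separates $\fm_f$ from every boundary eigensystem, including at the level of $p$-torsion (which is the reason the congruence in $(\mathbf{Non\text{-}Eis})$ is imposed modulo $p\fN$ rather than $\fN$). For this I would lean on Namikawa \cite{Namikawa} and the standard description of the cusps of a Bianchi manifold; the remaining steps are formal localization together with the linear algebra of $\iota$.
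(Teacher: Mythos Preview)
Your approach is essentially the same strategy as the paper's, dressed in the language of localization at the maximal ideal $\fm_f$ rather than applied via a single Hecke operator. The paper fixes the prime $q$ from $(\mathbf{Non\text{-}Eis})$, sets $\mathcal{E}_q:=T_{q\O_F}-N(q)-1$, and uses Namikawa's computation (that $\mathcal{E}_q$ annihilates the image of $H^0(\partial Y_1(\fN)^*,\O)$ in $H^1_{\operatorname{c}}$) directly: lifting $\eta_f$ to some $\eta\in H^1_{\operatorname{c}}(Y_1(\fN),\O)$, one gets $\alpha\,\mathcal{E}_q\eta_1=\epsilon_q\,\eta_{f,\operatorname{c},1}$ with $\epsilon_q=a_f(q\O_F)-N(q)-1\in\O^\times$, and then a second integrality relation $\mathcal{E}_q\eta_1=\beta\,\eta_{f,\operatorname{c},1}$ forces $\alpha\beta=\epsilon_q$, so $\alpha=\Omega_f/\Omega_{f,\operatorname{c}}\in\O^\times$. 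Your localization argument is the abstract repackaging of exactly this: ``$\mathcal{E}_q$ kills the boundary'' is precisely ``$H^0(\partial Y_1(\fN)^*,\O)_{\fm_f}=0$'', and the paper's two-line unit computation is the concrete witness that $\mathrm{par}$ is an isomorphism on the $f$-part. Both routes rest on the same Namikawa input you flag.

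One small point to tighten: when you pass from the localized isomorphism $H^1_{\operatorname{c}}(Y_1(\fN),\O)_{\fm}\xrightarrow{\sim}H^1_{\text{par}}(Y_1(\fN),\O)_{\fm}$ to ``an isomorphism $H^1_{\operatorname{c}}(Y_1(\fN),\O)[f]\cong H^1_{\text{par}}(Y_1(\fN),\O)^{\prime}[f]$'', the prime (torsion-free part) on the right is not automatic from localization alone; you should either note that the localized isomorphism already forces the $f$-eigenspace on the compact-support side (hence on the parabolic side) to be $\O$-free of rank one, or simply argue with $H^1_{\text{par}}(Y_1(\fN),\O)[f]$ and observe a posteriori that it is torsion-free. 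The paper sidesteps this by working with the explicit elements $\eta_{f,1}$ and $\eta_{f,\operatorname{c},1}$ and the scalar identity $\alpha\beta=\epsilon_q$, which is slightly more elementary; your formulation is cleaner conceptually but needs that extra sentence about torsion.
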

\begin{proof} We follow the proof of Namikawa \cite[Proposition 2.6]{Namikawa}. By our assumption, there is a prime element $q\equiv 1\ (\text{mod }p\fN)$ of $F$ such that $\epsilon_q:=a_f(q\O_F)-N(q)-1\in\O^\times$. Let us denote by $\mathcal{E}_q$ the operator $T_{q\O_F}-N(q)-1$. 

From the sequence (\ref{cohom:mod:symb}), we obtain a Hecke-equivariant exact sequence:
\begin{equation*}
\begin{tikzcd} H^0(\partial Y_1(\fN)^*,\O) \arrow{r}{{\operatorname{c}}} & H^1_{\operatorname{c}}(Y_1(\fN),\O) \arrow{r}{\text{par}} & H^1_{\text{par}}(Y_1(\fN),\O) \arrow{r} & 0. \end{tikzcd}
\end{equation*}

We can consider $\eta_f$ to be the element of $H_{\text{par}}^1(Y_1(\fN),\O)$ or $H_{\text{par}}^1(Y_1(\fN),\C)$. Since the map $\text{par}$ is Hecke-equivariant, $\text{par}(\eta_{f,{\operatorname{c}}})=\a\eta_f$ for some $\a\in\O$ by the definition of $\eta_f$. Choose a class $\eta\in H^1_{\operatorname{c}}(Y_1(\fN),\O)$ such that $\text{par}(\eta)=\eta_f$. 
Then we have $\mathcal{E}_q(\eta_{f,{\operatorname{c}},1}-\a\eta_1)=0$ by Namikawa \cite[Lemma 2.3]{Namikawa}. Since $\eta_{f,{\operatorname{c}}}$ is an element of the eigenspace, we obtain $\a\mathcal{E}_q\eta_1=\epsilon_q\eta_{f,{\operatorname{c}},1}$ by Lemma \ref{cohomology:hecke:action}, which is a $\O$-genetator of $H_{\operatorname{cusp}}^1(Y_1^1(\fN),\O)[f]$. 
As $\mathcal{E}_q\eta_1\in H_{\operatorname{c}}^1(Y_1^1(\fN),\O)$, we have $\mathcal{E}_q\eta_1=\beta\eta_{f,{\operatorname{c}},1}$ for some $\beta\in\O$. Taking the map par to the above equation, we obtain $\epsilon_q\eta_{f,1}=\text{par}(\mathcal{E}_q\eta_1)=\a\beta\eta_{f,1}$. Note that $\eta_{f,1}$ is non-trivial since $f$ is an eigenform. Thus $\a$ is a $p$-adic unit and 
$$
\frac{\Omega_f}{\Omega_{f,{\operatorname{c}}}}\eta_f=\text{par}(\eta_{f,{\operatorname{c}}})=\a\eta_f.
$$ 
Therefore, the ratio $\Omega_{f}/\Omega_{f,{\operatorname{c}}}$ is a $p$-adic unit.
\end{proof}

From Lemma \ref{periods:compare}, the ratio of $\mathcal{L}_f(\phi)$ and $L_f(\phi)$ is a $p$-adic unit under the assumption (\textbf{Non-Eis}). Also we can compare the pairings (\ref{lefschetz:poincare:pairing}) and (\ref{lefschetz:poincare:pairing:2}) for $R=\O$.

\subsection{Residual non-vanishing}
In this subsection, we add assumption on $p$ that $F$ does not contain the primitive $p$-th power roots and $p$ is coprime to $|\O_F^\times|\fN$. Let $\mathfrak{M}_p$ be the conductor of the extension $F(\zeta_p)/F$ and $\fM:=\fM_p\fN$. 
Note that by the straightforward computation, we have
$$
\ovl{\Gamma_1^i(\fN)}=\bigg\{\begin{pmatrix} a & b \\ c & d \end{pmatrix}\in\SL_2(\O_F):a,d\in 1+\fN,\ b\in\fa_i,\ c\in\fN\fa_i^{-1}\ \bigg\}.
$$

Let us denote $S(\mathfrak{M}_p,\fN)$ the set of pairs $(b,d)$ of elements $b,d\in F$ such that $b$ is coprime to $d\mathfrak{M}_p$ and $d\equiv 1\ (\text{mod }\mathfrak{M})$. From Namikawa \cite[Lemma 3.2]{Namikawa}, for any $(b,d)\in S(\mathfrak{M}_p,\fN)$, there is an infinite set $P_{b,d}$ of prime elements of $F$ such that for each $\pi\in P_{b,d}$, we have
\begin{enumerate}
\item $\big\{0,\frac{b}{d}\big\}_{\ovl{\Gamma_1^1(\fN)}}=\big\{0,\frac{b}{\pi}\big\}_{\ovl{\Gamma_1^1(\fN)}}$.
\item $\pi\in 1+\fN$.
\item $N(\pi)-1$ is coprime to $p$.
\item $N(\pi)-1\neq |\O_F^\times|$.
\end{enumerate}
Let us denote by $\mathfrak{Y}$ the set of finite order Hecke characters over $F$ such that if $\psi\in\mathfrak{Y}$, then the conductor of $\psi$ is $\pi\O_F$ for some $\pi\in\bigcup_{(b,d)\in S(\mathfrak{M}_p,\fN)}P_{b,d}$. Note that we have the following conjecture on the generalization of the result of Stevens \cite{stevens1958cuspidal}:
\begin{conj}\label{homology:generate:padic:unit:general}
Assume $(\mathbf{Non\text{-}Eis})$. Then there exist infinitley many Hecke characters $\phi$ such that $\mathcal{L}_f(\phi)$ is a $p$-adic unit.
\end{conj}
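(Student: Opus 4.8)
The natural plan is to deduce the ``infinitely many'' assertion from the full-rankness results of Section~\ref{section:full:rank} together with an orthogonality-of-characters argument of the type used by Stevens \cite{stevens1958cuspidal} and Kim--Sun \cite{kim2017modular}; when $h_F=1$ this should recover and reorganize Namikawa's theorem. The first step converts ``$p$-adic unit'' into ``nonzero modulo $p$''. Under $(\mathbf{Non\text{-}Eis})$, Lemma~\ref{periods:compare} gives $\Omega_f/\Omega_{f,\operatorname{c}}\in\O^\times$, so $\mathcal{L}_f(\phi)$ and $L_f(\phi)$ differ by a $p$-adic unit, and $\mathcal{L}_f(\phi)=\sum_{i=1}^{h_F}\Lambda^{(i)}(\phi)\cap\eta_{f,\operatorname{c},i}\in\O[\phi]$ is a $p$-adic unit if and only if its reduction $\sum_{i}\overline{\Lambda^{(i)}(\phi)}\cap\overline{\eta_{f,\operatorname{c},i}}$ is nonzero in $\ovl{\F}_p$. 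From the proof of Lemma~\ref{periods:compare} one also sees that $\overline{\eta_{f,\operatorname{c}}}$ maps to a nonzero class in $H^1_{\operatorname{par}}(Y_1(\fN),\ovl{\F}_p)$, so it does not annihilate every homology class mod $p$.

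The second step inputs Theorem~\ref{full:rank:horizontal} and Remark~\ref{horizontal:module:indices}: fixing $n\geq 22$ with $p\nmid\mathfrak{h}_n$ (a point to be arranged, e.g.\ by varying $n$), every prime ideal $\fp$ in the arithmetic progression $\mathfrak{X}_n$ satisfies $M_{n,\fp}\otimes_\Z\ovl{\F}_p=H_1(X_1(\fN),\ovl{\F}_p)$, so the reductions $\overline{\xi^{(i)}_{n,\fp}(a)}$ span the homology mod $p$. Combined with the first step and the non-degeneracy of the reduced Lefschetz--Poincar\'e pairing, this shows that the ``coefficient function'' $c^{(\fp)}_{i,a}:=\overline{\xi^{(i)}_{n,\fp}(a)}\cap\overline{\eta_{f,\operatorname{c},i}}\in\ovl{\F}_p$ is not identically zero on $\{1,\dots,h_F\}\times(\O_F/\fp^n)^\times$. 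The third step then expands the reduction of $\mathcal{L}_f(\phi)$: for a Hecke character $\phi$ of conductor $\fp^n$, the definition of $\Lambda^{(i)}(\phi)$ gives $\mathcal{L}_f(\phi)\equiv\sum_{i}\phi(a_i)\sum_{r}\phi_\fp(r)\,c^{(\fp)}_{i,r}\pmod p$, a class-group-weighted combination of the finite Fourier coefficients of the functions $r\mapsto c^{(\fp)}_{i,r}$ over $(\O_F/\fp^n)^\times$. Running $\phi$ over all such characters and applying orthogonality of the characters of the ray class group of conductor $\fp^n$, one would extract some $\phi$ with $\mathcal{L}_f(\phi)\not\equiv 0\pmod p$; carrying this out for infinitely many $\fp\in\mathfrak{X}_n$ (infinitely many exist by Chebotarev) produces characters of pairwise coprime conductors, hence infinitely many admissible $\phi$, which would complete the proof. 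If one additionally wants $\phi\in\mathfrak{Y}$, one would align the progressions using the cusp relations $\{0,\tfrac{b}{d}\}_{\ovl{\Gamma_1^1(\fN)}}=\{0,\tfrac{b}{\pi}\}_{\ovl{\Gamma_1^1(\fN)}}$ of Namikawa \cite[Lemma 3.2]{Namikawa}, though the conjecture as stated does not demand this.

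The main obstacle is in the third step: the weights $\phi(a_i)$ and the local component $\phi_\fp$ are not free parameters, since a genuine Hecke character is trivial on $F^\times$, hence on the image of the global units $\O_F^\times$ in $(\O_F/\fp^n)^\times$. Consequently the character sum detects only the average of $c^{(\fp)}_{\bullet,\bullet}$ over the orbits of $\O_F^\times$ acting on cusps (and over the class group), not an arbitrary nonzero function, so the mere nonvanishing of $c^{(\fp)}_{\bullet,\bullet}$ does not suffice — one must show the unit-averaged function is nonzero. For $h_F=1$ these symmetries are mild enough that Namikawa's finer analysis of the cusp relations controls them; for general $h_F$, doing so without the auxiliary twist $\otimes\psi$ used in Theorem~C appears to require genuinely new input, which is why the statement is offered here as a conjecture rather than proved.
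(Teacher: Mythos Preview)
The statement you are attempting to prove is Conjecture~\ref{homology:generate:padic:unit:general}, which the paper explicitly leaves open; there is no proof in the paper to compare against. What the paper does establish is the strictly weaker Theorem~\ref{homology:generate:padic:unit} (\emph{some} $\phi\in\mathfrak{Y}$, not infinitely many $\phi$), and its argument is quite different from the strategy you outline. Rather than invoking the full-rankness results of Section~\ref{section:full:rank}, the paper works directly with the surjection $\operatorname{pr}^1:\ovl{\Gamma_1^1(\fN)}\to H_1(X_1^1(\fN),\Z)$: an arbitrary $\gamma\in\Gamma_1^1(\fM)$ is modified by a parabolic element to force $(b,d)\in S(\fM_p,\fN)$, Namikawa's cusp relation $\{0,b/d\}=\{0,b/\pi\}$ is applied, and then an average over the finite character group $C_\pi$ (followed by an average over $H_F$ to isolate the $i=1$ component) reduces the assumed vanishing of all $\mathcal{L}_f(\phi)$ modulo $p$ to the triviality of $\eta_{f,1}$ modulo $p$, contradicting nondegeneracy.

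Your own diagnosis of the obstruction to upgrading this to the full conjecture is accurate: the characters $\phi$ at your disposal are ray-class characters, not arbitrary characters of $(\O_F/\fp^n)^\times\times\operatorname{Cl}(F)$, so Fourier inversion only recovers $c^{(\fp)}_{i,r}$ averaged over global-unit orbits and weighted by $\psi(a_i)$ for $\psi\in H_F$. This is exactly why the paper introduces the twist $f\otimes\psi$ in Theorem~C and enlarges $M_{n,\fl,0}$ to $M_{n,\fl}$ (cf.\ the ``Technical Difficulties'' paragraph in the introduction). One further gap you pass over lightly: the condition $p\nmid\mathfrak{h}_n$ is not obviously achievable by ``varying $n$'' once $p$ has been fixed by the hypothesis $(\mathbf{Non\text{-}Eis})$; in Section~\ref{p:adic:l:function:mu:invariants} this is handled by \emph{choosing} $p$ coprime to $\mathfrak{h}_n$, an option not available here. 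Your sketch thus correctly isolates both the natural line of attack and the reasons it does not yet yield a proof.
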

By using the result of Namikawa \cite{Namikawa}, we prove the following partial result toward the conjecture \ref{homology:generate:padic:unit:general}:

\begin{thm}\label{homology:generate:padic:unit}
Assume $(\mathbf{Non\text{-}Eis})$, 
then $\mathcal{L}_f(\phi)$ is a $p$-adic unit for some Hecke characters $\phi\in\mathfrak{Y}$.
\end{thm}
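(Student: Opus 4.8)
The plan is to reduce the statement, which concerns the full congruence subgroups $\ovl{\Gamma_1^i(\fN)}$ attached to all ideal classes, to the class-number-one situation already treated by Namikawa \cite{Namikawa}, and then to exploit the fact that $\mathcal{L}_f(\phi)$ is built as a sum $\sum_{i=1}^{h_F}\Lambda^{(i)}(\phi)\cap\eta_{f,\operatorname{c},i}$ of contributions indexed by $i$. First I would fix the prime $p$ satisfying all the hypotheses (odd, coprime to $h_F D_F|\O_F^\times|\fN$, $F$ containing no primitive $p$-th roots) so that $\fM_p$ and $\fM=\fM_p\fN$ are defined, and recall from Lemma \ref{periods:compare} that under $(\mathbf{Non\text{-}Eis})$ the ratio $\Omega_f/\Omega_{f,\operatorname{c}}$ is a $p$-adic unit; hence it suffices to produce a Hecke character $\phi\in\mathfrak{Y}$ for which the algebraic $L$-value $L_f(\phi)$ — equivalently, after clearing the unit $\Omega_f/\Omega_{f,\operatorname{c}}$, the integral $L$-value $\mathcal{L}_f(\phi)$ — is a $p$-adic unit. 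The key input is Namikawa's result: for a prime element $\pi$ lying in $\bigcup_{(b,d)\in S(\fM_p,\fN)}P_{b,d}$, conditions (1)--(4) guarantee that the relative modular symbol $\Lambda^{(1)}(\phi)$ attached to a Hecke character $\phi$ of conductor $\pi\O_F$ lifts to a genuine homology class and that, for a suitable choice among such $\pi$, the resulting $\mathcal{L}_f(\phi)$ is a $p$-adic unit when $h_F=1$.

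The main work is to transport this from the $i=1$ component to the adelic setting with general $h_F$. I would use the twisted modular symbol $\Lambda^{(i)}(\phi)=\sum_{u\in R_{\fq}}\phi(a_i)\phi(\varpi_{\fq}u)\xi_u^{(i)}$ together with the Hecke-translation identities of the proof of Theorem \ref{full:rank:horizontal}: an element $c\in\fa_i^{-1}\fN$ gives $\gamma_c=\begin{spmatrix}1&0\\c&1\end{spmatrix}\in\GL_2(F)$ with $\gamma_{c,\infty}\in\Gamma_1^i(\fN)$, which lets one move cusps $\{0,b/d\}_{\ovl{\Gamma_1^i(\fN)}}$ to cusps with prime denominator $\{0,b/\pi\}_{\ovl{\Gamma_1^i(\fN)}}$, exactly paralleling \cite[Lemma 3.2]{Namikawa} but now for each $i$ simultaneously by working in $Y_1(\fN)$ and choosing $c$ in the common ideal $\fN\prod_i\fa_i^{-1}$. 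This produces, for each $(b,d)\in S(\fM_p,\fN)$, a common infinite set $P_{b,d}$ of prime elements $\pi$ so that conditions (1)--(4) hold at every component $i$ at once. Then $\mathcal{L}_f(\phi)=\sum_{i}\Lambda^{(i)}(\phi)\cap\eta_{f,\operatorname{c},i}$, and using Lemma \ref{cohomology:hecke:action} (which records $T_\fp\eta_{f,\operatorname{c},i}=a_f(\fp)\eta_{f,\operatorname{c},j}$ with $\fa_j=\fa_i\fp^{-1}$) one rewrites the whole sum in terms of the single class $\eta_{f,\operatorname{c},1}$ twisted by Hilbert characters; concretely, via the projectors $\pi_i(f)=h_F^{-1}\sum_{\psi\in H_F}\psi^{-1}(a_i)f\otimes\psi$ from that lemma's proof, the contribution of the $i$-th piece is the $\psi$-averaged version of Namikawa's $h_F=1$ computation applied to $f\otimes\psi$, and the Gauss-sum normalization $G(\phi)$ together with the integrality $\mathcal{L}_f(\phi)\in\O[\phi]$ keeps everything in $\O[\phi]$.

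I would then finish by a $p$-adic unit count: the failure set of characters $\phi$ of conductor $\pi\O_F$ with $p\mid\mathcal{L}_f(\phi)$ is constrained by a fixed (independent of $\pi$) power of $\mathfrak{h}_n$-type index and by Namikawa's non-vanishing modulo $p$, so among the infinitely many $\pi\in\bigcup_{(b,d)}P_{b,d}$ at least one yields $\mathcal{L}_f(\phi)\in\O^\times$; this $\phi$ lies in $\mathfrak{Y}$ by definition. The step I expect to be the genuine obstacle is the coherent choice of the prime elements $\pi$ across all $h_F$ components — one must check that Namikawa's four conditions (especially (3), coprimality of $N(\pi)-1$ to $p$, and the lifting condition (1) that makes $\Lambda^{(i)}(\phi)$ an honest homology class rather than merely a relative one) can be arranged uniformly in $i$, which is where the non-trivial class group genuinely intrudes and where one must lean on the adelic reformulation and the common-denominator trick from Theorem \ref{full:rank:horizontal} rather than on \cite{Namikawa} directly.
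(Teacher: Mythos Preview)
Your instinct to reduce to the $i=1$ component and invoke Namikawa's $h_F=1$ result is correct, but the reduction mechanism you propose has a genuine gap, and the paper's actual reduction is both simpler and different.

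The problematic part is your final paragraph. The ``$p$-adic unit count'' constrained by ``a fixed power of $\mathfrak{h}_n$-type index'' is not an argument: the integer $\mathfrak{h}_n$ comes from Theorem~\ref{full:rank:horizontal} and concerns the index of $M_{n,\fp_n}$ in $H_1(X_1(\fN),\Z)$, which plays no role here. Likewise, your plan to use Lemma~\ref{cohomology:hecke:action} and the projectors $\pi_i(f)$ to ``rewrite the whole sum in terms of the single class $\eta_{f,\operatorname{c},1}$'' is not carried out, and it is not clear how Hecke operators alone let you express $\sum_i\Lambda^{(i)}(\phi)\cap\eta_{f,\operatorname{c},i}$ purely through the $i=1$ piece. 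Finally, your program of choosing prime elements $\pi$ coherently across all $h_F$ components is unnecessary overhead once you see the paper's trick.

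The paper's reduction is a one-line orthogonality argument you are missing. The set $\mathfrak{Y}$ is closed under multiplication by Hilbert characters $\psi\in H_F$ (Hilbert characters have trivial conductor, so $\psi\phi$ has the same conductor as $\phi$). Hence if $\mathcal{L}_f(\phi')\equiv 0\pmod{\varpi}$ for every $\phi'\in\mathfrak{Y}$, then for any fixed $\phi\in\mathfrak{Y}$
\[
0\equiv\frac{1}{h_F}\sum_{\psi\in H_F}\mathcal{L}_f(\psi\phi)
=\sum_{i=1}^{h_F}\Big(\frac{1}{h_F}\sum_{\psi\in H_F}\psi(a_i)\Big)\Lambda^{(i)}(\phi)\cap\eta_{f,\operatorname{c},i}
=\Lambda^{(1)}(\phi)\cap\eta_{f,\operatorname{c},1}\pmod{\varpi},
\]
since $p\nmid h_F$ and the inner character sum vanishes unless $i=1$. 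This immediately reduces the full statement to showing that $\Lambda^{(1)}(\phi)\cap\eta_{f,\operatorname{c},1}$ is a $p$-adic unit for some $\phi\in\mathfrak{Y}$, which is exactly Namikawa's argument on the single component $X_1^1(\fN)$ (and which the paper reproves directly using $\operatorname{pr}^1$, parabolic modification, and orthogonality of characters over $C_\pi$). No simultaneous choice of $\pi$ across components, no $\mathfrak{h}_n$, and no counting are needed.
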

\begin{proof}
Let us recall that $\operatorname{pr}^1:\ovl{\Gamma^1_1(\fN)}\rightarrow H_1(X_1^1(\fN),\Z)$ is a surjective homomorphism defined by $\gamma\mapsto\{0,\gamma\cdot 0\}_{\ovl{\Gamma^1_1(\fN)}}$, which is trivial for the parabolic elements. First, let us assume that $\Lambda^{(1)}(\phi)\cap\eta_{f,c,1}=0$ in $\ovl{\F}_p$ for any $\phi\in\mathfrak{Y}$. Choose $\gamma=\begin{spmatrix} a & b \\ c & d \end{spmatrix}\in\Gamma_{1}^1(\mathfrak{M})$. Then we can modify $\gamma$ by using the parabolic element $\begin{spmatrix} 1 & t-b \\ 0 & 1 \end{spmatrix}$ of $\Gamma_{1}^1(\mathfrak{M})$:
$$
\begin{pmatrix} a & b \\ c & d \end{pmatrix}\begin{pmatrix} 1 & t-b \\ 0 & 1 \end{pmatrix}=\begin{pmatrix} a & b+a(t-b) \\ c & d+c(t-b) \end{pmatrix}\in\Gamma_{1}^1(\mathfrak{M}),
$$
where $t$ is an element of $\O_F$ coprime to $\mathfrak{M}_p$. Then $b+a(t-b)\in t+\fM_p$, thus we can assume that $b$ is coprime to $\fM_p$. Let us choose $\pi\in P_{b,d}$. Let $C_\pi$ be the set of finite order Hecke characters whose conductor dividing $\pi\O_F$. Then $|C_\pi|$ divides $h_F(N(\pi)-1)$, thus $|C_\pi|$ is a $p$-adic unit. By the orthogonality of characters, we have
\begin{align*}
\operatorname{pr}^1(\gamma)=\Big\{0,\frac{b}{\pi}\Big\}_{\ovl{\Gamma^1_1(\fN)}}=\frac{1}{|C_{\pi}|}\sum_{\mathbf{1}\neq\phi\in C_{\pi}}\ovl{\phi}(b_{\pi\O_F})\Lambda^{(1)}(\phi)+\frac{1}{|C_{\pi}|}\sum_{u\in R_{\pi\O_F}}\xi_{u}^{(1)}
\end{align*} 
in $H_1(X_1^1(\fN),C_1^1(\fN),\O[\phi])$. Note that $C_\pi\subset\mathfrak{Y}$ by the definition. Therefore, by the equation (\ref{pairing:equality}), Lemma \ref{periods:compare} and our assumption, we have 
$$
\operatorname{pr}^1(\gamma)\cap\eta_{f,1}=\Big\{0,\frac{b}{\pi}\Big\}_{\ovl{\Gamma^1_1(\fN)}}\cap\frac{\Omega_{f,\operatorname{c}}}{\Omega_{f}}\eta_{f,{\operatorname{c}},1}=\frac{1}{|C_{\pi}|}\sum_{u\in R_{\pi\O_F}}\xi_{u}^{(1)}\cap\frac{\Omega_{f,\operatorname{c}}}{\Omega_{f}}\eta_{f,{\operatorname{c}},1}
$$
in $\ovl{\F}_p$, which is independent on $b$. Note that $\begin{spmatrix} 1 & 1 \\ \pi-1 & \pi \end{spmatrix}\in\Gamma_1^1(\fN)$, thus we have
$$
\operatorname{pr}^1(\gamma)\cap\eta_{f,1}=\Big\{0,\frac{1}{\pi}\Big\}_{\ovl{\Gamma^1_1(\fN)}}\cap\frac{\Omega_{f,\operatorname{c}}}{\Omega_{f}}\eta_{f,{\operatorname{c}},1}=\operatorname{pr}^1\Big(\begin{spmatrix} 1 & 0 \\ \pi-1 & 1 \end{spmatrix}\begin{spmatrix} 1 & 1 \\ 0 & 1 \end{spmatrix}\Big)\cap\frac{\Omega_{f,\operatorname{c}}}{\Omega_{f}}\eta_{f,{\operatorname{c}},1}=0
$$
in $\ovl{\F}_p$ since $\begin{spmatrix} 1 & 0 \\ \pi-1 & 1 \end{spmatrix}$ and $\begin{spmatrix} 1 & 1 \\ 0 & 1 \end{spmatrix}$ are parabolic elements of $\Gamma_1^1(\fN)$. By the property of the map pr$^1$ and Namikawa \cite[Corollary A.2]{Namikawa}, we have $c_1\cap\eta_{f,1}=0$ in $\ovl{\F}_p$ for each $c_1\in H_1(X_1^1(\fN),\Z)$. Then from the extension of coefficients $H_1(X_1(\fN),\Z)\otimes\ovl{\F}_p\cong H_1(X_1(\fN),\ovl{\F}_p)$ and the Poincar\'{e} duality, we conclude that $\eta_{f,1}$ is trivial, which is a contradiction since $f$ is an eigenform. Therefore, $\Lambda^{(1)}(\phi)\cap\eta_{f,{\operatorname{c}},1}$ is a $p$-adic unit for some $\phi\in\mathfrak{Y}$. 

Let us assume the contrary of Theorem \ref{homology:generate:padic:unit}, i.e., assume that $\mathcal{L}_f(\phi)=0$ in $\ovl{\F}_p$ for any $\phi\in\mathfrak{Y}$. Note that the conductor of $\psi\phi$ and $\phi$ are same for each $\psi\in H_F$, thus we have $\psi\phi\in\mathfrak{Y}$ for $\psi\in H_F$ and $\phi\in\mathfrak{Y}$. Therefore, we have
\begin{align*}
0&=\frac{1}{h_F}\sum_{\psi\in H_F}\mathcal{L}_f(\psi\phi)=\sum_{i=1}^{h_F}\bigg(\frac{1}{h_F}\sum_{\psi\in H_F}\psi(a_i)\bigg)\sum_{u\in R_{\mathfrak{f}(\phi)}} \phi(a_i)\phi(\varpi_{\mathfrak{f}(\phi)} u)\xi_u^{(i)}\cap\eta_{f,{\operatorname{c}},i} \\
&=\sum_{u\in R_{\mathfrak{f}(\phi)}} \phi(a_1)\phi(\varpi_{\mathfrak{f}(\phi)} u)\xi_u^{(1)}\cap\eta_{f,{\operatorname{c}},1}=\Lambda^{(1)}(\phi)\cap\eta_{f,{\operatorname{c}},1}
\end{align*}
in $\ovl{\F}_p$ for any $\phi\in\mathfrak{Y}$,
which is a contradiction.
\end{proof}

\section{$p$-adic $L$-functions and $\mu$-invariants}\label{p:adic:l:function:mu:invariants}
In this section, we define the Bianchi modular version of Mazur-Tate-Teitelbaum $p$-adic $L$-function (see \cite{mazur1986padic}). Also we prove the partial result toward the Bianchi modular version of Greenberg's conjecture. 

\subsection{$p$-adic integral}
Let $m>0$ be an integer. Let us fix a normalized eigenform $f\in S_2(\fN,\chi)$. Let $p$ be a odd prime number coprime to the following ideal 
$$
h_FD_F|\O_F^\times|\mathfrak{h}_n\fN\prod_{i=1}^{h_F}\fa_i^{-1},
$$
where $\mathfrak{h}_n$ is the integer defined in the Remark \ref{horizontal:module:indices}.
Let $\fp$ be an ordinary 
prime ideal lying above $p$. Let us denote $W_\fp$ the torsion subgroup of $\O_{F,\fp}^\times$. Let $\a_{f,\fp}$ be the $p$-adic unit root of the equation $x^2-a_f(\fp)x+\chi(\varpi_\fp)N(\fp)=0$. Let us set $\phi$ by a finite order Hecke character of $\fp$-power conductor. Let $\O:=\Z_p[\a_{f,\fp},\mu_{h_F},\phi,\chi]\Z_p[\{a_f(\fl):\fl\}]$. Let $\varpi$ be a uniformizer of $\O$. 
Define a function $\nu_f$ on the set of the basic open subsets of $\O_{F,\fp}$ as follows:
$$
\nu_f(a+\fp^m\O_{F,\fp}):=\a_{f,\fp}^{-m}\sum_{i=1}^{h_F}\big(\xi^{(i)}_{m}(a)-\chi(\varpi_\fp)\a_\fp^{-1}\xi^{(i)}_{m-1}(a)\big)\cap\eta_{f,{\operatorname{c}},i}\in\O.
$$
Then we have following:
\begin{prop}\label{padic:measure}
$\nu_f$ is a $p$-adic measure on $\O_{F,\fp}$.
\end{prop}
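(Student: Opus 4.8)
The plan is to verify the two defining properties of a $p$-adic measure: finite additivity on basic open subsets and boundedness of the values $\nu_f(a+\fp^m\O_{F,\fp})$ as $m\to\infty$. The additivity is the algebraic heart of the matter and rests on the standard ``distribution relation'' for the Hecke operator $U_\fp$ acting on modular symbols; the boundedness is then a formal consequence of the first property together with the fact that $\a_{f,\fp}$ is a $p$-adic unit.

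First I would establish additivity. Fix $a\in\O_{F,\fp}$ and $m>0$; I must show
$$
\nu_f(a+\fp^m\O_{F,\fp})=\sum_{b\bmod\fp}\nu_f\big((a+b\varpi_\fp^m)+\fp^{m+1}\O_{F,\fp}\big),
$$
where $b$ runs over a set of representatives of $\O_{F,\fp}/\fp$. Unwinding the definition, the right-hand side equals
$$
\a_{f,\fp}^{-(m+1)}\sum_{i=1}^{h_F}\sum_{b\bmod\fp}\Big(\xi^{(i)}_{m+1}(a+b\varpi_\fp^m)-\chi(\varpi_\fp)\a_{f,\fp}^{-1}\xi^{(i)}_{m}(a+b\varpi_\fp^m)\Big)\cap\eta_{f,{\operatorname{c}},i}.
$$
The key input is the identity, valid in $H_1(X_1^i(\fN),C_1^i(\fN),\Z)$,
$$
\sum_{b\bmod\fp}\xi^{(i)}_{m+1}(a+b\varpi_\fp^m)=\big(U_\fp\xi^{(i)}_{m}\big)(a)+\text{(boundary-cusp terms)},
$$
which comes from writing $\xi^{(i)}_{m}(a)=\{\infty,r_{m,\fp}(a)\}$ and decomposing the $U_\fp$-coset representatives $\begin{spmatrix}1 & j\varpi_\fp \\ 0 & \varpi_\fp\end{spmatrix}$ as in the classical Stevens/Manin calculus; the analogous identity for overconvergent Bianchi symbols appears in Williams \cite{williams2017padic}, and the weight $(2,2)$ case we need is exactly the adelic shadow of Proposition \ref{adelic:modular:symbol:reprn}. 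Now, since $\eta_{f,{\operatorname{c}}}$ lies in the $f$-eigenspace and $f$ is a Hecke eigenform, $U_\fp$ acts on $\eta_{f,{\operatorname{c}},\bullet}$ by $a_f(\fp)$ (up to the permutation of the index $i$ recorded in Lemma \ref{cohomology:hecke:action}); combining this with the Hecke-adjointness of the cap product and the fact that $\a_{f,\fp}$ satisfies $x^2-a_f(\fp)x+\chi(\varpi_\fp)N(\fp)=0$, the $U_\fp$-contribution on the right collapses precisely to $\a_{f,\fp}^{-m}\sum_i\big(\xi^{(i)}_{m}(a)-\chi(\varpi_\fp)\a_{f,\fp}^{-1}\xi^{(i)}_{m-1}(a)\big)\cap\eta_{f,{\operatorname{c}},i}$, i.e. to $\nu_f(a+\fp^m\O_{F,\fp})$. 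The boundary-cusp terms pair trivially with $\eta_{f,{\operatorname{c}}}$ because $\eta_{f,{\operatorname{c}}}$ is a cuspidal (parabolic) class, so they drop out; this is why the Manin--Drinfeld/Kur{\v c}anov input of Remark \ref{pairings:compatible} is invoked.

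Second, for boundedness: $\nu_f$ takes values in the compact ring $\O$, but I want values in a fixed $\O$-lattice independent of $m$. By construction $\xi^{(i)}_{m}(a)$ is an integral relative homology class and $\eta_{f,{\operatorname{c}},i}$ is an integral cohomology class, so each pairing $\big(\xi^{(i)}_{m}(a)-\chi(\varpi_\fp)\a_{f,\fp}^{-1}\xi^{(i)}_{m-1}(a)\big)\cap\eta_{f,{\operatorname{c}},i}$ lies in $\O$; since $\a_{f,\fp}\in\O^\times$, multiplication by $\a_{f,\fp}^{-m}$ preserves $\O$, so $\nu_f(a+\fp^m\O_{F,\fp})\in\O$ uniformly in $m$. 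Finite additivity plus uniform boundedness is exactly the statement that $\nu_f$ extends (uniquely) to a $p$-adic measure on $\O_{F,\fp}$ by the standard Mazur--Mellin construction.

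The main obstacle is the distribution relation in the first step: one must check carefully that, under the adelic normalization of the symbols $\xi^{(i)}_{m,\fp}$ used here (and the twisting by the class-group representatives $\fa_i$), the decomposition of $\sum_{b}\xi^{(i)}_{m+1}(a+b\varpi_\fp^m)$ really produces the $U_\fp$-operator on the nose, with the correct normalization $\chi(\varpi_\fp)N(\fp)$ on the ``old'' term, and that the spurious cusp contributions are annihilated by pairing against the cuspidal class $\eta_{f,{\operatorname{c}}}$ rather than merely pushing the problem into $H_0(C_1^i(\fN))$. This is where the hypotheses that $\fp$ is ordinary (so that $\a_{f,\fp}$ is a $p$-adic unit and the $p$-stabilization is well-behaved) and coprime to $h_FD_F|\O_F^\times|\mathfrak{h}_n\fN\prod_i\fa_i^{-1}$ (so that the symbols $\xi^{(i)}_{m,\fp}$ and the Hecke action are defined and integral) are genuinely used.
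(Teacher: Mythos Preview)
Your overall strategy---derive the distribution relation from the Hecke action on modular symbols, then note boundedness from $\a_{f,\fp}\in\O^\times$---is the same as the paper's. But two points in your execution diverge from what the paper actually does and would need correction.

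First, the relevant Hecke operator is $T_\fp$, not $U_\fp$: here $\fp\nmid\fN$, so there is no $U_\fp$ at level $U_1(\fN)$. The paper uses the full double-coset decomposition
\[
T_\fp f(g)=f\big(g\,r(\fp)\big)+\sum_{c\bmod\fp}f\Big(g\begin{spmatrix}\varpi_\fp & c\\ 0 & 1\end{spmatrix}\Big),\qquad r(\fp)=\begin{spmatrix}1 & 0\\ 0 & \varpi_\fp\end{spmatrix},
\]
and computes explicit adelic matrix identities (decomposing $\gamma_{\a_i}t_i(0;y)t(a/\varpi_\fp^m)r(\fp)$ and $\gamma_{\beta_i}t_i(0;y)t(a/\varpi_\fp^m)\begin{spmatrix}\varpi_\fp & c\\0&1\end{spmatrix}$ back into $t_{\sigma(i)}(\cdots)$ and $t_{\tau(i)}(\cdots)$, where $\sigma,\tau$ are the class-group permutations $\fa_i\fp^{\mp1}=\fa_{\sigma(i)},\fa_{\tau(i)}$). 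Via the integral representation (\ref{integral:evaluation}) this yields the exact three-term relation
\[
a_f(\fp)\,\xi_m^{(i)}(a)\cap\iota(\delta^{(i)}(f_{(i)}))=\chi(\varpi_\fp)\,\xi_{m-1}^{(\sigma(i))}(a)\cap\iota(\delta^{(\sigma(i))}(f_{(\sigma(i))}))+\sum_{c\bmod\fp}\xi_{m+1}^{(\tau(i))}(a+\varpi_\fp^m c)\cap\iota(\delta^{(\tau(i))}(f_{(\tau(i))})).
\]
Summing over $i$ and using the quadratic relation for $\a_{f,\fp}$ (so that $a_f(\fp)-\chi(\varpi_\fp)N(\fp)\a_{f,\fp}^{-1}=\a_{f,\fp}$) gives the additivity $\sum_{b}\nu_f(a+b\varpi_\fp^m+\fp^{m+1})=\nu_f(a+\fp^m)$ directly.

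Second, there are no ``boundary-cusp terms'' to kill: the identity above holds on the nose at the level of the integrals, so you do not need to invoke cuspidality of $\eta_{f,\operatorname{c}}$ or Manin--Drinfeld here. Your step appealing to Remark \ref{pairings:compatible} to annihilate spurious cusp contributions is unnecessary in this argument. The paper in fact only records the distribution identity and stops (calling $\nu_f$ a $p$-adic distribution); your second paragraph on boundedness is correct and supplies what the paper leaves implicit.
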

\begin{proof}
Note that there are $h_F$-permutations $\sigma$ and $\tau$ such that 
$$
\a_{i}\fa_i\fp^{-1}=\fa_{\sigma(i)},\ \beta_{i}\fa_i\fp=\fa_{\tau(i)}.
$$
Note that $\tau=\sigma^{-1}$ and $\beta_{\sigma(i)}=\a_{i}^{-1}$.
Let us denote by $r(\fp)$ the matrix $\begin{spmatrix} 1 & 0 \\ 0 & \varpi_\fp \end{spmatrix}$. For $\a\in F^\times$, let us denote $\gamma_\a:=\begin{spmatrix} \a & 0 \\ 0 & 1 \end{spmatrix}$ and $w_\a:=\a^{(\infty)}\varpi_{\a\O_F}^{-1}\in\widehat{\O}_F^\times$. As $(a_i)_\fp=1$ for each $i$, we have $(w_{\a_i})_\fp=(w_{\beta_i})_\fp=1$. Then by Proposition \ref{adelic:modular:symbol:reprn}, we obtain that
\begin{align*}
&\gamma_{\a_i}t_i(0;y)t\Big(\frac{a}{\varpi_\fp^m}\Big)r(\fp)=\a_{i,\infty}^{\frac{1}{2}}\varpi_\fp t_{\sigma(i)}(0;|\a_{i}|y)\Big(0;\frac{\a_{i}}{|\a_{i}|}\Big)t\Big(\frac{a}{\varpi_\fp^{m-1}}\Big)\begin{pmatrix} w_{\a_i} & 0 \\ 0 & 1 \end{pmatrix}, \\
&\gamma_{\beta_i}t_i(0;y)t\Big(\frac{a}{\varpi_\fp^m}\Big)\begin{pmatrix} \varpi_\fp & c \\ 0 & 1 \end{pmatrix}=\beta_{i,\infty}^{\frac{1}{2}}t_{\tau(i)}(0;|\beta_{i}|y)\Big(0;\frac{\beta_{i}}{|\beta_{i}|}\Big)t\Big(\frac{a+\varpi_\fp^m c}{\varpi_\fp^{m+1}}\Big)\begin{pmatrix} w_{\beta_i} & 0 \\ 0 & 1 \end{pmatrix}
\end{align*}
in $\GL_2(\A_F)$ for each $c\in\O_{F,\fp}$. Note that $\begin{spmatrix} w_{\a_i} & 0 \\ 0 & 1 \end{spmatrix}$, $\begin{spmatrix} w_{\beta_i} & 0 \\ 0 & 1 \end{spmatrix}\in U_1(\fN)$.
From the definition of the Hecke operator $T_\fp$ (Hida \cite[Section 4]{hida1994critical}), $T_\fp(f)$ can be written as
$$
T_\fp(f)(g):=f\big(gr(\fp)\big)+\sum_{c\text{ mod }{\fp}}f\bigg(g\begin{pmatrix} \varpi_\fp & c \\ 0 & 1 \end{pmatrix}\bigg).
$$
Together with the equation (\ref{integral:evaluation}) and the above equations, we obtain
\begin{align*}
a_f(\fp)\xi_{m}^{(i)}(a)\cap\iota\big(\delta^{(i)}(f_{(i)})\big)=&\chi(\varpi_\fp)\xi_{m-1}^{(\sigma(i))}(a)\cap\iota\big(\delta^{(\sigma(i))}(f_{(\sigma(i))})\big) \\
&+\sum_{c\text{ mod }{\fp}}\xi_{m+1}^{(\tau(i))}(a+\varpi_\fp^m c)\cap\iota\big(\delta^{(\tau(i))}(f_{(\tau(i))})\big).
\end{align*}
From the above equation, we obtain
$$
\sum_{b\equiv a\text{ mod }\fp^m}\nu_f(b+\fp^{m+1}\O_{F,\fp})=\nu_f(a+\fp^{m}\O_{F,\fp}),
$$
which implies that $\nu_f$ is a $p$-adic distribution on $\O_{F,\fp}$.
\end{proof}

Let $\langle \cdot\rangle:\O_{F,\fp}^\times\rightarrow 1+\fp\O_{F,\fp}$ be the projection map. Let us define the $p$-\emph{adic L-function} $L_\fp(s,f,\phi)$ of $f$ by the Mazur-Mellin transformation of $\nu_f$:
$$
L_\fp(s,f,\phi):=\int_{\O_{F,\fp}^\times}\langle x\rangle^{-s}\phi(x)d\nu_f(x) .
$$
Then our $p$-adic $L$-function interpolates the integral $L$-values $\mathcal{L}_f(\phi)$: 
\begin{prop}\label{integral:lvalue:interpolation}
If the Hilbert character part of $\phi$ is trivial and the conductor of $\phi$ is $\fp^m$ for $m\geq 1$, then we have
$$
L_\fp(0,f,\phi)=\a_{\fp}^{-m}\mathcal{L}_f(\phi).
$$
\end{prop}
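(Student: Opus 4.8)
The plan is to compute the Mazur--Mellin transform $L_\fp(s,f,\phi)=\int_{\O_{F,\fp}^\times}\langle x\rangle^{-s}\phi(x)\,d\nu_f(x)$ directly at $s=0$. Since $\langle x\rangle^{-s}\big|_{s=0}=1$ identically on $\O_{F,\fp}^\times$, we get $L_\fp(0,f,\phi)=\int_{\O_{F,\fp}^\times}\phi(x)\,d\nu_f(x)$ with $\nu_f$ the $p$-adic measure of Proposition \ref{padic:measure}. Because the conductor of $\phi$ is exactly $\fp^m$, the restriction of $\phi$ to $\O_{F,\fp}^\times$ is a primitive character of $(\O_F/\fp^m)^\times$, hence locally constant on each coset $a+\fp^m\O_{F,\fp}$; so the $p$-adic integral of this locally constant function collapses to a finite Riemann sum, and after inserting the definition of $\nu_f$,
\begin{align*}
L_\fp(0,f,\phi)&=\sum_{a\in(\O_F/\fp^m)^\times}\phi(a)\,\nu_f\big(a+\fp^m\O_{F,\fp}\big)\\
&=\a_{f,\fp}^{-m}\sum_{i=1}^{h_F}\sum_{a}\phi(a)\big(\xi^{(i)}_{m}(a)-\chi(\varpi_\fp)\a_\fp^{-1}\xi^{(i)}_{m-1}(a)\big)\cap\eta_{f,\operatorname{c},i}.
\end{align*}

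Next I would show that the $\xi^{(i)}_{m-1}$-terms contribute nothing. The symbol $\xi^{(i)}_{m-1}(a)=\xi^{(i)}_{a/\varpi_\fp^{m-1}}$ depends on $a$ only through its class modulo $\fp^{m-1}$ (for $m=1$ it is the constant class $\{\infty,0\}_{\ovl{\Gamma^i_1(\fN)}}$). Regrouping $\sum_{a\in(\O_F/\fp^m)^\times}\phi(a)\,\xi^{(i)}_{m-1}(a)$ along the fibres of the surjection $(\O_F/\fp^m)^\times\to(\O_F/\fp^{m-1})^\times$, each inner sum $\sum_{a\equiv a_0\,(\fp^{m-1})}\phi(a)$ vanishes by orthogonality of characters, since $\phi$ is a nontrivial character of $1+\fp^{m-1}\O_{F,\fp}$ modulo $1+\fp^{m}\O_{F,\fp}$ (the conductor being exactly $\fp^m$, $m\geq 1$). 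Using the identification $\a_\fp=\a_{f,\fp}$ of the $p$-adic unit root, we are thus reduced to proving $\sum_{a\in(\O_F/\fp^m)^\times}\phi(a)\,\xi^{(i)}_m(a)=\Lambda^{(i)}(\phi)$ for each $i$, after which $L_\fp(0,f,\phi)=\a_\fp^{-m}\sum_i\Lambda^{(i)}(\phi)\cap\eta_{f,\operatorname{c},i}=\a_\fp^{-m}\mathcal{L}_f(\phi)$.

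For that last identity I would invoke the bijection $R_{\fp^m}\rightarrow(\widehat{\O}_F/\widehat{\fp^m})^\times$, $u\mapsto\varpi_{\fp^m}u+\widehat{\fp^m}$, together with the defining relation $\xi^{(i)}_{m,\fp}(a)=\xi^{(i)}_{a/\varpi_\fp^m}$ and Proposition \ref{adelic:modular:symbol:reprn}, to match the symbol $\xi^{(i)}_u$ occurring in $\Lambda^{(i)}(\phi)=\sum_{u\in R_{\fp^m}}\phi(a_i)\phi(\varpi_{\fp^m}u)\xi^{(i)}_u$ with $\xi^{(i)}_m(a)$ for $a=\varpi_{\fp^m}u$, and then reconcile the character values: $\phi(\varpi_{\fp^m}u)$ is the local component $\phi_\fp(a)$, while the remaining twist $\phi(a_i)$ must be absorbed by the $t_i$-normalisation implicit in $\xi^{(i)}_{m,\fp}$ (via the factor $t_i$ in $t_i(0;y)t(u)$) together with the hypothesis that the Hilbert-character part of $\phi$ is trivial, which — given $p\nmid h_F$ and $\fa_i$ coprime to $\fp$ — leaves no residual class-group factor. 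I expect this final bookkeeping step, keeping straight the idelic versus local normalisations of $\phi$, the $\fa_i$-twist, and the precise role of the trivial-Hilbert-part hypothesis, to be the only genuinely delicate point; the reduction to a finite sum and the vanishing of the $\xi_{m-1}$-terms are routine.
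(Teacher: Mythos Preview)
Your approach is correct and essentially identical to the paper's: both compute $L_\fp(0,f,\phi)$ as the finite Riemann sum $\sum_{a\in(\O_F/\fp^m)^\times}\phi(a)\nu_f(a+\fp^m\O_{F,\fp})$, kill the $\xi^{(i)}_{m-1}$-terms by regrouping along the fibres of $(\O_F/\fp^m)^\times\to(\O_F/\fp^{m-1})^\times$ and invoking orthogonality (the paper writes this as $\sum_{d\bmod\fp}\phi(1+b^{-1}d\varpi_\fp^{m-1})=0$), and identify what remains with $\a_{f,\fp}^{-m}\mathcal{L}_f(\phi)$. You are slightly overcomplicating the last bookkeeping step: the hypothesis ``Hilbert character part trivial'' simply gives $\phi(a_i)=1$ directly, so $\Lambda^{(i)}(\phi)=\sum_u\phi(\varpi_{\fp^m}u)\xi^{(i)}_u$ matches $\sum_a\phi(a)\xi^{(i)}_m(a)$ under the bijection $u\mapsto\varpi_{\fp^m}u$ without any further absorption via $t_i$.
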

\begin{proof}
From our assumption, the value $L_\fp(0,f,\phi)$ is given by
$$
\a_{f,\fp}^{-m}\mathcal{L}_f(\phi)-\a_{f,\fp}^{-m-1}\chi(\varpi_\fp)\sum_{i=1}^{h_F}\sum_{a\in (\O_F/\fp^{m})^\times}\phi(a)\xi_{m-1}^{(i)}(a)\cap\eta_{f,\operatorname{c},i}.
$$
Then the latter term of the above equation can be written as follows:
$$
-\a_{f,\fp}^{-m-1}\chi(\varpi_\fp)\sum_{i=1}^{h_F}\sum_{b\in (\O_F/\fp^{m-1})^\times}\sum_{d\text{ mod }\fp}\phi(b)\phi(1+b^{-1}d\varpi_\fp^{m-1})\xi_{m-1,\fN}^{(i)}(b)\cap\eta_{f,\operatorname{c},i}.
$$
By the orthogonality of characters, the above equation is equal to zero. Therefore, we obtain the desired result.
\end{proof}
Let $\g$ be a topological generator of $1+\fp\O_{F,\fp}$, then we have the isomorphism $(y\mapsto\g^y):\O_{F,\fp}\cong 1+\fp\O_{F,\fp}$. By the decomposition $\O_{F,\fp}^\times=W_\fp\times(1+\fp\O_{F,\fp})$, the $p$-adic Iwasawa power series $L_\fp(T;f,\phi)$ of $L_\fp(s,f,\phi)$ is defined by
$$
L_\fp(T;f,\phi):=\int_{\O_{F,\fp}}T^y\phi(\g^y)\sum_{\k\in W_\fp}\phi(\k)d\nu_f(\k\g^y)\in\O[[T-1]].
$$
Then we have $L_\fp(\gamma^{-s};f,\phi)=L_\fp(s,f,\phi)$. By the Weierstrass preparation theorem, there exists a product $P(T)\in\O[[T-1]]$ of distinguished polynomials, a unit element $U(T)\in\O[[T-1]]^\times$ and an integer $\mu$ such that $L_\fp(T;f,\phi)=\pi^{\mu}P(T)U(T)$. Let us define the $\mu$-\emph{invariant} $\mu\big(L_\fp(s,f,\phi)\big)$ of $L_\fp(s,f,\phi)$ by $\mu$.

\subsection{Results toward the vanishing of $\mu$-invariant} 
By using the results in the previous subsection, we can suggest the Bianchi modular version of Greenberg conjecture :
\begin{conj}\label{greenberg:conj:bianchi:version} Assume $(\mathbf{Non\text{-}Eis})$, then $\mu\big(L_\fp(s,f,\phi)\big)=0$.
\end{conj}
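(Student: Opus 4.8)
The plan is to reduce the vanishing of the $\mu$-invariant to a \emph{residual non-vanishing} of the integral $L$-values $\mathcal{L}_f(\phi')$ as $\phi'$ runs over finite order Hecke characters of $\fp$-power conductor, and then to try to establish that non-vanishing by running the argument of Theorem \ref{vertical:direction} with $\fp$ held fixed and the exponent $n$ tending to infinity. For the reduction, recall that by the Weierstrass preparation theorem $\mu\big(L_\fp(s,f,\phi)\big)=0$ is equivalent to $L_\fp(T;f,\phi)\not\equiv 0\pmod{\varpi}$, i.e.\ to the non-divisibility by $\varpi$ of the $\phi|_{W_\fp}$-isotypic component of the measure $\nu_f$ restricted to $\O_{F,\fp}^\times$. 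Since $\a_{f,\fp}\in\O^\times$ by $\fp$-ordinarity, Proposition \ref{integral:lvalue:interpolation} shows that a single value $L_\fp(0,f,\phi')\in\O^\times$, with $\phi'$ of $\fp$-power conductor $\fp^m$ ($m\geq 1$), trivial Hilbert part, and $\phi'|_{W_\fp}=\phi|_{W_\fp}$, already forces $L_\fp(T;f,\phi)\not\equiv 0\pmod\varpi$; and $L_\fp(0,f,\phi')=\a_{f,\fp}^{-m}\mathcal{L}_f(\phi')$. Thus it suffices to prove that under $(\mathbf{Non\text{-}Eis})$, for each character $\psi$ of $W_\fp$ there is a finite order Hecke character $\phi'$ of conductor $\fp^m$, $m\geq 1$, with trivial Hilbert part and $\phi'|_{W_\fp}=\psi$, such that $\mathcal{L}_f(\phi')\in\O^\times$.

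Writing $\mathcal{L}_f(\phi')=\sum_i\Lambda^{(i)}(\phi')\cap\eta_{f,\operatorname{c},i}$ and expanding $\Lambda^{(i)}(\phi')$ in the special modular symbols $\xi^{(i)}_{m,\fp}$, this residual non-vanishing is equivalent to saying that the reduction modulo $\varpi$ of the submodule spanned by the $\fp$-power modular symbols pairs non-trivially with $\eta_{f,\operatorname{c}}\bmod\varpi$; here $(\mathbf{Non\text{-}Eis})$ enters through Lemma \ref{periods:compare} (making $\Omega_f/\Omega_{f,\operatorname{c}}$ a $p$-adic unit) and through the irreducibility of the residual Galois representation $\bar\rho_f$ attached to $f$ (cf.\ Scholze), which guarantees that the $f$-part of $H^1_{\operatorname{c}}$ reduces injectively mod $\varpi$ so that $\eta_{f,\operatorname{c}}\bmod\varpi$ is a non-zero functional on $H_1\big(X_1(\fN),\ovl{\F}_p\big)$ under the Poincar\'e duality pairing (perfect mod $p$ since $p$ is coprime to the relevant torsion). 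The two steps I would carry out are: \textbf{(1)} the characteristic-zero full-rankness $M_{n,\fp}\otimes_\Z\C=H_1\big(X_1(\fN),\C\big)$ for the fixed prime $\fp$ and all sufficiently large $n$; and \textbf{(2)} the upgrade of (1) to an identity modulo $\varpi$ for the Hecke-twisted module $M^f_{n,\fp}$. For (1) one runs the proof of Theorem \ref{vertical:direction} verbatim, but with $\ell:=N(\fp)$ fixed and $n\to\infty$: in Proposition \ref{pairing:esti} the error terms $O(\ell^{-n/6})$, $O(\ell^{v}\ell^{-(3/16-\e)n})$ and $O(\ell^{v+v_\fp(\beta)+1}\ell^{-(3/32-\e)n})$ all tend to $0$ as $n\to\infty$ for a fixed $v$ (take $v=1$) and fixed $\beta$, since the implicit constants do not depend on $n$ or $\ell$; hence the estimate (\ref{special:cycle:modular:form:integral:estimation}) persists, and feeding it together with Lemma \ref{fund:lin:alg} into the argument of Theorem \ref{vertical:direction} applied to the sequence $\{M_{n,\fp}\otimes\C\}_n$ and a form $f\neq 0$ with $a_f(\beta\fa_i)\neq 0$ yields the contradiction $a_f(\beta\fa_i)/N(\beta\fa_i)=0$. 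For (2) one copies the proof of Proposition \ref{full:rank:horizontal:variation}: the boundary relation $\sum_b N_b=0$ among the cusps together with $\fp$-ordinarity (so $a_f(\fp)\in\O^\times$) gives $M^{(i)}_{n,\fp}(a+\fp^{n-1}\O_{F,\fp})\otimes\ovl{\F}_p\subseteq M^{f,(i)}_{n,\fp}\otimes\F_p$, and summing over $a$ and $i$ one would obtain $M^f_{n,\fp}\otimes_\Z\ovl{\F}_p=H_1\big(X_1(\fN),\ovl{\F}_p\big)$, whence the sought non-vanishing --- \emph{provided} $p$ does not divide the index $[H_1(X_1(\fN),\Z):M_{n,\fp}]$ for the chosen $n$.

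That last proviso is exactly the hard part, and it is the reason the statement is only a conjecture. Step (1) merely gives that this index is finite; it carries no information about its $p$-adic valuation, since the estimate in Proposition \ref{pairing:esti} is archimedean and blind to $p$-divisibility, so no amount of archimedean work can control $v_p\big(\langle\Upsilon^{(i)}_{n,\fp}(U,\beta),f\rangle\big)$. In Theorem \ref{main:result} one sidesteps the difficulty by letting $\fp$ vary in the arithmetic progression $\mathfrak{X}_n$ of Remark \ref{horizontal:module:indices}, so that coprimality of $\fp$ to the single integer $\mathfrak{h}_n$ can be arranged, at the price of the auxiliary twists $\otimes\psi$ and $\omega^j$. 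To reach the conjecture for every ordinary $\fp$ one would instead need a genuinely new, $p$-adic ingredient: either a $p$-integral refinement of the integral estimate that controls the valuation of the pairing, or a residual/Galois-cohomological argument upgrading Namikawa's non-vanishing (Theorem \ref{homology:generate:padic:unit}) from characters of prime conductor to characters of $\fp$-power conductor --- for instance an Ihara-type or level-raising statement at $\fp$ exhibiting the residual $\fp$-power modular symbols as generators of $H_1\big(X_1(\fN),\ovl{\F}_p\big)[\bar\rho_f]$ under $(\mathbf{Non\text{-}Eis})$. Establishing such an upgrade is, at present, the decisive obstacle.
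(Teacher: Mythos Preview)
The statement you are addressing is labelled a \emph{Conjecture} in the paper, and the paper does not supply a proof of it; it only establishes the partial result Theorem \ref{mu:invariant:vanishing} (and its density upgrade Theorem \ref{main:result}), where $\fp$ is constrained to lie in the arithmetic progression $\mathfrak{X}_n$ so that $p\nmid\mathfrak{h}_n$, and where auxiliary twists by $\psi\in H_F$ and $\omega^j$ are allowed. Your write-up is therefore not really a proof proposal but an honest diagnosis of why the statement remains open, and on that score your analysis is essentially correct and matches the paper's own framing.

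Two remarks. First, your Step (1) --- running the estimate of Proposition \ref{pairing:esti} with $\fp$ fixed and $n\to\infty$ --- is indeed valid (the error terms decay in $n$ for fixed $\ell$), but note that the paper never states or uses this ``vertical in $n$'' variant; it only proves full-rankness for $n\geq 22$ fixed and $N(\fl)\to\infty$. So even the characteristic-zero input you want is an extra (easy) observation, not something already on the shelf. Second, and more importantly, you correctly isolate the genuine obstruction: Step (1) gives only finiteness of $[H_1(X_1(\fN),\Z):M_{n,\fp}]$, with no control on its $p$-part, and the argument of Proposition \ref{full:rank:horizontal:variation} needs precisely $p\nmid[H_1(X_1(\fN),\Z):M_{n,\fp}]$ to pass to $\ovl{\F}_p$-coefficients. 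The paper's workaround --- fixing $n$, bounding the index once by $\mathfrak{h}_n$, and then moving $\fp$ inside $\mathfrak{X}_n$ away from $\mathfrak{h}_n$ --- is exactly what forces the twists and the ``positive proportion'' in Theorem \ref{main:result}, and your description of this trade-off is accurate. In short: there is no proof in the paper to compare against, and the gap you name (controlling the $p$-divisibility of the index for a \emph{given} $\fp$) is the actual content of the conjecture.
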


In the main theorem, we discuss the $\mu$-invariant of $L_\fp(s,f\otimes\psi,\phi)$ for $\psi\in H_F$ instead of $\mu\big(L_\fp(s,f,\phi)\big)$ due to the technical issues. To achieve this, we need to compare the periods of $f$ and $f\otimes\psi$:

\begin{prop}\label{compare:periods} Let $\a_{f,\psi,\operatorname{c}}\in\C_p$ be the ratio of the periods:
$$
\a_{f,\psi,\operatorname{c}}:=\frac{\Omega_{f\otimes\psi,\operatorname{c}}}{\Omega_{f,\operatorname{c}}}
$$
Then $\a_{f,\psi,\operatorname{c}}\in\O^\times$ for any $\psi\in H_F$.
\end{prop}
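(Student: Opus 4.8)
The plan is to realize the twist $f\mapsto f\otimes\psi$ by a Hilbert character $\psi\in H_F$ as an explicit $\mathcal O$-linear automorphism of the integral cuspidal cohomology, and to read off from it that the induced rescaling of the canonical periods is a unit. The key point is that twisting by an \emph{unramified} Hecke character is just multiplication by a locally constant function: since $\psi$ is trivial on $F^\times$ and on $\C^\times\widehat{\O}_F^\times$, the character $\psi\circ\det\colon\GL_2(\A_F)\to\C^\times$ is trivial on $\GL_2(F)$, on $\C^\times\operatorname{U}_2(\C)$ and on $U_1(\fN)$, so it descends to a locally constant function on $Y_1(\fN)$ which on the component $Y_1^i(\fN)$ (the image of $\GL_2(F)t_i\GL_2(\C)U_1(\fN)$) equals the constant $\psi(a_i)\in\mu_{h_F}$. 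First I would check, directly from Definition~\ref{adel:cuspform:defn} and the Fourier--Whittaker expansion of Proposition~\ref{four:whit:exp:hida}, that $f\otimes\psi=(\psi\circ\det)\cdot f$: the right-hand side has nebentypus $\chi\psi^2$, level $U_1(\fN)$, and Fourier coefficients $a_f(\fa)\psi(\fa)$, which characterizes $f\otimes\psi$. Since $\psi\circ\det$ is moreover trivial on $\GL_2(\C)$, this gives $(f\otimes\psi)_{(i)}=\psi(a_i)\,f_{(i)}$ as functions on $\GL_2(\C)$, hence $\delta^{(i)}\big((f\otimes\psi)_{(i)}\big)=\psi(a_i)\,\delta^{(i)}(f_{(i)})$ by $\C$-linearity of $\delta^{(i)}$.

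Next I would introduce the endomorphism $M_\psi$ of $H^1_*(Y_1(\fN),R)\cong\bigoplus_{i=1}^{h_F}H^1_*(Y_1^i(\fN),R)$, for $*=\operatorname{c}$ or $\text{par}$ and $R=\mathcal O$ or $\C$, acting by the scalar $\psi(a_i)$ on the $i$-th summand, i.e.\ multiplication by $\psi\circ\det$. Because each $\psi(a_i)$ lies in $\mu_{h_F}\subseteq\mathcal O^\times$ (recall $\mathcal O$ contains $\mu_{h_F}$), $M_\psi$ is an $\mathcal O$-linear automorphism; it commutes with the Hecke-equivariant section $\iota$ (which respects the component decomposition), and since $T_\fp$ is built from right translations by matrices of determinant $\varpi_\fp$, one has the twisted commutation $T_\fp M_\psi=\psi(\fp)\,M_\psi\,T_\fp$, exactly as in the computation in the proof of Lemma~\ref{cohomology:hecke:action}. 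It follows that $M_\psi$ preserves $H^1_{\text{cusp}}(Y_1(\fN),\mathcal O)=\operatorname{Im}(\iota)\cap H^1_{\operatorname{c}}(Y_1(\fN),\mathcal O)$ and restricts to an $\mathcal O$-linear isomorphism $H^1_{\text{cusp}}(Y_1(\fN),\mathcal O)[f]\xrightarrow{\sim}H^1_{\text{cusp}}(Y_1(\fN),\mathcal O)[f\otimes\psi]$; both are free $\mathcal O$-modules of rank one (for $f\otimes\psi$ by the same multiplicity-one input as in Section~\ref{conj:eisenstein:special}, since it is again a normalized eigenform of weight $(2,2)$ and level $U_1(\fN)$ with Hecke eigenvalues in $\mathcal O$). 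Hence $M_\psi(\eta_{f,\operatorname{c}})=v\cdot\eta_{f\otimes\psi,\operatorname{c}}$ for a unique $v\in\mathcal O^\times$.

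Finally I would assemble everything. From $\delta^{(i)}\big((f\otimes\psi)_{(i)}\big)=\psi(a_i)\,\delta^{(i)}(f_{(i)})$ we get $\delta(f\otimes\psi)=\sum_{i}\delta^{(i)}\big((f\otimes\psi)_{(i)}\big)=M_\psi\big(\delta(f)\big)$; applying $\iota$, using $\iota M_\psi=M_\psi\iota$ and $\iota(\delta(f))=\Omega_{f,\operatorname{c}}\,\eta_{f,\operatorname{c}}$, we obtain $\iota(\delta(f\otimes\psi))=\Omega_{f,\operatorname{c}}\,M_\psi(\eta_{f,\operatorname{c}})=v\,\Omega_{f,\operatorname{c}}\,\eta_{f\otimes\psi,\operatorname{c}}$, and comparing with the defining relation $\iota(\delta(f\otimes\psi))=\Omega_{f\otimes\psi,\operatorname{c}}\,\eta_{f\otimes\psi,\operatorname{c}}$ gives $\alpha_{f,\psi,\operatorname{c}}=\Omega_{f\otimes\psi,\operatorname{c}}/\Omega_{f,\operatorname{c}}=v\in\mathcal O^\times$, which is the claim. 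The step I expect to need the most care is not the period bookkeeping but verifying that $M_\psi$ genuinely respects the \emph{integral} cuspidal structure fixed in Section~\ref{conj:eisenstein:special}---equivalently, that the identification $f\otimes\psi=(\psi\circ\det)f$ is compatible with the chosen $\mathcal O$-lattices in $H^1_{\operatorname{c}}$ and with the section $\iota$---so that $v$ lands in $\mathcal O^\times$ rather than merely in $\C_p^\times$.
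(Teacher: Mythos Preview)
Your proposal is correct and follows essentially the same approach as the paper: your class $M_\psi(\eta_{f,\operatorname{c}})=\sum_i\psi(a_i)\eta_{f,\operatorname{c},i}$ is exactly the paper's auxiliary class $\eta_{f,\psi,\operatorname{c}}$, and both arguments identify it as an $\O$-integral element of $H^1_{\mathrm{cusp}}(Y_1(\fN),\O)[f\otimes\psi]$ via the Hecke action (the paper invokes Lemma~\ref{cohomology:hecke:action} and strong multiplicity one, you invoke the twisted commutation $T_\fp M_\psi=\psi(\fp)M_\psi T_\fp$), then compare with $\iota(\delta(f\otimes\psi))/\Omega_{f,\operatorname{c}}$ using $(f\otimes\psi)_{(i)}=\psi(a_i)f_{(i)}$. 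The only cosmetic difference is the final step: you deduce $v\in\O^\times$ immediately from the fact that $M_\psi$ is an $\O$-automorphism (since $\psi(a_i)\in\mu_{h_F}\subset\O^\times$), whereas the paper first gets $\alpha_{f,\psi,\operatorname{c}}\in\O$ and then applies the same containment to $\psi^{-1}$, using $(f\otimes\psi)\otimes\psi^{-1}=f$ to obtain $\alpha_{f,\psi,\operatorname{c}}^{-1}=\alpha_{f\otimes\psi,\psi^{-1},\operatorname{c}}\in\O$.
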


\begin{proof} Let us choose $\psi\in H_F$. By the definition, $\eta_{f\otimes\psi,\operatorname{c}}$ is a $\O$-generator of $H^1_{\text{cusp}}(X_1(\fN),\O)[f\otimes\psi]$. Note that $f\otimes\psi$ is a normalized Hecke eigenform.
Let us define the cohomology classes $\eta_{f,\psi,\operatorname{c}}$ by
$$
\eta_{f,\psi,\operatorname{c}}:=\sum_{i=1}^{h_F}\psi(a_i)\eta_{f,\operatorname{c},i}.
$$
We observe that the class $\eta_{f,\psi,\operatorname{c}}$ is an element in $H_{\text{cusp}}^1(X_1(\fN),\O)$. By Lemma \ref{cohomology:hecke:action}, we obtain $T_\fq\eta_{f,\psi,\operatorname{c}}=a_f(\fq)\psi(\varpi_\fq)\eta_{f,\psi,\operatorname{c}}$ for almost all prime ideals $\fq$. Thus by the strong multiplicity one theorem, we have 
$$
\eta_{f,\psi,\operatorname{c}}\in H_{\text{cusp}}^1(X_1(\fN),\O)[f\otimes\psi],
$$ 
which implies that $\eta_{f,\psi,\operatorname{c}}=\beta_{f,\psi,\operatorname{c}}\eta_{f\otimes\psi,\operatorname{c}}$ for some $\beta_{f,\psi,\operatorname{c}}\in\O$. 

On the other hand, by the definition of $f\otimes\psi$, we have
$$
\a_{f,\psi,\operatorname{c}}\eta_{f\otimes\psi,\operatorname{c}}=\frac{\iota\big(\delta(f\otimes\psi)\big)}{\Omega_{f,\operatorname{c}}}=\sum_{i=1}^{h_F}\psi(a_i)\frac{\iota\big(\delta^{(i)}(f_{(i)})\big)}{\Omega_{f,\operatorname{c}}}=\eta_{f,\psi,\operatorname{c}},
$$
which implies that $\a_{f,\psi,\operatorname{c}}=\beta_{f,\psi,\operatorname{c}}\in\O.$
Since $(f\otimes\psi)\otimes\psi^{-1}=f$, we have 
$\a_{f,\psi,\operatorname{c}}^{-1}=\a_{f\otimes\psi,\psi^{-1},\operatorname{c}}\in\O$. 
Therefore, we conclude that $\a_{f,\psi,\operatorname{c}}$ is a $p$-adic unit.
\end{proof}

Let $n\geq\max\{22,m\}$ be an integer, where $m$ is the integer such that $\fp^m$ is the conductor of $\phi$. Let $\mathfrak{X}_n$ be the arithmetic progression defined in Theorem \ref{full:rank:horizontal}. 
Let $\omega:W_\fp\rightarrow\O_{F,\fp}^\times$ be the Teichm{\"u}ller character. Note that we can consider $\omega$ to be a Hecke character of conductor $\fp$.
Using the results on the full-rankness and the periods, we obtain the following result toward Conjecture \ref{greenberg:conj:bianchi:version}:

\begin{thm}\label{mu:invariant:vanishing}  
Assume $(\mathbf{Non}$-$\mathbf{Eis})$. If $\fp\in\mathfrak{X}_n$, then we have $\mu\big(L_\fp(s,f\otimes\psi,\phi\omega^j)\big)=0$ for some $\psi\in H_F$ and $0\leq j<p-1$.
\end{thm}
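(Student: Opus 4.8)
I will argue by contradiction. Suppose that $\mu\big(L_\fp(s,f\otimes\psi,\phi\omega^j)\big)\geq 1$ for \emph{every} $\psi\in H_F$ and every $0\leq j<p-1$; I will deduce that the class $\eta_{f,\operatorname{c}}$ vanishes modulo $\varpi$, which is impossible because $H^1_{\text{cusp}}(Y_1(\fN),\O)[f]$ is free of rank one over $\O$ with $\O$-generator $\eta_{f,\operatorname{c}}$. The first move is to translate the hypothesis into congruences for the measures. By the standard identification of the $\mu$-invariant of a Mazur--Mellin $p$-adic $L$-function with the least $\varpi$-valuation attained by its defining measure, and since $n\geq\max\{22,m\}$ exceeds the $\fp$-adic valuation of the conductor of $\phi\omega^j$, the assumption $\mu\big(L_\fp(s,g,\phi')\big)\geq 1$ forces
$$
\sum_{\k\in W_\fp}\phi'(\k)\,\nu_{g}(\k c_0+\fp^{n}\O_{F,\fp})\equiv 0\pmod\varpi\qquad\text{for all }c_0\in 1+\fp\O_{F,\fp}.
$$
Applying this with $g=f\otimes\psi$ and $\phi'=\phi\omega^{j}$ gives such a congruence for all $\psi\in H_F$, all $0\le j<p-1$, and all $c_0$.

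\textbf{Rewriting in terms of $f$ and averaging.} Expanding $\nu_{f\otimes\psi}$ from its definition, I use (i) $\eta_{f\otimes\psi,\operatorname{c},i}=\a_{f,\psi,\operatorname{c}}^{-1}\psi(a_i)\eta_{f,\operatorname{c},i}$, which follows from Proposition \ref{compare:periods} and the definition of $\eta_{f,\psi,\operatorname{c}}$ (with $\a_{f,\psi,\operatorname{c}}\in\O^\times$), and (ii) the congruence $\a_{f\otimes\psi,\fp}\equiv a_f(\fp)\psi(\varpi_\fp)\pmod\varpi$, valid because the complementary root of $x^{2}-a_{f\otimes\psi}(\fp)x+\chi\psi^{2}(\varpi_\fp)N(\fp)$ is divisible by $N(\fp)$, hence by $p$. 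Multiplying the congruence of the previous paragraph by the units $\a_{f\otimes\psi,\fp}^{\,n+1}\a_{f,\psi,\operatorname{c}}$ turns it into
$$
\sum_{\k\in W_\fp}\phi(\k)\omega^{j}(\k)\sum_{i=1}^{h_F}\psi(a_i\varpi_\fp)\big(a_f(\fp)\,\xi^{(i)}_{n,\fp}(\k c_0)-\psi(\varpi_\fp)\chi(\varpi_\fp)\,\xi^{(i)}_{n-1,\fp}(\k c_0)\big)\cap\eta_{f,\operatorname{c},i}\equiv 0\pmod\varpi .
$$
Summing over $\psi\in H_F$ against $\ovl\psi(a_{i_0}\varpi_\fp)$ and invoking orthogonality of Hilbert characters isolates index $i_0$ in the $\xi_n$-term and the index $i_1$ with $\fa_{i_1}=\fa_{i_0}\fp^{-1}$ in $\operatorname{Cl}(F)$ in the $\xi_{n-1}$-term (the same permutation appearing in Proposition \ref{padic:measure} and Lemma \ref{cohomology:hecke:action}). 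Then summing over $0\leq j<p-1$ against $\ovl{\omega^{j}(\tau)}$ — here $\{\omega^{j}\}_{0\le j<p-1}$ exhausts the character group of $W_\fp\cong\mu_{p-1}$, which is the case in which $L_\fp$ is defined, and $\sum_{j}\omega^{j}(\k\tau^{-1})=(p-1)\delta_{\k,\tau}$ — removes the $W_\fp$-sum. Letting $a=\tau c_0$ run over $(\O_F/\fp^{n})^\times$, we get
$$
a_f(\fp)\,\xi^{(i_0)}_{n,\fp}(a)\cap\eta_{f,\operatorname{c},i_0}-\chi(\varpi_\fp)\,\xi^{(i_1)}_{n-1,\fp}(a)\cap\eta_{f,\operatorname{c},i_1}\equiv 0\pmod\varpi ,
$$
for all units $a$; summing over $i_0$ (hence over $i_1$, a bijection) and using that the pairing is block-diagonal over the components $X_1^{i}(\fN)$ yields $\big(a_f(\fp)\xi_{n,\fp}(a)-\chi(\varpi_\fp)\xi_{n-1,\fp}(a)\big)\cap\eta_{f,\operatorname{c}}\equiv 0\pmod\varpi$ for all units $a$.

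\textbf{Conclusion via full-rankness.} The classes above generate $M^{f}_{n,\fp}$, so $\xi\cap\eta_{f,\operatorname{c}}\equiv 0\pmod\varpi$ for every $\xi\in M^{f}_{n,\fp}$. By Proposition \ref{full:rank:horizontal:variation} — applicable since $\fp\in\mathfrak{X}_n$ is $f$-ordinary and $p$ is coprime to $\mathfrak{h}_n$ — we have $M^{f}_{n,\fp}\otimes_\Z\F_{p}=H_1(X_1(\fN),\ovl{\F}_{p})$, hence $c\cap\eta_{f,\operatorname{c}}\equiv 0\pmod\varpi$ for every $c\in H_1(X_1(\fN),\ovl{\F}_{p})$. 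Non-degeneracy of the Lefschetz--Poincar\'e pairing modulo $p$ (valid under the running coprimality hypotheses on $p$, in particular $p\nmid\mathfrak{h}_n$) then forces $\eta_{f,\operatorname{c}}\equiv 0\pmod\varpi$, contradicting that $\eta_{f,\operatorname{c}}$ is an $\O$-free generator of a rank-one $\O$-module. This proves the theorem.

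\textbf{Main obstacle.} The delicate point is the middle step: one must arrange the period comparison of Proposition \ref{compare:periods} and the replacement of the unit root $\a_{f\otimes\psi,\fp}$ by $a_f(\fp)\psi(\varpi_\fp)$ to interact correctly with the $H_F$- and $\omega^{j}$-orthogonality, so that the class-group index shift produced by the $\psi$-average is exactly the one recorded by the module $M^{f}_{n,\fp}$; the twist by $\psi$ is forced because the ``balanced'' module $M_{n,\fl,0}$ is not of full rank when $h_F>1$. One must also check that the mod-$\varpi$ reduction of $\eta_{f,\operatorname{c}}$ is genuinely non-zero in $H^1_{\operatorname{c}}(Y_1(\fN),\ovl{\F}_{p})$, which is where the coprimality of $p$ with $\mathfrak{h}_n$ and the relevant torsion enters.
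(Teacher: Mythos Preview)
Your overall strategy matches the paper's: contradiction, translate $\mu\geq 1$ into measure congruences, use Proposition~\ref{compare:periods} to replace $\eta_{f\otimes\psi,\operatorname{c},i}$ by $\psi(a_i)\eta_{f,\operatorname{c},i}$, then average over $H_F$ and over $\{\omega^j\}$ to strip the sums, and finally invoke Proposition~\ref{full:rank:horizontal:variation}. Two points deserve correction.

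First, a simplification you missed: since $\fp\in\mathfrak{X}_n$, the ideal $\fp=\alpha\O_F$ is \emph{principal} by construction (Theorem~\ref{full:rank:horizontal}), so $\psi(\varpi_\fp)=1$ for every $\psi\in H_F$. The paper uses this to note $\a_{f\otimes\psi,\fp}=\a_{f,\fp}$ outright. Your ``index shift'' $i_0\mapsto i_1$ is therefore the identity, and the detour through $\psi(a_i\varpi_\fp)$ versus $\psi(a_i\varpi_\fp^2)$ collapses; you already have the $i$-matched generators of $M^{f,(i_0)}_{n,\fp}$ for every $i_0$, with no need to sum over $i_0$.

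Second, and this is the genuine gap: your final non-degeneracy step is applied to the wrong pairing. You have shown $c\cap\eta_{f,\operatorname{c}}\equiv 0\pmod\varpi$ for all $c\in H_1(X_1(\fN),\ovl\F_p)$, but the perfect pairing (\ref{lefschetz:poincare:pairing:2}) is between $H_1(X_\Gamma,C_\Gamma,R)'$ and $H^1_{\operatorname{c}}(Y_\Gamma,R)'$, not between $H_1(X_\Gamma,R)'$ and $H^1_{\operatorname{c}}$. Restricted to $H_1(X_\Gamma,\ovl\F_p)$, the right kernel in $H^1_{\operatorname{c}}(Y_\Gamma,\ovl\F_p)$ is exactly the image of $H^0(\partial Y_\Gamma^*,\ovl\F_p)$ from the sequence (\ref{cohom:mod:symb}); so you can only conclude that $\eta_{f,\operatorname{c}}$ lies in that boundary image modulo $\varpi$, i.e.\ that $\text{par}(\eta_{f,\operatorname{c}})\equiv 0\pmod\varpi$ in $H^1_{\text{par}}$. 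This is \emph{not} an automatic contradiction, and it is precisely where the hypothesis $(\mathbf{Non\text{-}Eis})$ enters, which your argument never actually uses. The paper invokes Lemma~\ref{periods:compare} to write $\text{par}(\eta_{f,\operatorname{c}})=(\Omega_f/\Omega_{f,\operatorname{c}})\eta_f$ with $\Omega_f/\Omega_{f,\operatorname{c}}\in\O^\times$, and then applies the genuinely non-degenerate pairing (\ref{lefschetz:poincare:pairing}) on $H_1(X)\times H^1_{\text{par}}$ to the $p$-optimal class $\eta_{f,1}$, yielding the contradiction. Your claim that ``$\eta_{f,\operatorname{c}}$ is an $\O$-free generator'' does not by itself rule out $\eta_{f,\operatorname{c}}$ reducing to a boundary class; you must pass to $\eta_f$ via $(\mathbf{Non\text{-}Eis})$ exactly as the paper does.
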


\begin{proof} We observe that $f\otimes\psi$ is a $\fp$-ordinary normalized eigenform for any $\psi\in H_F$. Assume the contrary, in other words, $\mu\big(L_\fp(s,f\otimes\psi,\phi\omega^j )\big)>0$ for any $\psi\in H_F$ and integers $0 \leq j<p-1$. Then for each sufficiently small open subset $U$ of $\O_{F,\fp}$, we have 
\begin{equation}\label{padic:integration:equation}
\int_{U}\phi\omega^j(\g^y)\sum_{\k\in W_\fp}\phi\omega^j(\k)d\nu_{f\otimes\psi}(\k\g^y)\equiv 0\ (\text{mod }\varpi).
\end{equation}
Thus for each $a\equiv 1\ (\text{mod }\fp)$, we obtain
\begin{equation}\label{full:generation:modp:equation}
\sum_{\k\in W_\fp}\phi\omega^j(\k)\nu_{f\otimes\psi}(a\k+\fp^{n}\O_{F,\fp})\equiv 0\ (\text{mod }\varpi).
\end{equation}
Note that we have $\a_{f\otimes\psi,\fp}=\a_{f,\fp}$ for each $\psi\in H_F$ as $\psi(\varpi_\fp)=1$. By using the congruence $a_{f\otimes\psi}(\fp)\equiv\a_{f,\fp}\ (\text{mod }\varpi)$, we can write the equation (\ref{full:generation:modp:equation}) by
\begin{equation}\label{full:generation:modp:equation:2}
\sum_{i=1}^{h_F}\sum_{\k\in W_\fp}\phi\omega^j(\kappa)\big(a_f(\fp)\xi^{(i)}_{n}(a\kappa)-\chi(\varpi_\fp)\xi^{(i)}_{n-1}(a\kappa)\big)\cap\eta_{f\otimes\psi,{\operatorname{c}},i}\equiv 0\ (\text{mod }\varpi).
\end{equation}
From Proposition \ref{compare:periods}, we can replace the period $\Omega_{f\otimes\psi,c}$ by $\Omega_{f,c}$, thus we can replace $\eta_{f\otimes\psi,c,i}$ by $\psi(a_i)\eta_{f,c,i}$. By applying the following operator
$$
\frac{1}{h_F(p-1)}\sum_{\psi\in H_F}\sum_{j^\p=0}^{p-2}\omega^{-j^\p}(\nu)
$$ 
for $\nu\in W_\fp$ on the equation (\ref{full:generation:modp:equation:2}), we obtain
\begin{equation}\label{full:generation:modp:equation:3}
\big(a_f(\fp)\xi^{(1)}_{n}(a\nu)-\chi(\varpi_\fp)\xi^{(1)}_{n-1}(a\nu)\big)\cap\eta_{f,\operatorname{c},1}\equiv 0\ (\text{mod }\varpi)
\end{equation}
for each $a\equiv 1\ (\text{mod }\fp)$ and $\nu\in W_\fp$.
Hence for each $c\in H_1(X_1^1(\fN),\ovl{\F}_p)$, we have
$$
c\cap\eta_{f,1}=\sum_{a,\nu}c_{a,\nu}\big(a_f(\fp)\xi^{(1)}_{n}(a\nu)-\chi(\varpi_\fp)\xi^{(1)}_{n-1}(a\nu)\big)\cap\frac{\Omega_{f,\operatorname{c}}}{\Omega_{f}}\eta_{f,\operatorname{c},1}\equiv 0\ (\text{mod }\varpi)
$$
due to the equations (\ref{pairing:equality}), (\ref{full:generation:modp:equation:3}), Proposition \ref{full:rank:horizontal:variation} and Lemma \ref{periods:compare}. Note that the pairing $\cap$ which is defined in the equation (\ref{lefschetz:poincare:pairing}) is compatible for $R=\O$ and $R=\ovl{\F}_p$ under the projection map $(\text{mod }\pi):\O\rightarrow\ovl{\F}_p$ by the assumption on $\fp$. Therefore, it is a contradiction since $f$ is normalized eigenform, $\eta_f$ is chosen to be $p$-optimal, and the pairing $\cap$ is non-degenerate.
\end{proof}

\begin{rem}
Note that if there is no ordinary prime ideals $\fp$ in $\mathfrak{X}_n$, then our main theorem is vacuously true.
Therefore, we should verify that the set of ordinary prime ideals has positive density.
\end{rem}



\section{Density of ordinary primes} 
Let $\ell$ be a rational prime. By using the result of Serre \cite{serre1981quelques} and the prime number theorem, the set of ordinary rational primes has density one for any classical elliptic modular forms with no CM. In this section, we prove the similar result.


Let $f\in S_2\big(U_0(\fN)\big)=S_2(\fN,\mathbf{1})$ be a normalized Hecke eigenform, which is neither CM nor the lifting of classical elliptic modular newform. For simplicity, we call such $f$ \emph{Genuine newform}. Let us denote by $\Q_f$ the coefficient field of $f$  By Friedberg-Hoffstein \cite[Theorem A, Theorem B]{friedberg1995nonvanishing} and Taylor \cite[Theorem 2, Corollary 4]{taylor1994ladic}, we obtain a continuous irreducible representation 
$$
\rho_{f,\ell}:G_F\rightarrow\GL_2(\Q_\ell)
$$
such that if $v$ is an unramified place of $F$ which does not divide $\fN\ell$, then 
\begin{enumerate}
\item $\rho_{f,\ell}$ is unramified at $v$,
\item Im$(\rho_{f,\ell}\oplus\rho^{\co}_{f,\ell})$ is dense in $\{(a,b)\in\GL_2^2:\det(a)=\det(b)\}$, and
\item either $$\text{char}\big(\rho_{f,\ell}(\text{Frob}_v)\big)(X)=X^2-a_f(v)X+N(v)$$ or $a_f(v^{\co})=0$ and 
$$\text{char}\big(\rho_{f,\ell}(\text{Frob}_v)\big)(X)=X^2+N(v),$$ 
where the first possibility occurs outside a set $\{v\}_v$ of density zero.
\end{enumerate}
From this, we have the following proposition:
\begin{prop}\label{density:nonzero:trace:places}
For $x>0$, the number of places $v$ of $F$ such that $\operatorname{Tr}\big(\rho_{f,\ell}(\operatorname{Frob}_v)\big)=0$ and $N(v)<x$ is given by 
$$
O\bigg(\frac{x}{\operatorname{log}(x)^{\frac{3}{2}-\e}}\bigg)
$$ 
for any $\e>0$.
\end{prop}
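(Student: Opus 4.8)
The plan is to reduce the estimate to a count of places at which the (rational) Hecke eigenvalue of $f$ vanishes, and then to run Serre's density argument \cite{serre1981quelques}, with $F$ playing the role of $\Q$. Concretely, I would first invoke property (3) of $\rho_{f,\ell}$: outside a set $S$ of places of density zero (and away from the finitely many places dividing $\fN\ell$ and the ramified ones), the characteristic polynomial of $\rho_{f,\ell}(\operatorname{Frob}_v)$ is $X^2-a_f(v)X+N(v)$, so $\operatorname{Tr}(\rho_{f,\ell}(\operatorname{Frob}_v))=a_f(v)$; and for $v\in S$ one has $\operatorname{Tr}(\rho_{f,\ell}(\operatorname{Frob}_v))=0$ but also $a_f(v^{\co})=0$. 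Setting $A(x):=\#\{v:N(v)<x,\ a_f(v)=0\}$ and using the norm-preserving involution $v\mapsto v^{\co}$ on places, the quantity to be bounded is $\leq 3A(x)+O(1)$, so it suffices to prove $A(x)=O\big(x/\log(x)^{3/2-\varepsilon}\big)$ for every $\varepsilon>0$.

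Next I would set up the image-theoretic input. Since $f$ is genuine with rational Hecke eigenvalues, property (2) says that for the fixed $\ell$ the image of $\rho_{f,\ell}\oplus\rho_{f,\ell}^{\co}$ is as large as possible; together with the standard arguments of Serre this gives a residual largeness statement — either $\overline{\rho}_{f,r}(G_F)$ contains $\operatorname{SL}_2(\F_r)$ for all but finitely many rational primes $r$, or, staying with a single $\ell$, the image of $\rho_{f,\ell}$ modulo $\ell^n$ contains a fixed congruence subgroup for all $n$. In either formulation the trace map onto $\Z/r$ (resp. $\Z/\ell^n$) is surjective with essentially equidistributed fibres, and Goursat's lemma together with simplicity of $\operatorname{PSL}_2(\F_r)$ shows the images modulo $\prod_{r\le y}r$ form the full product; hence the conjugacy-stable subset of the relevant Galois group on which every trace vanishes has relative density $\ll\prod_{r\le y}r^{-1}$ (resp. $\ll\ell^{-n}$), and every $v$ counted by $A(x)$ has $\operatorname{Frob}_v$ in this subset.

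I would then apply the effective Chebotarev density theorem — in its unconditional form (Lagarias--Odlyzko), sharpened by controlling the zeros of the Artin $L$-functions of the $\overline{\rho}_{f,r}$ near $\operatorname{Re}(s)=1$ via a log-free zero-density estimate and the Deuring--Heilbronn phenomenon — to bound $A(x)$ by the relative density above times $\operatorname{Li}(x)$, plus an error governed by the degree and discriminant of the field cut out by $\prod_{r\le y}\overline{\rho}_{f,r}$. Balancing the gain $\prod_{r\le y}r^{-1}$ against this error by a suitable slowly growing choice $y=y(x)$ is exactly the optimization carried out by Serre, and it yields the exponent $3/2-\varepsilon$. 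The main obstacle is precisely this unconditional balancing step: the fields involved have degree of order $\prod_{r\le y}\#\operatorname{GL}_2(\F_r)$ with ramification that must be kept under control, so the naive Chebotarev error is too weak, and it is the log-free zero-density input of \cite{serre1981quelques} that makes $3/2-\varepsilon$ (rather than a smaller exponent) attainable; I would import that input and the optimization directly from Serre. A subsidiary point is to check that in the Bianchi setting the residual largeness of $\overline{\rho}_{f,r}$ for almost all $r$ genuinely follows from property (2), genuineness of $f$, and the compatible-system formalism of Taylor \cite{taylor1994ladic}; if that turns out to be delicate, the single-$\ell$ variant described above, which uses only what is already stated, produces the same bound.
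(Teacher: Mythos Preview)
Your proposal is essentially correct but takes a noticeably longer route than the paper. The paper does not perform the reduction to $A(x)=\#\{v:a_f(v)=0,\ N(v)<x\}$ at all; instead it works directly with the closed subset $C=\{\operatorname{Tr}=0\}$ inside the $\ell$-adic Lie group $G=\operatorname{Im}(\rho_{f,\ell})/(\operatorname{Im}(\rho_{f,\ell})\cap\Q_\ell^\times)$, observes from property~(2) that $\dim G=3$ and $\dim C\le 2$, checks via Serre's \cite[Th\'eor\`eme~15]{serre1981quelques} that the adjoint-fixed-space dimension along $C$ equals $2$, and then invokes Serre's black-box \cite[Th\'eor\`eme~12]{serre1981quelques} for the pair $(G,C)$ to obtain the exponent $3/2-\varepsilon$ in one stroke.

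Your approach recovers the same bound but unpacks Serre's machinery by hand: the reduction to $A(x)$ via property~(3) and the involution $v\mapsto v^{\co}$ is fine (and your $3A(x)$ is harmless overcounting), and your ``single-$\ell$'' fallback---filtering by the mod~$\ell^n$ image and balancing effective Chebotarev against the $\ell^{-n}$ density of the trace-zero fibre---is precisely the engine behind Th\'eor\`eme~12, so it works. Your primary plan through residual images $\overline{\rho}_{f,r}$ for varying $r$, however, relies on large residual image for almost all $r$ in the Bianchi setting; this is not immediate from Taylor's construction and property~(2), so you are right to flag it as a potential obstacle. In short: the paper's argument is a two-line citation of the finished theorem in Serre, whereas you reprove that theorem in situ; both reach the goal, but the direct $\ell$-adic route avoids the extra reduction step and the delicate compatible-system input entirely.
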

\begin{proof}
Let $G:=\text{Im}(\rho_{f,\ell})/\big(\Q_\ell^\times\cap\text{Im}(\rho_{f,\ell})\big)$, $C$ the image of the set $\{s\in\text{Im}(\rho_{f,\ell}):\text{Tr}(s)=0\}$ in $G$. Note that Im$(\rho_{f,\ell})$ is dense in $\GL_2$, thus $\dim_{\Q_\ell}(G)=3$ and $\dim_{\Q_\ell}(C)\leq 2$. By the argument of Serre \cite[Th\'{e}or\`{e}me 15]{serre1981quelques}, we have 
$$
\inf_{s\in C}\dim_{\Q_\ell}(\text{Ad}(s)\rm{Id})=2,
$$ 
where $\text{Ad}$ is the adjoint representation of the Lie algebra $\mathfrak{g}$ of $G$ induced from $\rho_{f,\ell}$. Applying \cite[Th\'{e}or\`{e}me 12]{serre1981quelques} for a pair $(G,C)$, we obtain the desired estimation. 
\end{proof}
From this, we obtain the result toward Conjecture \ref{greenberg:conj:bianchi:version}:
\begin{thm}\label{density:result}
 If $\Q_f=\Q$, then $f$ is $\fl$-ordinary for a set $\{\fl\}_\fl$ of prime ideals of $F$ of density one.
\end{thm}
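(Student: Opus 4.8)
The plan is to prove the complementary statement that the set of prime ideals $\fl$ of $F$ at which $f$ is \emph{not} ordinary has density zero, where density is measured by counting prime ideals of norm at most $x$. Recall that $f$ is $\fl$-ordinary exactly when the Hecke polynomial $X^2-a_f(\fl)X+N(\fl)$ at $\fl$ has a root that is an $\ell$-adic unit, where $\ell$ is the rational prime below $\fl$; since $\Q_f=\Q$ we have $a_f(\fl)\in\Z$, and as $\ell\mid N(\fl)$ this holds if and only if $\ell\nmid a_f(\fl)$.

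First I would dispose of two easy families of primes. The ramified primes of $F$ are finite in number, hence contribute nothing to the density; and the inert primes of $F$ have density zero, since there are only $\sim\tfrac{\sqrt{x}}{\log x}=o\!\big(\tfrac{x}{\log x}\big)$ of them with norm $\leq x$ while there are $\sim\tfrac{x}{\log x}$ prime ideals in all. So it suffices to bound the non-ordinary \emph{split} primes $\fl$, for which $N(\fl)=\ell$. Next I would invoke the subconvexity bound already used in Proposition \ref{pairing:esti}: by Nakasuji \cite[Corollary 1.1]{Nakasuji} one has $|a_f(\fa)|\ll_\e N(\fa)^{\frac12+\theta+\e}$ with $\theta=\tfrac{7}{64}$, so for a split prime $\fl$ we get $|a_f(\fl)|\ll_\e\ell^{\frac12+\theta+\e}$. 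Since $\tfrac12+\theta<1$, fixing $\e$ small enough produces a constant $\ell_0$, depending only on $f$, with $|a_f(\fl)|<\ell$ whenever $\fl$ is split and $\ell\geq\ell_0$. For such $\fl$, either $a_f(\fl)=0$ or $0<|a_f(\fl)|<\ell$ and then $\ell\nmid a_f(\fl)$, i.e. $f$ is $\fl$-ordinary. Hence the non-ordinary prime ideals of $F$ are contained in the union of the finitely many ramified primes, the finitely many split primes with $\ell<\ell_0$, the density-zero set of inert primes, and the set $\{\fl:\ a_f(\fl)=0\}$.

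It then remains to show that $\{\fl:\ a_f(\fl)=0\}$ has density zero, and this is where the Galois representation does the work. Fix one rational prime $\ell$ (any will do: $f$ is a genuine newform, so $\rho_{f,\ell}$ exists with dense image). By property (3) of $\rho_{f,\ell}$, for all but a set of density zero of places $v$ of $F$ one has $\operatorname{Tr}\big(\rho_{f,\ell}(\operatorname{Frob}_v)\big)=a_f(v)$; therefore, for all but finitely many $\fl$, the vanishing $a_f(\fl)=0$ forces either $\fl$ to lie in that density-zero set or $\operatorname{Tr}\big(\rho_{f,\ell}(\operatorname{Frob}_\fl)\big)=0$. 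By Proposition \ref{density:nonzero:trace:places} the primes of the latter kind with norm $<x$ number $O\!\big(x/\log(x)^{3/2-\e}\big)=o\!\big(x/\log x\big)$, and so $\{\fl:\ a_f(\fl)=0\}$ has density zero. Assembling the four pieces, the non-ordinary primes have density zero, which is the assertion.

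The genuinely substantive input is Proposition \ref{density:nonzero:trace:places}, a Chebotarev/Serre-type estimate for the image of $\rho_{f,\ell}$; granting that, the only new observation needed is that subconvexity makes split primes automatically ordinary off the zero locus of $a_f$, and the remainder is elementary bookkeeping about how the prime ideals of an imaginary quadratic field split. I therefore expect no real obstacle here beyond carefully recording these reductions; the one point to watch is that the inert-prime and small-$\ell$ exceptional sets are correctly seen to be of density zero (resp. finite), so that they can be absorbed without affecting the final density-one conclusion.
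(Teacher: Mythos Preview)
Your proof is correct and follows essentially the same strategy as the paper: use the Nakasuji bound to show that non-ordinary primes of large norm must have $a_f(\fl)=0$, then invoke Proposition~\ref{density:nonzero:trace:places} together with property~(3) of $\rho_{f,\ell}$ to conclude that such primes have density zero. The only difference is that you explicitly discard the inert primes as a density-zero set before running the argument on split primes, whereas the paper treats all primes uniformly via the claim $a_f(\fl)\in N(\fl)\Z$; your decomposition is arguably cleaner at that step but not materially different.
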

\begin{proof}
Let $\fl$ be a non-ordinary prime ideal of $F$ lying above a rational prime $\ell$ such that $N(\fl)>2^\frac{64}{25}$. Then we obtain $a_f(\fl)\in N(\fl)\Z$. Assume that $a_f(\fl)\neq 0$, then we have 
$$
N(\fl)\leq |a_f(\fl)|\leq 2N(\fl)^{\frac{39}{64}}
$$
by Nakasuji \cite[Corollary 1.1]{Nakasuji}, which is a contradiction. Therefore, we have $a_f(\fl)=0$ if $\fl$ is non-ordinary and $N(\fl)>2^\frac{64}{25}$. The converse direction is clear. Therefore, if $N(\fl)$ is sufficiently large, then $a_f(\fl)$ is zero if and only if $\fl$ is non-ordinary.

On the other hand, by the prime number theorem and Proposition \ref{density:nonzero:trace:places}, the density of prime ideals $\fl$ of $F$ such that $\text{Tr}\big(\rho_{f,\ell}(\text{Frob}_\fl)\big)=0$ is given by
$$
\lim_{x\rightarrow\infty} \operatorname{log}(x)^{\e-\frac{1}{2}}=0.
$$
Note that the equation $\text{Tr}\big(\rho_{f,\ell}(\text{Frob}_\fl)\big)=a_f(\fl)$ holds for a set of prime ideals $\{\fl\}_\fl$ of density one. Hence we obtain the desired result.
\end{proof}

From Theorem \ref{mu:invariant:vanishing} and Theorem \ref{density:result}, we obtain the main result of the present paper:

\begin{thm}\label{main:result} Let $f\in S_2\big(U_0(\fN)\big)$ be a Genuine newform such that $\Q_f=\Q$. Let $\phi$ be a finite order Hecke character of $\fp$-power conductor. Assume $(\mathbf{Non}$-$\mathbf{Eis})$, then for a positive proportion of ordinary primes $\fp$, we have $\mu\big(L_\fp(s,f\otimes\psi,\phi\omega^j)\big)=0$ for some $\psi\in H_F$ and $0\leq j<p-1$. 
\end{thm}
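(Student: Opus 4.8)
The plan is to obtain Theorem \ref{main:result} by combining the conditional $\mu$-vanishing of Theorem \ref{mu:invariant:vanishing} with the density statement of Theorem \ref{density:result}; the only work left is a counting argument showing that the two relevant families of prime ideals meet in a set of positive density.

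First I would fix $n\geq\max\{22,m\}$ as required by Theorem \ref{mu:invariant:vanishing}, with $m$ the exponent of the $\fp$-power conductor of $\phi$, and recall from Theorem \ref{full:rank:horizontal} the arithmetic progression $\mathfrak{X}_n\subset\{\a\O_F:\a\in\a_n+B\mathfrak{C}\fN\}$, where $B=\prod_{\beta\in(\O_F/\fp_n^n)^\times}\beta$ and $\mathfrak{C}=\prod_{i}\fa_i^{-1}$. The key point is that $\a_n$ is coprime to the modulus $B\mathfrak{C}\fN$ (viewed as an integral ideal): each $\beta$ is a unit modulo $\fp_n^n$, so $\fp_n\nmid B$, while $N(\fp_n)$ was chosen coprime to $\fN\prod_i\fa_i^{-1}$, so $\fp_n$ is coprime to $\mathfrak{C}\fN$. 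Hence, by Chebotarev's density theorem (equivalently, Dirichlet's theorem for the ray class group of modulus $B\mathfrak{C}\fN$), the prime ideals lying in $\mathfrak{X}_n$ form a set of positive natural density. Discarding the finitely many primes dividing $\mathfrak{h}_n$ or of norm $<N(\fp_n)$ — exactly the extra constraints on $\fp$ imposed in Proposition \ref{full:rank:horizontal:variation} and Section \ref{p:adic:l:function:mu:invariants} — leaves a set $\mathfrak{X}_n^\circ$ still of positive density.

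Next I would invoke Theorem \ref{density:result}: since $\Q_f=\Q$, the set of $f$-ordinary prime ideals of $F$ has density one; and for every $\psi\in H_F$ the form $f\otimes\psi$ is $\fp$-ordinary exactly when $f$ is, because $\psi(\varpi_\fp)=1$ forces $a_{f\otimes\psi}(\fp)=a_f(\fp)$ and $\a_{f\otimes\psi,\fp}=\a_{f,\fp}$. Intersecting the density-one ordinary set with $\mathfrak{X}_n^\circ$, I obtain a set $\mathfrak{P}$ of prime ideals $\fp$ of positive density — hence a positive proportion of the ordinary primes — each of which is ordinary, lies in $\mathfrak{X}_n$, and is coprime to $h_FD_F|\O_F^\times|\mathfrak{h}_n\fN\prod_i\fa_i^{-1}$. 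For every $\fp\in\mathfrak{P}$ the hypotheses of Theorem \ref{mu:invariant:vanishing} are met, and it delivers $\psi\in H_F$ and $0\leq j<p-1$ with $\mu\big(L_\fp(s,f\otimes\psi,\phi\omega^j)\big)=0$, which is precisely the assertion.

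The substantive content of the theorem sits entirely in the two inputs — the full-rank mod-$\varpi$ mechanism behind Theorem \ref{mu:invariant:vanishing} and the large-image property of $\rho_{f,\ell}$ behind Theorem \ref{density:result} — so in this final reduction there is no real obstacle; the only care required is to confirm that the several coprimality and lower-norm conditions on $\fp$ imposed in the different sections are simultaneously satisfiable and remove only a density-zero exceptional set, so that $\mathfrak{P}$ is genuinely of positive density.
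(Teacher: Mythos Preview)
Your proposal is correct and follows exactly the approach the paper takes: the paper's own proof is the single sentence ``From Theorem \ref{mu:invariant:vanishing} and Theorem \ref{density:result}, we obtain the main result,'' and you have simply filled in the density computation (via Chebotarev for the ray class group modulo $B\mathfrak{C}\fN$, and intersecting with the density-one ordinary set) that the paper leaves implicit. Your verification that $\a_n$ is coprime to the modulus and that only finitely many primes are removed by the auxiliary constraints is exactly the missing justification.
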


\section*{Acknowledgements}
\thispagestyle{empty}

This work was supported by the National Research Foundation of Korea (NRF) grant funded by the Korea government(MSIT)(No. 2019R1A2C108860913).
The author would like to thank his advisor Hae-Sang Sun for helpful discussions, comments, and suggesting the research topic. 
The author also would like to thank Ashay Burungale, Ming-Lun Hsieh, Chan-Ho Kim, Kenichi Namikawa and Chol Park and for helpful discussions and comments. 
The author plans to generalize the Hecke field generation results by Kim-Sun \cite{kim2017modular} and Sun \cite{sun2019generation} to the case of Bianchi modular forms.

\end{document}